\theoremstyle{theorem}
\renewcommand{\O}{\mathcal O}
\renewcommand{\to}[1][]{\xrightarrow{\ #1\ }}
\newcommand{\Tt}{{\mathfrak{T}}}
\begin{document}
\numberwithin{equation}{theorem}
\title[Test ideals of non-principal ideals]{Test ideals of non-principal ideals: Computations, Jumping Numbers, Alterations and Division Theorems}
\author{Karl Schwede}
\author{Kevin Tucker}
\begin{abstract}
Given an ideal $\ba \subseteq R$ in a (log) $\bQ$-Gorenstein $F$-finite ring of characteristic $p > 0$, we study and provide a new perspective on the test ideal $\tau(R, \ba^t)$ for a real number $t > 0$.  Generalizing a number of known results from the principal case, we show how to effectively compute the test ideal and also describe $\tau(R, \ba^t)$ using (regular) alterations with a formula analogous to that of multiplier ideals in characteristic zero.  We further prove that the $F$-jumping numbers of $\tau(R, \ba^t)$ as $t$ varies are rational and have no limit points, including the important case where $R$ is a formal power series ring. Additionally, we obtain a global division theorem for test ideals related to results of Ein and Lazarsfeld from characteristic zero, and also recover a new proof of Skoda's theorem for test ideals which directly mimics the proof for multiplier ideals.
\end{abstract}
\subjclass[2010]{14F18, 13A35, 14E15, 14J17, 14B05}
\keywords{Test ideals, blowup, jumping numbers, vanishing theorem, alteration, multiplier ideal, the Skoda complex, global division theorem}
\thanks{The first author was supported by NSF grant DMS \#1064485, NSF FRG grant DMS \#1265261 and NSF CAREER grant DMS \#1252860  and a Sloan Fellowship.}
\thanks{The second author was supported by NSF postdoctoral fellowship \#1004344 and NSF grant DMS \#1303077.}
\address{Department of Mathematics\\ The Pennsylvania State University\\ University Park, PA, 16802, USA}
\email{schwede@math.psu.edu}
\address{Department of Mathematics\\
University of Illinois at Chicago\\
322 Science and Engineering Offices (M/C 249)\\
851 S. Morgan Street\\
Chicago, IL 60607-7045, USA}
\email{kftucker@uic.edu}
\maketitle

\section{Introduction}

Suppose that $\ba$ is an ideal in a normal $\bQ$-Gorenstein domain $R$ essentially of finite type over a perfect field $k$.  When $k$ has characteristic zero, the multiplier ideals $\mathcal{J}(R, \ba^{t}) \subseteq R$ for real numbers $t \geq 0$ have been used to great effect (as in \cite[Chapter 9]{LazarsfeldPositivity2}). Similarly, when instead $k$ has characteristic $p > 0$, Hara and Yoshida introduced the test ideals $\tau(R, \ba^{t}) \subseteq R$ as positive characteristic analogs of multiplier ideals \cite{HaraYoshidaGeneralizationOfTightClosure} (\cf \cite{TakagiInterpretationOfMultiplierIdeals}).  In either case, these invariants measure both the singularities of $R$ and the subscheme defined by $\ba$ (relative to $t$). Moreover, Hara-Yoshida and Takagi showed that the multiplier ideal agrees with the test ideal after reduction to characteristic $p \gg 0$.  However, the connection between these invariants is far stronger than this result alone would suggest: test ideals and multiplier ideals often exhibit strikingly similar properties even in small characteristics.

This article develops a new perspective on test ideals of non-principal ideals with a number of applications.  For example, we further strengthen the association between multiplier and test ideals by generalizing the main results of \cite{BlickleSchwedeTuckerTestAlterations} and \cite{SchwedeTuckerZhang} from the case of a principal ideal to an arbitrary ideal.

\begin{theoremA*}[Test ideals via alterations, \autoref{thm.TestViaAlterations1}, \autoref{thm.TestViaAlterations2}]
\label{thmC.Alterations}
  Suppose that $R$ is a normal $\bQ$-Gorenstein domain essentially of finite type over a perfect field $k$, and $\ba$ is an ideal of $R$.  Then for all real numbers $t > 0$ and all sufficiently large regular alterations $\rho \colon W \to X = \Spec(R)$ with $\ba \cdot \O_{W} = \O_{W}(-H)$ locally principal (\textit{i.e.} those dominating a fixed alteration independent of $t$), we have
\[
\begin{array}{l@{=}l}
  \Tr_{\rho} \left( \rho_{*}\O_{W} (\lceil K_{W} - \rho^{*}K_{X} - tH \rceil ) \right) & \left\{
  \begin{array}{l}
    \mbox{the multiplier ideal $\mathcal{J}(R,\ba^{t})$ if $\Char(k) = 0$, and} \\
\mbox{the test ideal $\tau(R,\ba^{t})$ if $\Char(k)= p > 0$,}
  \end{array} \right.
\end{array}
\]
where $\Tr_{\rho} \colon \rho_{*} \omega_{W} \to \omega_{X}$ is the corresponding trace map.
\end{theoremA*}

While phrased in the above manner largely for comparison with characteristic zero, we remark that the new content of this result lies entirely in positive characteristic.  Furthermore, in this case, the above characterization of the test ideal extends naturally to all (log) $\bQ$-Gorenstein $F$-finite triples $(X, \Delta, \ba^{t})$:
there exists a regular alteration $\rho : W \to X$ with $\ba \cdot \O_W = \O_W(-H)$ such that
\begin{equation}
\label{eq.ConstantTauForSingleAlteration}
\Tr_{\rho} \big(\rho_* \O_W( \lceil K_W - \rho^*(K_X + \Delta) - tH \rceil)\big) = \tau(X, \Delta, \ba^t).
\end{equation}
for all $t \in \bR_{\geq 0}$, and it follows that the test ideal $\tau(X, \Delta, \ba^t)$ is the intersection of these images over all alterations $\rho : W \to X$,
\[
\tau(X, \Delta, \ba^t) = \bigcap_{\rho : W \to X} \Tr_{\rho} \big(\rho_* \O_W( \lceil K_W - \rho^*(K_X + \Delta) - tH \rceil)\big).
\]

Largely in contrast to the situation for multiplier ideals in characteristic zero, the test ideals of non-principal ideals have historically seemed far more complicated than test ideals of principal ideals.  So as to give an idea where the difficulties lie, recall that
the classical construction of the test ideal $\tau(R,\ba^{t})$ \cite{HaraYoshidaGeneralizationOfTightClosure,TakagiInterpretationOfMultiplierIdeals,BlickleMustataSmithDiscretenessAndRationalityOfFThresholds} is based upon a number of manipulations of ideals involving the Frobenius morphism or $p$th power map.
For example, the $p^e$th Frobenius or bracket power $\bb^{[p^e]}$ of an ideal $\bb \subseteq R$ is the expansion under the $e$-th iterate of Frobenius and is generated by the $p^e$th powers of elements of $\bb$.
In case $\bb$ is a principal ideal, the ordinary and Frobenius powers coincide $\bb^{[p^e]} = \bb^{p^e}$, simplifying the construction of $\tau(R,\ba^{t})$ when $\ba$ is principal.  However, in general the two powers $\bb^{[p^e]} \neq \bb^{p^e}$ are quite different, and subsequently the test ideal $\tau(R,\ba^{t})$ for non-principal $\ba$ is more mysterious.

In light of such difficulties, as with the results from \cite{BlickleSchwedeTuckerTestAlterations} and \cite{SchwedeTuckerZhang}  generalized above, a number of prominent results for test ideals have been known previously only in the principal case.  Our new perspective allows us to realize many of these statements in full generality, roughly showing that the test ideals of non-principal ideals behave nearly as well as those of principal ideals. As another example, consider that historically the test ideals of principal ideals have fared better with regards to $F$-jumping numbers.  Recall that a real number $t > 0$ is called an \emph{$F$-jumping number} if $\tau(R, \ba^{t - \varepsilon}) \neq \tau(R, \ba^t)$ for all $\varepsilon > 0$.  Based on the behavior of the similarly defined jumping numbers for multiplier ideals \cite{EinLazSmithVarJumpingCoeffs}, it was expected that $F$-jumping numbers are always rational and have no limit points (at least when $R$ is $\bQ$-Gorenstein, \cf \cite{UrbinatiDiscrepanciesOfNonQGorensteinVars}).  Indeed, this has been shown to be the case for arbitrary $\ba$ when $R$ is finite type over a field (for regular $R$ in \cite{BlickleMustataSmithDiscretenessAndRationalityOfFThresholds} and in general by \cite{BlickleSchwedeTakagiZhang,SchwedeTuckerZhang,BlickleTestIdealsViaAlgebras}). However, for more general rings, all previous proofs (again for regular $R$ in \cite{BlickleMustataSmithFThresholdsOfHypersurfaces,KatzmanLyubeznikZhangOnDiscretenessAndRationality} and in general by \cite{BlickleSchwedeTakagiZhang,SchwedeTuckerZhang}) have required $\ba$ to be principal.  This was particularly frustrating as the discreteness and rationality of $F$-jumping numbers for non-principal ideals in formal power series rings has remained elusive.  However, using our new characterization of the test ideal in the non-principal setting, we are able to prove discreteness and rationality in the $F$-finite (log) $\bQ$-Gorenstein case in full generality.

\begin{theoremB*}[Discreteness and rationality of $F$-jumping numbers, \autoref{thm.DiscAndRat}]
\label{thmB.discAndRat}
Suppose that $R$ is an $F$-finite normal domain, $\Delta$ is a $\bQ$-divisor on $X = \Spec(R)$ such that $K_X + \Delta$ is $\bQ$-Cartier, and $\ba \subseteq R$ is an ideal.  Then the set of $F$-jumping numbers of $\tau(X, \Delta, \ba^t)$ is a discrete set of rational numbers.
\end{theoremB*}
\noindent In particular, the $F$-jumping numbers of arbitrary ideals in formal power series rings are always discrete and rational.  This includes as a special case the rationality of the $F$-pure threshold, the positive characteristic analog of the log canonical threshold in characteristic zero.  It is interesting to note that, while Theorem A implies Theorem B a posteriori, in fact the discreteness and rationality of $F$-jumping numbers is a central ingredient in the proof of Theorem A.

The new perspective we are arguing for in this article can be summarized heuristically as follows. Before embarking upon Frobenius manipulations to compute the test ideal $\tau(R, \ba^t)$, one should first blowup the ideal $\ba$ (or alternately take a log resolution if available) so as to principalize $\ba$.  This in turn allows many of the methods from the principal case to go through at the technical expense of using relative cohomology vanishing theorems  -- in a manner reminiscent of their use for multiplier ideals in characteristic zero.
Roughly speaking, the proofs of Theorems A and B both proceed in this fashion.  However, they additionally require us to generalize effective tools for the computation of test ideals from the principal case to the non-principal case.


Recall from \cite{BlickleMustataSmithDiscretenessAndRationalityOfFThresholds} that if $R$ is regular, $\ba = \langle f \rangle$ is principal, and $t = a/p^e$, then $\tau(R, \ba^t) = (\ba^a)^{[1/p^e]}$ where $(\blank)^{[1/p^e]}$ indicates taking the image under the trace of the $e$-iterated Frobenius. The trace map can be described in this case via the Cartier isomorphism (see Section 2 for details).  The $(\blank)^{[1/p^e]}$-operation, alternately denoted $I_e(\blank)$ in \cite{KatzmanParameterTestIdealOfCMRings}, is highly computable and has been implemented in Macaulay2 by M.~Katzman (along with generalizations to not necessarily smooth ambient spaces using \cite{BlickleSchwedeTakagiZhang}).  However, the recipe $\tau(R, \ba^t) = (\ba^a)^{[1/p^e]}$ fails when $\ba$ is not principal, and the lack of a similarly effective description has been an obstruction for computing examples of $\tau(R, \ba^t)$ with non-principal $\ba$.  In order to prove Theorems A and B, we must first attempt to fill this gap in the following result.


\begin{theoremC*}[Effective computation of test ideals, \textnormal{\autoref{thm.ComputationOfTauSimple}}]
Suppose that $X$ is a Gorenstein\footnote{In fact, as stated, quasi-Gorenstein will suffice.} scheme, $\ba \subseteq \O_{X}$ is an ideal, and $t_0 \in \bQ_{>0}$ is a positive rational number. Set $\pi : Y \to X$ to be the normalized blowup of $Y$ with $\O_Y(-G) = \ba \cdot \O_Y$.  Let $\mathcal{N}$ denote the kernel of the natural map\footnote{Recall $\tau(\omega_Y, \Gamma)$ can be defined to be $\tau(Y, -K_Y + \Gamma)$; see Section~\ref{sec.TestVsParam} for further details.}
\[
F_* \big(\tau(\omega_Y, \pi^*K_{X}) \otimes \O_Y( (1-p)(\pi^*K_{X})) \big)
\to  \tau(\omega_Y, \pi^{*}K_{X} )
\]
induced by the trace map $F_* \omega_Y \to \omega_Y$.
Fix $e_1 \in \bZ_{>0}$ such that
\[
R^1 \pi_* (\mathcal{N}\tensor \O_Y(-f G)) = 0
\]
 for all $f \geq p^{e_1} t_0$ (possible since $-G$ is $\pi$-ample).
Then for any $t = a/p^b \in \bQ_{\geq t_{0}}$ and any $e \geq \max(e_1, b)$, we have
\[
\tau(X, \ba^t)= \tau(\omega_X, K_X, \ba^t) =  \Tr^e\Big( \pi_* F^e_* \big( \tau(\omega_Y, \pi^*K_{X}) \otimes \O_Y((1-p^e)\pi^*K_{X} - p^e tG) \big)\Big).
\]
\end{theoremC*}

\noindent
In practice, we expect that working values of $e_{1}$ can be detected via relative Castelnuovo-Mumford regularity \cite[Theorem 2]{OoishiCastelnuovoRegularityForGraded}.  Furthermore, we also obtain a more general result which allows for the effective computation of $\tau(X, \Delta, \ba^t)$ for all $t > 0$ and arbitrary triples $(X, \Delta, \ba)$ such that $K_X + \Delta$ is $\bQ$-Cartier in \autoref{thm.EffectiveTauComputationGeneral1}.  See \autoref{rem.Implementation} for some further discussion.

In addition to extending results from the principal case, our new perspective also leads to alternative proofs and entirely new statements for test ideals by allowing us to mimic previously unavailable arguments from characteristic zero.  Recall that, by combining a Koszul complex construction on a resolution together with the Kawamata-Viehweg vanishing theorem, one shows that multiplier ideals satisfy the Skoda-type relation $\mathcal{J}(R, \ba^{t}) = \ba \mathcal{J}(R,\ba^{t-1})$ for $t$ greater than the number of generators of $\ba$ (or even $\dim(R)$; see \cite[Section 9.6]{LazarsfeldPositivity2}).  Inspired by this result,  Hara and Takagi showed that the test ideal satisfies the analogous Skoda-type relation \cite[Theorem 4.1]{HaraTakagiOnAGeneralizationOfTestIdeals}. Hara and Takagi's relatively simple proof bears little resemblance to that from characteristic zero; more importantly, however, it is also comparatively weaker in the sense that it does not yield further global statements. Using our new perspective, in \autoref{prop.BasicSkoda} we give an alternative proof of Hara and Takagi's result which closely mimics the proof of Skoda's theorem for multiplier ideals.  Notably, we make use of the Koszul complex on the blowup but must replace the use of the Kawamata-Viehweg vanishing theorem in characteristic zero with asymptotic Fujita and Serre vanishing in positive characteristic.  Furthermore, this alternative line of argument then allows us to approach previously unknown statements for test ideals in the global setting.

 So as to state our result in detail, recall first that Ein and Lazarsfeld have used the Koszul complex construction for multiplier ideals mentioned above in the non-local setting to obtain the following global division theorem for sections of adjoint line bundles.


\begin{theorem*} \textnormal{(Global division theorem for multiplier ideals, \cite[Proposition 1.1(ii)]{EinLazEffectiveNullstellensatz} \cite[Theorem 9.6.31]{LazarsfeldPositivity2})}
Consider an ideal sheaf $\ba$ on a nonsingular $n$-dimensional projective variety $X$ over an algebraically closed field of characteristic zero.  Fix integral divisors $M$ and $L$ on $X$ such that $M - K_X$ is big and nef and $L$ has $r$ global sections
\[
s_{1}, \ldots, s_{r} \in H^{0}(X, \O_{X}(L) \tensor_{\O_{X}} \ba)
\]
generating $\O_{X}(L) \tensor_{\O_{X}} \bc$ for some reduction $\bc \subseteq \ba$ of $\ba$ (by replacing the $s_i$ with general linear combinations, we may assume that $r \leq n+1$).   Then for any $m \geq r$, any section
\[
s \in H^{0}(X, \O_{X}(M + mL) \tensor_{\O_{X}} \mJ(X, \ba^{m}))
\] can be expressed as a linear combination
\[
s = \sum h_{i}s_{i}
\]
with $h_{i} \in H^{0}(X, \O_{X}(M + (m-1)L) \tensor \mJ(X, \ba^{m-1}))$.
\end{theorem*}
\noindent In particular, the tensoring with the ideal $\mJ(X, \ba^m)$ can be viewed as a correction factor for which global sections can be pulled back via multiplication maps.  For example, if $\ba$ defines a smooth subvariety of codimension $d$, then $\mJ(X, \ba^m) = \ba^{m - d}$.  The proof Ein and Lazarsfeld's result heavily uses the Kawamata-Viehweg vanishing theorem \cite{KawamataVanishing,ViehwegVanishingTheorems}.

Another way to interpret the subspace $$H^{0}(X, \O_{X}(M + mL) \tensor_{\O_{X}} \mJ(X, \ba^{m})) \subseteq H^0(X, \O_X(M + mL))$$ is as follows.  If $\pi : Y \to X$ is a log resolution of $\ba$ with $\ba \cdot \O_Y = \O_Y(-G)$, then
\[
H^{0}(X, \O_{X}(M + mL) \tensor_{\O_{X}} \mJ(X, \ba^{m}))
\]
can simply be identified with the image of the (Grothendieck-)trace map
\[
H^0\big(Y, \O_Y(K_Y + m \pi^* L + \pi^* (M - K_X) - mG)\big) \xrightarrow{\Tr_{\pi}} H^0\big(X, \O_X(K_X + mL + M-K_X)\big).
\]

Motivated by this observation, our last main result is to obtain a version of Ein and Lazarsfeld's global division theorem in characteristic $p > 0$.  Firstly, we replace the multiplier ideal by the test ideal. Unfortunately, even with this replacement, the aforementioned vanishing theorems are false, and so we must correct not only for the \emph{local} singularities of $V(\ba)$ using the test ideal, but also for the potential \emph{global} failure of Kawamata-Viehweg vanishing.  We do this by further restricting our sections to take this into account, the new set of sections is denoted\footnote{In a previous version of this preprint, these sections were denoted by $P^0_+$.} by $P^0$.  First however, we state our theorem:


\begin{theoremD*} [\autoref{thm.MainResult}, \autoref{thm.BasicMainTheorem}]
Suppose that $X$ is a normal $n$-dimensional projective variety over an algebraically closed field of characteristic $p > 0$, and $\Delta \geq 0$ is a $\bQ$-divisor such that $K_X + \Delta$ is $\bQ$-Cartier.  Suppose that $\ba \subseteq \O_X$ is an ideal sheaf and $L$ is a Cartier divisor such that $\O_X(L) \tensor \bc$ is globally generated by sections $s_1, \dots, s_r \in \Gamma(X, \O_X(L) \tensor \bc)$ for some reduction $\bc \subseteq \ba$ of $\ba$ (by replacing the $s_i$ with general linear combinations, we may assume that $r \leq n+1$).  Fix $M$ a Cartier divisor on $X$ such that $M - K_X - \Delta$ is nef and big, and fix a positive integer $m \geq r$.  Then any section
\[
s \in P^0\Big( X, \O_X(M + m L) \tensor \tau(X, \Delta, \ba^m) \Big) \subseteq H^0(X, \O_X(M + mL ))
\]
can be expressed as a linear combination
\[
s = \sum h_i s_i
\]
with $h_i \in P^0\Big( X, \O_X(M + (m-1)L) \tensor \tau(X, \Delta, \ba^{m-1} )\Big)$.
\end{theoremD*}

In particular, instead of merely considering  $H^{0}(X, \O_{X}(M + mL) \tensor_{\O_{X}} \tau(X, \Delta, \ba^{m}))$, we instead consider a subspace
\[
P^{0}(X, \O_{X}(M+mL) \tensor_{\O_{X}} \tau(X, \Delta, \ba^{m})) \subseteq H^0(X, \O_X(M + mL)).
\]
This is the subspace obtained similarly to the multiplier ideal.  Indeed, suppose that $\pi : Y \to X$ is now the normalized blow-up of $\ba$ (or any further blow-up).  We define $P^{0}(X, \O_{X}(M+mL) \tensor_{\O_{X}} \tau(X, \ba^{m}))$ to be the sum of images of the (Grothendieck-)trace maps
\[
\begin{array}{rl}
& H^0\big(Y, \O_Y(K_Y + m p^e \pi^* L + \pi^* p^e (M - K_X - \Delta) - mp^e G - D)\big)\\
 \xrightarrow{\Tr_{F^e \circ \pi}} & H^0\big(X, \O_X(K_ X+ M - K_X + mL )\big).
 \end{array}
\]
for $e \gg 0$.
Here $F^e$ is simply the $e$-iterated Frobenius and $D$ is a sufficiently large effective divisor on $Y$.  While $D$ is large, when $e \gg 0$ the contribution of $-D$ is almost negligible compared to the other divisors (which are multiplied by $p^e$).

The first reasonable question one might ask is why there might be any sections of this form at all.  However, very recently there have a number of results proving that sections roughly of the form $P^0$ are abundant in characteristic $p > 0$.  For example, in \cite{SchwedeACanonicalLinearSystem} it was shown that under certain circumstances, these sections globally generate test ideal sheaves (precursors to this result were obtained in \cite{SmithFujitaFreenessForVeryAmple}, \cite{HaraACharacteristicPAnalogOfMultiplierIdealsAndApplications}, \cite{KeelerFujita}).  In \cite{SchwedeACanonicalLinearSystem}, it was also shown that these sections lift from subvarieties to ambient varieties via adjunction in some cases.  In \cite{MustataNonNefLocusPositiveChar}, it was shown that these sections can even be used to globally generate test ideals associated to linear series, analogous to \cite[Corollary 11.2.13]{LazarsfeldPositivity2}.  We briefly mention some other recent results showing that these special sections appear frequently.  In \cite{MustataSchwedeSeshadri}, it was shown that these sections can be detected by the Seshadri constant or an even finer positive characteristic analog of the Seshadri constant.  Additional applications of these sections can be found in \cite{CasciniHaconMustataSchwedeNumDimPseudoEffective,PatakfalviSemiPositivityInPosChar,HaconXuThreeDimensionalMMP}.  In the final section of this paper, we also explore the ubiquity of these sections in the case of curves, also see \cite{TanakaTraceMapExtendingSections}.  Theorem D can then be viewed as another piece in this puzzle, showing that these sections can be lifted via multiplication maps.

There is one key difference from our statement and Ein and Lazarsfeld's statement in \cite[Theorem 9.6.31]{LazarsfeldPositivity2} however.  There, if you have a section $s \in H^{0}(X, \O_{X}(M+mL))$ that vanishes to a sufficiently high degree along $\ba$, then $s$ is automatically contained in $H^{0}\big(X, \O_{X}(M+mL) \tensor_{\O_{X}} \mJ(X, \ba^{m})\big)$ (this follows from basic properties of multiplier ideals $\mJ(X, \ba^{m})$).  Unfortunately, we also have global arithmetic considerations as well and we do not see how to obtain the same result.  In particular, this restriction seems to prevent us from obtaining a global effective Nullstellensatz in characteristic $p > 0$, which was the main result of \cite{EinLazEffectiveNullstellensatz}.  Of course, in the affine setting, the effective Nullstellensatz is already known, even in characteristic $p > 0$ \cite{BrownawellBoundsForDegrees, KollarSharpEffectiveNullstellensatz}.

Again, we emphasize that one of the most interesting features of this last result is its proof.  In particular, our proof almost exactly mimics the global division theorem proof from characteristic $0$ in that we study a Skoda complex on a blowup $\pi : Y\to X$, push it down, and use vanishing statements to obtain the requisite surjectivity.  Similarly, in the future, we hope that the perspective and methods of this article will allow other characteristic zero arguments for multiplier ideals to yield new statements for test ideals in characteristic $p > 0$.


\vskip 9pt
\noindent{\it Acknowledgements: } The authors would like to thank Marc Chardin, Christopher Hacon Markus Lange-Hegermann, Mircea \mustata{} and Wenliang Zhang  for valuable discussions.  We thank the referee for numerous very helpful comments on a previous draft of this manuscript.  We also thank Wenliang Zhang for comments on a previous draft of this paper.  The particular construction of the special sections $P^0$ done by twisting by $-D$ in order to obtain asymptotic vanishing via Frobenius was first studied in this context in an unpublished work of C.~Hacon and the first author.

\section{Sketch of main ideas in simple cases}
\label{sec.SmoothVarieties}

In this section, we will give an overview of the main ideas of this paper for an ideal pair on a smooth ambient variety.  This simplified setting is both interesting in its own right, and renders transparent the essential ideas of the more general arguments.

We start by reviewing and generalizing a simple description of test ideals in this setting from \cite{BlickleMustataSmithDiscretenessAndRationalityOfFThresholds}.  We note that the definition of the test ideal we give here requires the ambient space to be smooth, see \autoref{sec.DescriptionOfTau} for the general case.  Suppose $X$ is a smooth $n$-dimensional variety over an algebraically closed field of characteristic $p > 0$.  Let $\omega_{X}$ denote the sheaf of $n$-forms on $X$, and denote by $F \: X \to X$ the (absolute) Frobenius morphism determined by taking regular functions to their $p$-th powers.

The key tool we will need is the \emph{trace} map $\Tr = \Tr_{X} \: F_{*} \omega _{X} \to \omega_{X}$.  This is a surjective map that can be described as a trace map for duality with respect to $F$, or equivalently as the map on $n$-forms via the Cartier isomorphism.  Given algebraic coordinates $x_{1}, \ldots, x_{n}$ on an open subset of $X$, the trace map is characterized by
\[
\Tr (F_{*} x_{1}^{i_{1}} \cdots x_{n}^{i_{n}} \, d x_{1} \wedge \cdots \wedge d x_{n}) =\left\{
\begin{array}{c}0 \mbox{ if } i_{j} \not\equiv -1 \mod (p) \mbox{ for some } j \medskip \\ x_{1}^{\frac{i_{1}-p+1}{p}} \cdots x_{n}^{\frac{i_{n}-p+1}{p}} \, d x_{1} \wedge \cdots \wedge d x_{n} \mbox{ otherwise } \end{array} \right. ,
\]
see \cite[Section 1.3]{BrionKumarFrobeniusSplitting}.  Iterating this map $e$-times (pushing forward via Frobenius as needed), we obtain a trace map $\Tr^{e} \: F^{e}_{*} \omega_{X} \to \omega_{X}$.

If $\ba \subseteq \O_{X}$ is an ideal sheaf one can use the surjectivity of trace to show that the images
\[
\Tr^{e} (F^{e}_{*}(\ba^{ p^{e} } \omega_{X} )) \subseteq \Tr^{e+1} (F^{e+1}_{*} (\ba^{p^{e+1} } \omega_{X}))
\]
are increasing subsheaves of $\omega_{X}$, and hence must stabilize for $e \gg 0$ by the Noetherian property.  As $\omega_{X}$ is invertible, there is an ideal $\tau(X, \ba)$ called the \emph{test ideal of $\ba$} such that
\[
\Tr^{e} (F^{e}_{*}(\ba^{p^{e} } \omega_{X} )) = \tau(X,\ba) \omega_{X}
\]
for all sufficiently large $e \gg 0$.  Alternately, the test ideal $\tau(X, \ba)$ can also be described as the smallest nonzero ideal $J \subseteq \O_X$ such that
\begin{equation}
\label{eq.EasySmallestTau}
\Tr^b(F^b_* (\ba^{(p^b- 1)} J)) \subseteq J
\end{equation}
for all $b \geq 0$. We now give a slight variation on the former description.

\begin{proposition}
\label{prop:alternatedescriptionsmoothspace}
Suppose that $\ba$ is an ideal sheaf on a smooth algebraic variety $X$, $\pi \: Y \to X$ any proper birational morphism such that $Y$ is normal and $\ba \O_{Y} = \O_Y(-G)$ is the locally principal ideal sheaf of an effective Cartier divisor $G$, and $D$ an effective Cartier divisor on $Y$ such that $D \geq K_{Y} - \pi^*(K_X) + (d+1)G$ where $d = \dim X$.  Then
\begin{equation}
\label{eq.EasyAscendingTestIdeal}
\Tr^{e}( F^{e}_{*} \pi_{*} \O_{Y}( K_{Y} - p^{e} G - D ) ) = \tau(X, \ba) \omega_{X}
\end{equation}
for all $e \gg 0$.
\end{proposition}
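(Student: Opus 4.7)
The plan is to reduce the claimed identity to the standard formula $\tau(X,\ba)\omega_{X} = \Tr^{e}(F^{e}_{*}(\ba^{p^{e}}\omega_{X}))$ for $e \gg 0$, and then to identify the stabilized image via the smallest-stable-ideal characterization of $\tau(X,\ba)$ recalled in \eqref{eq.EasySmallestTau}. The role of the $(d+1)G$ buffer in the hypothesis on $D$ will be to provide precisely enough slack that the various divisorial inclusions and Frobenius-bracket containments go through.

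First I would use the projection formula to rewrite the left-hand side. Writing $D = (K_{Y} - \pi^{*}K_{X}) + (d+1)G + D'$ with $D' \geq 0$ effective, one has $K_{Y} - p^{e}G - D = \pi^{*}K_{X} - (p^{e}+d+1)G - D'$, so since $\omega_{X}$ is invertible,
\[
\pi_{*}\O_{Y}(K_{Y} - p^{e}G - D) = \omega_{X} \otimes \bb_{e}, \qquad \bb_{e} := \pi_{*}\O_{Y}(-(p^{e}+d+1)G - D').
\]
The goal becomes $\Tr^{e}(F^{e}_{*}(\bb_{e}\omega_{X})) = \tau(X,\ba)\omega_{X}$ for $e \gg 0$. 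For the upper bound, I would note that $\bb_{e} \subseteq \pi_{*}\O_{Y}(-(p^{e}+d+1)G) \subseteq \overline{\ba^{p^{e}+d+1}}$, so the Brian\c{c}on--Skoda theorem in the regular ring $\O_{X}$ yields $\bb_{e} \subseteq \ba^{p^{e}}$; applying $\Tr^{e}\circ F^{e}_{*}$ then gives the inclusion $\Tr^{e}(F^{e}_{*}(\bb_{e}\omega_{X})) \subseteq \tau(X,\ba)\omega_{X}$.

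For the reverse inclusion, I would establish that the image sequence is monotone and then invoke \eqref{eq.EasySmallestTau}. The key local calculation is $\bb_{e}^{[p]} \subseteq \bb_{e+1}$: the $p$-th power of a local section of $\O_{Y}(-(p^{e}+d+1)G - D')$ lies in $\O_{Y}(-p(p^{e}+d+1)G - pD')$, which is contained in $\O_{Y}(-(p^{e+1}+d+1)G - D')$ precisely because $p(d+1) \geq d+1$ and $pD' \geq D'$. Using the $\O_{X}$-linearity of the trace together with surjectivity of $\Tr \: F_{*}\omega_{X} \twoheadrightarrow \omega_{X}$,
\[
\bb_{e}\omega_{X} = \Tr(\bb_{e}\cdot F_{*}\omega_{X}) = \Tr(F_{*}(\bb_{e}^{[p]}\omega_{X})) \subseteq \Tr(F_{*}(\bb_{e+1}\omega_{X})),
\]
so applying $\Tr^{e}(F^{e}_{*}(-))$ yields the increasing chain $\Tr^{e}(F^{e}_{*}(\bb_{e}\omega_{X})) \subseteq \Tr^{e+1}(F^{e+1}_{*}(\bb_{e+1}\omega_{X}))$. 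By the noetherianity of $\omega_{X}$ this chain stabilizes at some $N \subseteq \omega_{X}$.

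Finally, the analogous pullback inclusion $\ba^{(p^{b}-1)p^{e}}\bb_{e} \subseteq \bb_{b+e}$ (a divisor computation on $Y$ using the buffer) combined with the $\O_{X}$-linearity of $\Tr^{e}\circ F^{e}_{*}$ gives
\[
\Tr^{b}(F^{b}_{*}(\ba^{p^{b}-1}\cdot \Tr^{e}(F^{e}_{*}(\bb_{e}\omega_{X})))) = \Tr^{b+e}(F^{b+e}_{*}(\ba^{(p^{b}-1)p^{e}}\bb_{e}\omega_{X})) \subseteq \Tr^{b+e}(F^{b+e}_{*}(\bb_{b+e}\omega_{X})),
\]
so passing to the limit yields $\Tr^{b}(F^{b}_{*}(\ba^{p^{b}-1}N\omega_{X}^{-1})) \subseteq N\omega_{X}^{-1}$ for every $b \geq 0$. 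The ideal $N\omega_{X}^{-1}$ is nonzero, since $\pi$ is an isomorphism over the dense open locus where $G$ and $D'$ vanish, so $\bb_{e}$ is a nonzero ideal on $X$. By the characterization \eqref{eq.EasySmallestTau}, $N\omega_{X}^{-1} \supseteq \tau(X,\ba)$, which together with the upper bound yields equality for all $e$ past the stabilization point. The main obstacle I anticipate is the careful bookkeeping of Frobenius-bracket versus ordinary ideal powers and verifying that the $(d+1)G$ buffer is exactly what makes both $\bb_{e}^{[p]} \subseteq \bb_{e+1}$ and $\ba^{(p^{b}-1)p^{e}}\bb_{e} \subseteq \bb_{b+e}$ go through simultaneously.
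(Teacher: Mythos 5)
Your proof is correct and follows essentially the same approach as the paper's: establish the ascending chain via $\bb_{e}^{[p]} \subseteq \bb_{e+1}$, get the upper bound from Brian\c{c}on--Skoda, and get the lower bound from the smallest-stable-ideal characterization in \eqref{eq.EasySmallestTau} via the containment $\ba^{(p^{b}-1)p^{e}}\bb_{e} \subseteq \bb_{b+e}$. The only cosmetic difference is that the paper first reduces to the case $X$ affine with $K_{X} = 0$, whereas you carry $\omega_{X}$ along explicitly via the projection formula; the substance is identical.
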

\begin{proof}
We may assume that $X$ is affine and that $K_X = 0$.  For simplicity, we set $\bb_e := \pi_{*} \O_{Y}( K_{Y} - p^{e} G - D )$ which is an actual ideal sheaf since $K_X = 0$ and $K_Y \geq 0$ is exceptional.  We first claim that the images from \autoref{eq.EasyAscendingTestIdeal} ascend as $e$ increases and so eventually stabilize.  It is sufficient to show that $\bb_{e} \subseteq \Tr(F_* \bb_{e+1})$ or in other words that $\bb_{e}^{[p]} \subseteq \bb_{e+1}$, \cf \cite[Lemma 1.6]{FedderFPureRat}.  Let $f \in \bb_{e}$, so that $\Div_Y(f) + K_Y - p^e G - D \geq 0$.  Thus $f^p \in \bb_e^{[p]}$ and $\Div_Y(f^p) + pK_Y - p^{e+1} G - pD \geq 0$.  On the other hand $pK_Y - p^{e+1}G - pD \leq K_Y - p^{e+1}G - D$ since $D \geq K_Y$ and the claim is proven.

We now show that $\bb_e \subseteq \ba^{p^e}$ which will gives us the containment $\subseteq$ based on the description of the test ideal above.  Now, certainly
\[
\bb_e = \pi_{*} \O_{Y}( K_{Y} - p^{e} G - D ) \subseteq \pi_* \O_Y(-p^e G - (d+1)G) = \overline{\ba^{p^e + d+1 }}
\]
from our choice of $D$.  But by the Brian{\c c}on-Skoda theorem \cite{LipmanTeissierPseudoRational,LipmanSathaye,HochsterHunekeTC1}, we know $\overline{\ba^{p^e + d+1 }} \subseteq \ba^{p^e}$.

For the reverse inclusion we use the characterization of the test ideal given in \autoref{eq.EasySmallestTau}.
Fix $e \gg 0$ and then notice that
\[
\begin{array}{rcl}
 \Tr^b\Big(F^b_* \big(\ba^{(p^b - 1)} \Tr^{e}( F^{e}_{*} \bb_e )\big)\Big) &
\subseteq & \Tr^{b+e}\Big(F^{b+e}_* \pi_* \O_Y(K_Y - (p^e + p^e(p^b - 1))G - D)\Big)\\
&\subseteq & \Tr^{b+e}\Big(F^{b+e}_* \pi_* \O_Y(K_Y - p^{e+b} G - D)\Big)\\
&= & \Tr^{b+e}\big(F^{b+e}_* \bb_{b+e}\big)\\
&= & \Tr^{e}\big(F^{e}_* \bb_{e} \big).
\end{array}
\]
\end{proof}

The advantage of this generalization is that it naturally allows one to make use of certain cohomology vanishing theorems.  In the notation of the Proposition, we are in fact free to take the divisor $D$ to be arbitrarily large; in particular, when $\pi$ is projective, we may take $-D$ with as much relative positivity as desired so as to apply vanishing theorems.  For instance, if $\pi$ is the normalized blowup, this can easily be achieved by using that $-G$ itself is relatively ample.  It is precisely these vanishings that lead to our main results.  Note also that we have chosen a $D$ above that makes the proof easy, and likely not the smallest $D$ one could choose.

In order to obtain effective computation of test ideals via the above characterization, it is essential to bound the $e \gg 0$ required for stabilization.  Roughly speaking, the next result does this where $\pi : Y \to X$ is the normalized blowup of $\ba$.  Along the way, we make use of the parameter test sheaf $\tau(\omega_Y)$ \cite{SmithTestIdeals,BlickleSchwedeTuckerTestAlterations} which replaces both $D$ and $K_Y$ in the above description.
Recall that since $Y$ is normal and $\omega_Y$ is reflexive, the map $\Tr : F_* \omega_U \to \omega_U$ on the smooth locus $U \subseteq Y$ extends to all of $Y$.  We then have that $\tau(\omega_Y)$ is by definition the smallest non-zero subsheaf $J \subseteq \omega_Y$ such that $\Tr(F_* J) \subseteq J$.  In fact, it thus follows that $\Tr(F_* \tau(\omega_Y)) = \tau(\omega_Y)$ since, if it was not surjective, the image would be a smaller ideal satisfying the same condition.  Similarly, for any effective $D$ on $Y$ and all $d \gg 0$, one has $\Tr^d(F_*^d \tau(\omega_Y)(-D)) = \tau(\omega_Y)$ (for example, see \cite[Proposition 2.2(4)]{SchwedeTuckerZhang}).
\begin{theorem}[Effective computation of test ideals]
\label{thm.EasyEffectiveComputation}
With notation as above, set $\mathcal{N}$ to be the kernel of $\Tr : F_* \tau(\omega_Y) \to \tau(\omega_Y)$.  Fix $e > 0$ such that
\begin{equation}
\label{eq.EasyVanishingEffectiveComputation}
R^1 \pi_* \big(\mathcal{N}\tensor \O_Y(-dG)\big) = 0
\end{equation}
for all $d \geq p^{e}$ (which is possible since $-G$ is $\pi$-ample).  Then
\[
\tau(X, \ba) \omega_X = \Tr^e\big(F^e_* \pi_*  (\tau(\omega_Y)\otimes\O_Y( - p^e G))\big).
\]
\end{theorem}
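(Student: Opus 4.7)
The plan is to proceed in two steps: first, I will show that the image $J_e := \Tr^e(F^e_* \pi_*(\tau(\omega_Y) \otimes \O_Y(-p^eG)))$ stabilizes once $e \geq e_1$, and second, I will identify its stable value with $\tau(X,\ba)\omega_X$ by sandwiching $J_e$ between two quantities each computable via \autoref{prop:alternatedescriptionsmoothspace}.

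For the stabilization, I would twist the defining sequence $0 \to \mathcal{N} \to F_*\tau(\omega_Y) \to \tau(\omega_Y) \to 0$ by $\O_Y(-p^eG)$ and apply the projection formula to rewrite the middle term as $F_*(\tau(\omega_Y)(-p^{e+1}G))$. Pushing forward by $\pi$ and invoking the vanishing $R^1\pi_*\mathcal{N}(-p^eG) = 0$ (valid for $e \geq e_1$) turns the trace into a surjection $F_*\pi_*(\tau(\omega_Y)(-p^{e+1}G)) \twoheadrightarrow \pi_*(\tau(\omega_Y)(-p^eG))$. Combined with the factorization $\Tr^{e+1} = \Tr^e \circ F^e_*(\Tr)$, this yields $J_{e+1} = J_e$ for every $e \geq e_1$.

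For the identification, we may work locally, assuming $X$ is affine with $K_X = 0$. Since $\omega_Y/\tau(\omega_Y)$ is coherent and torsion, there is an effective Cartier divisor $D_1$ on $Y$ with $\omega_Y(-D_1) \subseteq \tau(\omega_Y)$. Pick $D$ effective with $D \geq \max(D_1, K_Y + (d+1)G)$. Then the inclusion $\omega_Y(-p^eG-D) \subseteq \tau(\omega_Y)(-p^eG)$ combined with \autoref{prop:alternatedescriptionsmoothspace} yields $\tau(X,\ba)\omega_X = \Tr^e(F^e_*\pi_*\omega_Y(-p^eG-D)) \subseteq J_e$ for $e \gg 0$, establishing $\supseteq$. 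For the reverse containment, I would exploit the parameter test sheaf identity $\tau(\omega_Y) = \Tr^m(F^m_*(\tau(\omega_Y)(-D)))$ valid for $m \gg 0$: twisting by $\O_Y(-p^{e+m}G)$ via the projection formula gives $\tau(\omega_Y)(-p^{e+m}G) = \Tr^m(F^m_*(\tau(\omega_Y)(-D - p^{e+2m}G)))$. Pushing forward by $\pi$ and composing with $\Tr^{e+m}(F^{e+m}_* \cdot)$ (using trace factorization) rewrites $J_{e+m}$ as $\Tr^{e+2m}(F^{e+2m}_* \pi_*(\tau(\omega_Y)(-D-p^{e+2m}G)))$. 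The containment $\tau(\omega_Y) \subseteq \omega_Y$ then bounds this by $\Tr^{e+2m}(F^{e+2m}_*\pi_*\omega_Y(-D-p^{e+2m}G)) = \tau(X,\ba)\omega_X$ (again by \autoref{prop:alternatedescriptionsmoothspace}), and the stability from Step~1 transports the containment back to $J_e$.

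The main obstacle I expect is the compatibility of $\pi_*$ with the iterated trace $\Tr^m_Y$ used in the substitution step: specifically, that $\pi_*$ of the surjection $\Tr^m_Y$ remains a surjection (and agrees with $\Tr^m_X$ under the canonical identification $\pi_* F^m_* = F^m_* \pi_*$). This requires $R^1\pi_*(\ker \Tr^m_Y) = 0$, which I would handle by filtering $\ker \Tr^m_Y$ through successive twists of the form $F^i_*\mathcal{N}(-p^{e+m+i}G)$ arising from the recursion $\Tr^m_Y = \Tr_Y \circ F_*\Tr^{m-1}_Y$, applying $R^1\pi_* F^i_*(-) = F^i_* R^1\pi_*(-)$, and noting each graded piece satisfies the vanishing hypothesis because $p^{e+m+i} \geq p^{e_1}$ when $e \geq e_1$.
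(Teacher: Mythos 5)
Your overall strategy is sound and is essentially the paper's argument with the two halves reordered: you first establish stability $J_{e} = J_{e+1}$ for $e \geq e_1$ using the $\mathcal{N}$-vanishing hypothesis, then identify the stable value by sandwiching, whereas the paper first identifies the value for $e \gg 0$ (via a single surjectivity statement rather than two containments) and then invokes the $\mathcal{N}$-vanishing to descend to $e = e_1$. The stability step and the forward containment ($\tau(X,\ba)\omega_X \subseteq J_e$ via $\omega_Y(-D) \subseteq \tau(\omega_Y)$) are both correct.

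The gap is in your final paragraph. The map whose kernel you must control is
\[
\Tr^m \colon F^m_*\bigl(\tau(\omega_Y)(-D - p^{e+2m}G)\bigr) \longrightarrow \tau(\omega_Y)(-p^{e+m}G),
\]
and its kernel does \emph{not} filter through the pieces $F^i_*\mathcal{N}(-p^{e+m+i}G)$. That filtration is only valid for the map \emph{without} the $-D$ twist on the source, i.e.\ for $\Tr^m \colon F^m_*\bigl(\tau(\omega_Y)(-p^{e+2m}G)\bigr) \to \tau(\omega_Y)(-p^{e+m}G)$, because $-p^{e+2m}G = p^m\cdot(-p^{e+m}G)$ commutes with the projection formula at every stage of the recursion $\Tr^m = \Tr \circ F_*\Tr^{m-1}$, whereas the fixed effective divisor $-D$ is not a $p^i$-th multiple for $i > 0$ and so cannot be carried along. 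The kernel of the $-D$-twisted map is a proper subsheaf of the kernel of the untwisted one, and vanishing of $R^1\pi_*$ on the ambient sheaf gives no control over $R^1\pi_*$ of a subsheaf. (Moreover you cannot simply drop $-D$: without it the identity $\tau(\omega_Y) = \Tr^m\bigl(F^m_*\tau(\omega_Y)\bigr)$ is vacuous, and the substitution step only reproduces $J_{e+m} = J_{e+2m}$, giving nothing toward the reverse containment.)

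The correct and simpler fix is exactly what the paper does. By the projection formula, $F^m_*\bigl(\tau(\omega_Y)(-D - p^{e+2m}G)\bigr) \cong F^m_*\bigl(\tau(\omega_Y)(-D)\bigr)(-p^{e+m}G)$, so the kernel in question is $\mathcal{K}(-p^{e+m}G)$ where $\mathcal{K}$ is the fixed coherent kernel of $\Tr^m \colon F^m_*\bigl(\tau(\omega_Y)(-D)\bigr) \to \tau(\omega_Y)$; since $-G$ is $\pi$-ample, relative Serre vanishing gives $R^1\pi_*\bigl(\mathcal{K}(-p^{e+m}G)\bigr) = 0$ for $e \gg 0$. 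This only yields the reverse containment asymptotically, but that is enough, since your stability step already carries the conclusion from $e \gg 0$ back to any $e \geq e_1$.
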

\begin{proof}
First, we may clearly assume $X$ is affine and that $K_X = 0$.
Fix $D$ sufficiently large and Cartier such that $\Tr^{e}( F^{e}_{*} \pi_{*} \O_{Y}( K_{Y} - p^{e} G - D ) ) = \tau(X, \ba)$ for $e \gg 0$.
By making $D$ bigger if necessary, we can assume $\O_Y(K_Y-D) \subseteq \tau(\omega_Y)$.  We also know that
\[
\Tr^{d}\big(F^d_* (\tau(\omega_Y) \otimes \O_Y(- D))\big) \subseteq \Tr^{d}\big(F^d_* ( \omega_Y \otimes \O_Y(- D))\big) \subseteq \tau(\omega_Y)
\]
by the definition of $\tau(\omega_Y)$ for any $d > 0$.

Let us first show how to get rid of $K_{Y}$ and $D$ at the expense of incorporating $\tau(\omega_{Y})$ in our description of the test ideal $\tau(X,\ba)$ -- but without worrying about bounding $e > 0$.
As mentioned immediately preceding the statement of the theorem, we may fix $d \gg 0$ such that
\[
F^d_* (\tau(\omega_Y) \otimes \O_Y(- D)) \xrightarrow{\Tr^d} \tau(\omega_Y)
\]
is surjective.  If $\mathcal{K}$ denotes the kernel of this map, then $R^1 \pi_* \big(\mathcal{K} \tensor \O_Y(-p^e G)\big) = 0$ for $e \gg 0$ by relative Serre vanishing, and so the composition
\[
F^d_*\pi_*  (\tau(\omega_Y) \otimes \O_Y(- D -p^{e+d} G)) \xrightarrow{\subseteq} F^d_* \pi_* (\omega_Y(- D - p^{e+d} G)) \xrightarrow{\Tr^d} \pi_* (\tau(\omega_Y)(-p^e G))
\]
is surjective.  Thus, applying $F^{e}_{*}(\blank)$ and $\Tr^{e}(\blank)$, we may conclude that
\[
\Tr^{e+d} \left( F^{e+d}_* \pi_* (\omega_Y(- D - p^{e+d} G))\right) = \Tr^{e}\left( F^{e}_{*}\pi_* (\tau(\omega_Y)(-p^e G))\right)
\]
 and
  it immediately follows that $\Tr^e \big( F^e_* \pi_* (\tau(\omega_Y)\otimes\O_Y(-p^e G) )\big) = \tau(X, \ba) \omega_X$ for $e \gg 0$ by \autoref{prop:alternatedescriptionsmoothspace}.

At this point, all that remains is to bound $e$. To that end, note that for the $e$ defined in the statement of the theorem, the vanishing \autoref{eq.EasyVanishingEffectiveComputation} implies that
\[
\begin{array}{rcl}
 \Tr^e \big( F^e_* \pi_* (\tau(\omega_Y)\otimes \O_Y(-p^e G)) \big) &
= & \Tr^{e+1} \big( F^{e+1}_* \pi_* (\tau(\omega_Y)\otimes \O_Y(-p^{e+1} G)) \big)\\
&= & \Tr^{e+2} \big( F^{e+2}_* \pi_* (\tau(\omega_Y)\otimes \O_Y(-p^{e+2} G)) \big) \\
&= & \ldots
\end{array}
\]
and so the theorem now follows.
\end{proof}

In the simple case under consideration in this section (for an ideal sheaf $\ba$ on a smooth variety $X$), Theorem A can readily be seen to follow from the above result.  In this situation (ignoring the scaling coefficient $t$), Theorem A asserts that there exists an alteration $\eta : W \to X$ with $\O_W(-H) = \ba \cdot \O_W$ such that $\tau(X, \ba) \omega_X = \Tr_{\eta}\big(\eta_* \O_W(K_W - H) \big)$.  This is direct from the above theorem if one picks $\gamma : W \to Y$ such that $\Tr_{\gamma}(\gamma_* \O_W(K_W)) = \tau(\omega_Y)$, whose existence is guaranteed by \cite{BlickleSchwedeTuckerTestAlterations}, and then sets $\eta = \pi \circ \gamma$.  Similarly, after incorporating the scaling coefficient $t$ into the above effective computation result in \autoref{thm.EffectiveTauComputationGeneral1}, we will be able to prove Theorem B on the discreteness and rationality of $F$-jumping numbers by using the above characterization to reduce to the principal case on the normalized blowup.


Let us now briefly sketch the main idea in Theorem B in our setting.  The technical core is \autoref{lem.NoAccumulationFromBelow} which shows that if $t = a/b \in \Q$ where $b$ is relatively prime to $p$, then $\tau(R, \ba^s)$ is constant for $s \in (t - \varepsilon, t)$ and $1 \gg \varepsilon > 0$ (compare with \cite{BlickleMustataSmithFThresholdsOfHypersurfaces,KatzmanLyubeznikZhangOnDiscretenessAndRationality}).  We already know the result in the principal case, which gives that $\tau(\omega_Y, (t - \varepsilon)G)$ is constant for $1 \gg \varepsilon > 0$ where $\pi : Y \to X$ is the normalized blowup and $\ba \cdot \O_Y = \O_Y(-G)$.  Using the effective computation of test ideals above and the formula
\begin{equation}
\label{eq.SimpleFormulaTrTau}
\Tr\big(F_* \tau(X, \ba^t) \omega_X\big) = \tau(X, \ba^{t/p}) \omega_X
\end{equation}
 it is easy to see that
\begin{equation}
\label{eq.EasyImageGeneralCoefficient}
\Tr^e F^e_* \pi_* \big( \tau(\omega_Y) \tensor \O_Y(-t(p^e - 1)G)\big) = \tau(X, \ba^{t( {p^e - 1 \over p^e})})\omega_X
\end{equation}
for all sufficiently divisible $e$ (\textit{i.e.} whenever $t(p^{e} - 1) \in \Z$).  Then setting $e = e_1 + e_2$ with both $e_1, e_2$ sufficiently divisible we consider the factorization of $\Tr^e$:
\[
\begin{array}{rl}
& F^{e_1 + e_2}_* \pi_* \big( \tau(\omega_Y) \tensor \O_Y(-t(p^{e_1+e_2} - 1)G)\big)\\
\xrightarrow{\beta} & F^{e_2}_* \pi_* \big( \tau(\omega_Y, t({p^{e_1} - 1 \over p^{e_1}})G) \tensor \O_Y(-t (p^{e_2} - 1) G)\big)\\
= & F^{e_2}_* \pi_* \big( \tau(\omega_Y, (t-\varepsilon)G) \tensor \O_Y(-t (p^{e_2} - 1) G)\\
\xrightarrow{\alpha} & \omega_X.
\end{array}
\]
The map $\beta$ comes from applying \autoref{eq.SimpleFormulaTrTau} on $Y$ and then twisting and applying $F^{e_2}_* \pi_*$.  Again, the equality after $\beta$ follows from the fact that we already know discreteness in the principal case.
The image of $\alpha \circ \beta$ is $\tau(X, \ba^{t({p^{e_1+e_2} - 1 \over p^{e_1+e_2}})})\omega_X$ by a  slight generalization of \autoref{eq.EasyImageGeneralCoefficient}.  The main trick is to show that $\beta$ is surjective (for any $e_1$ sufficiently large and divisible and chosen after $e_2$) using Serre vanishing.  Granting this, the image of $\alpha$ is also $\tau(X, \ba^{t({p^{e_1+e_2} - 1 \over p^{e_1+e_2}})})\omega_X$.  On the other hand, the image of $\alpha$ is independent of $e_1$ and so we can increase $e_1$ while leaving $e_2$ fixed and thus show that $\tau(X, \ba^{t - \varepsilon})$ is constant.

Next, let us sketch the alternative proof of Skoda's theorem for test ideals, namely $\tau(X,\ba^{m}) = \ba \tau(X,\ba^{m - 1})$ for $m$ at least the number of generators of $\ba$, as mentioned in the introduction. Suppose momentarily we are in the local setting where $X$ is affine and $\ba$ has a reduction generated by $s_{1}, \ldots, s_{r} \in \O_{X}$.  If once more $\pi : Y \to X$ is a proper birational morphism such that $Y$ is normal and $\ba \O_{Y} = \O_{Y}(-G)$, then  $\pi^{*} s_{1}, \ldots, \pi^{*}s_{r}$ are globally generating sections of $\O_{Y}(-G)$.  This implies $(\pi^{*}s_{1})^{p^{e}}, \ldots, (\pi^{*}s_{r})^{p^{e}}$ globally generate $\O_{Y}(-p^{e}G)$ for any $e > 0$, and we may form the corresponding Koszul complex
\begin{equation}
\label{eq.BabyKoszulComplex}
0 \to \sF_{r} \to \sF_{r-1} \to \cdots \sF_{1} \to \sF_{0} \to 0
\end{equation}
where $\sF_{i} = \O_{Y}(ip^{e}G)^{\oplus {r \choose i}}$ and each of the maps are essentially given (up to sign) as multiplication by the sections $(\pi^{*}s_{j})^{p^{e}}$. Note that this complex is exact (locally) since $\sF_0$ is invertible, \cite[Theorem 1.6.5]{BrunsHerzog}.  Since this complex is a locally free resolution of the (flat) sheaf $\sF_{0}$, this complex remains exact after tensoring by any quasicoherent sheaf on $Y$.

Let $m \geq r$ be an integer and $D \geq K_{Y} -\pi^{*}K_{X}+(d+1)mG$ an effective Cartier divisor on $Y$. Tensoring the Koszul complex constructed in \autoref{eq.BabyKoszulComplex} by the (not necessarily invertible) sheaf $\O_{Y}(  K_{Y} - p^{e}(\pi^{*}K_{X} + m G) - D )$ preserves exactness, and the $i$-th entry becomes $$\sG_{i} = \O_{Y}(  K_{Y} -p^{e}(\pi^{*}K_{X} + (m - i)G) - D )^{\oplus {r \choose i}}.$$  Using a relative version of Fujita's vanishing theorem \cite[Theorem 1.5]{KeelerAmpleFiltersOfInvertibleSheaves}, since the divisor $-p^{e}(\pi^{*}K_{X} + (m - i)G)$ is relatively nef for all $i \leq m$ and any $e$, we may choose $D$ sufficiently large and $\pi$-antiample so that $R^{j} \pi_{*} \sG_{i} = 0$ for all $i$, any $j > 0$, and arbitrary $e > 0$.  Hence our complex remains exact after applying $\pi_{*}( \blank)$ (start from the left and work right via short exact sequences), and as $F_{*}^{e}(\blank)$ is exact ($F$ is affine) we have that the complex
\[
0 \to F^{e}_{*}\pi_{*} \sG_{r} \to F^{e}_{*}\pi_{*} \sG_{r-1} \to \cdots \to F^{e}_{*}\pi_{*} \sG_{1} \to F^{e}_{*}\pi_{*} \sG_{0} \to 0
\]
is exact on $X$.  Furthermore, after having applied $F^{e}_{*}( \blank)$, we may view the arrows as given by multiplying by $s_{1}, \ldots, s_{r}$.  Taking images under $\Tr^{e}$ preserves exactness on the right, giving a surjection for $e \gg 0$ by Proposition \ref{prop:alternatedescriptionsmoothspace}
\[
\tau(X, \ba^{m - 1})^{\oplus r} \to[( \; s_{1} \; s_{2} \; \cdots \; s_{r} )] \tau(X, \ba^{m})
\]
whence we immediately recover the following well-known result.

\begin{proposition} \cite[Theorem 4.1]{HaraTakagiOnAGeneralizationOfTestIdeals}
\label{prop.BasicSkoda}
  Suppose $X$ is a nonsingular affine variety, and $s_{1}, \ldots, s_{r} \in \O_{X}$ generate a reduction of an ideal $\ba$.  Then
\[
\tau(X,\ba^{m}) = \sum_{i} s_{i} \tau(X,\ba^{m - 1})
\]
for all $m \geq r$.
\end{proposition}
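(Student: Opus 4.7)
The plan is to mimic the characteristic-zero proof of Skoda's theorem almost verbatim, replacing Kawamata--Viehweg vanishing by a relative Fujita vanishing statement on a normalized blowup, and then taking trace images to pass from $Y$ back to $X$. Concretely, I pass to the normalized blowup $\pi\colon Y\to X$ of $\ba$, so that $\ba\cdot\O_Y=\O_Y(-G)$ is locally principal. Because $s_1,\dots,s_r$ generate a reduction of $\ba$, their pullbacks globally generate $\O_Y(-G)$, and hence the $p^e$-th powers $(\pi^* s_1)^{p^e},\dots,(\pi^* s_r)^{p^e}$ globally generate $\O_Y(-p^eG)$ for every $e\ge 0$. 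Forming the Koszul complex on these sections yields a locally free resolution
\[
0\to \O_Y(rp^eG)^{\oplus\binom{r}{r}}\to\cdots\to \O_Y(p^eG)^{\oplus r}\to \O_Y\to 0,
\]
which remains exact after tensoring by any sheaf.

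The central step is to twist this complex by $\O_Y(K_Y-p^e(\pi^*K_X+mG)-D)$ for a sufficiently large effective Cartier divisor $D\ge K_Y-\pi^*K_X+(d+1)mG$ with $-D$ relatively ample, so that the $i$-th term becomes a direct sum of copies of
\[
\sG_i \; = \; \O_Y\bigl(K_Y-p^e(\pi^*K_X+(m-i)G)-D\bigr).
\]
Because $m\ge r\ge i$ forces $-(\pi^*K_X+(m-i)G)$ to be $\pi$-nef for all $0\le i\le r$, I can invoke a relative Fujita-type vanishing (in the form of Keeler) to enlarge $D$ once and for all, \emph{independent of $e$}, so that $R^j\pi_*\sG_i=0$ for every $i$, every $j>0$, and every $e>0$. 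This uniform-in-$e$ vanishing is the main obstacle, and the requirement $m\ge r$ is precisely what is needed to secure it.

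With the vanishing arranged, pushing forward by $\pi_*$ preserves exactness of the twisted Koszul complex (by descending through short exact sequences), and applying $F^e_*$ preserves exactness because Frobenius is affine. The resulting complex on $X$ has differentials given, up to sign, by multiplication by the sections $s_1,\dots,s_r$. Taking images under the trace $\Tr^e\colon F^e_*\omega_X\to\omega_X$ preserves right exactness, so for $e\gg 0$ I obtain a surjection
\[
\tau(X,\ba^{m-1})^{\oplus r}\;\xrightarrow{(\,s_1\;\cdots\;s_r\,)}\;\tau(X,\ba^m),
\]
where the identifications of the two outermost images come from \autoref{prop:alternatedescriptionsmoothspace} applied to the exponents $(m-1)p^e$ and $mp^e$ respectively (with the same $D$). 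This gives the inclusion $\tau(X,\ba^m)\subseteq\sum_i s_i\tau(X,\ba^{m-1})$; the reverse inclusion is immediate from $\ba\cdot\tau(X,\ba^{m-1})\subseteq\tau(X,\ba^m)$, which follows directly from the definition of $\tau$ via iterated trace maps.
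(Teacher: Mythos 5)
Your proof is correct and takes essentially the same route as the paper: blowup to principalize $\ba$, form the Koszul complex on $(\pi^*s_j)^{p^e}$, twist by $\O_Y(K_Y - p^e(\pi^*K_X + mG) - D)$, kill higher direct images by relative Fujita vanishing (uniformly in $e$, using $m\ge r$ so every $-(\pi^*K_X+(m-i)G)$ is $\pi$-nef), push down, apply $F^e_*$ and $\Tr^e$, and identify the ends via \autoref{prop:alternatedescriptionsmoothspace}. The only cosmetic differences are that you insist on the normalized blowup where the paper allows any suitable proper birational $\pi$, and you explicitly note the easy reverse inclusion, which the paper leaves implicit.
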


Roughly the same idea can easily allow us to also handle log $\bQ$-Gorenstein triples $(X, \Delta, \ba^t)$.  Of course, the novelty lies not in the above statement, but rather in its proof -- which directly mimics the proof Skoda's theorem for multiplier ideals given in \cite[Section~9.6]{LazarsfeldPositivity2}.  Furthermore, by following the same line of argument, we shall soon arrive at a positive characteristic analog of the global division theorem shown therein.

Let us now move to the global setting and consider an ideal sheaf $\ba$ on a nonsingular projective variety $X$.  Fix integral divisors $A$ and $L$ on $X$ such that $A$ is ample\footnote{In the notation of the introduction, $M = K_X + A$.} and such that $L$ has $r$ global sections
\[
s_{1}, \ldots, s_{r} \in H^{0}(X, \O_{X}(L) \tensor_{\O_{X}} \ba)
\]
generating $\O_{X}(L) \tensor_{\O_{X}} \bc$ for some reduction $\bc$ of $\ba$.  Consider any proper birational map $\pi \: Y \to X$ from a normal variety $Y$ onto $X$ that dominates the blowup of $X$ along $\ba$, so that $\ba \O_{Y} = \O_{Y}(-G)$ for some effective Cartier divisor $G$ on $Y$.

Our main goal is to show that some special sections of $\sM := \O_{X}(K_{X} + mL + A)$  vanishing along $\tau(X,\ba^{m})$ for some integer $m \geq r$ can be written as a linear combination of the $s_{i}$ -- and the approach will mimic that from above.  We will essentially use the perturbation divisor $D$ in the description of the test ideal from Proposition~\ref{prop:alternatedescriptionsmoothspace} together with the positivity of $A$ to force certain vanishings to hold and arrive at an exact complex -- the image of which under trace will give the desired result.  However, this means we can only hope to show the division theorem for those sections that are themselves always in the image of the trace map.  We note that this type of idea has appeared in various places, such as \cite{SmithFujitaFreenessForVeryAmple} and \cite{HaraACharacteristicPAnalogOfMultiplierIdealsAndApplications}, where one forms the section ring with respect to a line bundle and then takes graded pieces of the test ideal on that section ring.  One advantage of our formulation is it makes it more convenient to work with multiple line bundles on several varieties simultaneously.

To that end, setting $\sL^{X, \ba^{m}}_{e, \pi, D} := \O_{Y}(K_{Y} - p^{e}(\pi^{*}K_{X} + mG) - D)$ for an effective Cartier divisor $D \geq K_{Y} - \pi^{*}K_{X} + (d+1)mG$, we have
from \autoref{prop:alternatedescriptionsmoothspace} that
\[
\sum_{e \gg 0} \Tr^{e} (\sM \tensor_{\O_{X}} F^e_{*} \pi_{*} \sL^{X,\ba^{m}}_{e,\pi,D}) = \sM \tensor_{\O_{X}} \tau(X,\ba^{m}).
\]
In particular, $\Tr^{e}$ also induces a map on global sections
\[
\Tr^{e}\left(H^{0}(Y, (\pi^{*}\sM)^{p^{e}} \tensor_{\O_{Y}} \sL^{X, \ba^{m}}_{e, \pi, D})\right) \subseteq H^{0}(X, \sM \tensor_{\O_{X}} \tau(X,\ba^{m}))
\]
and we can only hope to show the division theorem for those sections in
\[
P^{0}(X, \sM \tensor_{\O_{X}} \tau(X, \ba^{m}) ): = \bigcap_{D} \sum_{e \gg 0} \Tr^{e}\left(H^{0}(Y, (\pi^{*}\sM)^{p^{e}} \tensor_{\O_{Y}} \sL^{X, \ba^{m}}_{e, \pi, D})\right)
\]
where the intersection is over all possible effective Cartier divisors $D$ on $Y$.  Note that this intersection does stabilize, as it is an intersection inside the finite dimensional vector space $H^{0}(X, \sM \tensor_{\O_{X}} \tau(X,\ba^{m}))$.  We also notice that we need the $\sum_{e \gg 0}$ since a priori it is unclear whether the images form an ascending chain as $e$ varies.
We now come to the statement of the positive characteristic analog of the global division theorem.

\begin{theorem}
\label{thm.BasicMainTheorem}
Consider an ideal sheaf $\ba$ on a nonsingular projective variety $X$.  Fix integral divisors $A$ and $L$ on $X$ such that $A$ is ample and $L$ has $r$ global sections
\[
s_{1}, \ldots, s_{r} \in H^{0}(X, \O_{X}(L) \tensor_{\O_{X}} \ba)
\]
generating $\O_{X}(L) \tensor_{\O_{X}} \bc$ for some reduction $\bc$ of $\ba$.   Then for any $m \geq r$, any section
\[
s \in P^{0}(X, \O_{X}(K_{X}+mL + A) \tensor_{\O_{X}} \tau(X, \ba^{m}))
\]
can be expressed as a linear combination
\[
s = \sum h_{i}s_{i}
\]
with $h_{i} \in P^{0}(X, \O_{X}(K_{X} + (m-1)L + A) \tensor \tau(X, \ba^{m-1}) )$.
\end{theorem}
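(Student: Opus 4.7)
The approach is to mimic Ein-Lazarsfeld's characteristic-zero proof exactly as outlined in the sketch preceding \autoref{prop.BasicSkoda}, now carried through in the global setting. Let $\pi\colon Y\to X$ be a proper birational morphism from a normal variety $Y$ dominating the normalized blowup of $\ba$, so that $\ba\cdot\O_Y = \O_Y(-G)$ is locally principal. Since $\bc\cdot\O_Y = \ba\cdot\O_Y = \O_Y(-G)$ on the normal scheme $Y$, the pullbacks $\pi^*s_1,\dots,\pi^*s_r$ globally generate $\sN := \pi^*\O_X(L)\otimes\O_Y(-G)$, hence their $p^e$-th powers $(\pi^*s_i)^{p^e}$ globally generate $\sN^{p^e}$ for every $e\geq 0$. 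The associated Koszul complex $0\to\sF_r\to\cdots\to\sF_0\to 0$ of locally free sheaves on $Y$, with differentials given (up to sign) by multiplication by the $(\pi^*s_j)^{p^e}$, is exact. Tensoring with $\sL^{X,\ba^m}_{e,\pi,D}\otimes(\pi^*\sM)^{p^e}$, where $\sM:=\O_X(K_X+mL+A)$, preserves exactness; a routine computation identifies the $i$-th term as
\[
\sG_i \;=\; \Bigl((\pi^*\sM_i)^{p^e}\otimes\sL^{X,\ba^{m-i}}_{e,\pi,D}\Bigr)^{\binom{r}{i}} \;=\; \O_Y\bigl(K_Y + p^e(m-i)(\pi^*L - G) + p^e\pi^*A - D\bigr)^{\binom{r}{i}},
\]
where $\sM_i := \O_X(K_X+(m-i)L+A)$.

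The technical heart is to establish $H^j(Y,\sG_i) = 0$ for all $j\geq 1$, all $0\leq i\leq r$, and $e\gg 0$. I would take $D$ both sufficiently large to satisfy the hypotheses of \autoref{prop:alternatedescriptionsmoothspace} and sufficiently $\pi$-antiample so that relative Serre/Fujita vanishing yields $R^q\pi_*\sG_i = 0$ for $q\geq 1$ once $e\gg 0$ (using that $-G$ is $\pi$-ample and $-D$ can be made $\pi$-ample); the Leray spectral sequence then reduces the problem to $H^j(X,\pi_*\sG_i) = 0$. The projection formula rewrites
\[
\pi_*\sG_i \;=\; \pi_*\O_Y\bigl(K_Y - p^e(m-i)G - D\bigr)\otimes\O_X\bigl(p^e((m-i)L + A)\bigr)^{\binom{r}{i}},
\]
and one applies Fujita's vanishing theorem on $X$ with the ample divisor $A$ together with the asymptotic growth of the $p^e$-twist. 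Controlling the $e$-dependence of the underlying coherent sheaf is the chief subtlety, accomplished via the $\pi$-antiampleness and size of $D$. This is the main obstacle: Kawamata-Viehweg is unavailable in positive characteristic, and the perturbation divisor $D$ together with Fujita's vanishing is the key technical device replacing it.

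Granting the vanishing, the global-sections sequence of the Koszul complex is exact, yielding in particular a surjection $H^0(Y,\sG_1)\twoheadrightarrow H^0(Y,\sG_0)$. Given $s\in P^0(X,\sM\otimes\tau(X,\ba^m))$, fix a Cartier divisor $D'$ on $Y$ large enough that the decreasing intersection defining $P^0(X,\sM_1\otimes\tau(X,\ba^{m-1}))$ has stabilized to the single summand
\[
V'_{D'} \;:=\; \sum_{e'\gg 0}\Tr^{e'}\Bigl(H^0\bigl(Y,(\pi^*\sM_1)^{p^{e'}}\otimes\sL^{X,\ba^{m-1}}_{e',\pi,D'}\bigr)\Bigr),
\]
then choose $D\geq D'$ also satisfying the vanishing hypotheses. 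For some $e\gg 0$ we may write $s = \sum_j\Tr^e(t_j)$ with $t_j\in H^0(Y,\sG_0)$; by the Koszul surjectivity lift each $t_j = \sum_i(\pi^*s_i)^{p^e}u_{j,i}$ with $u_{j,i}\in H^0(Y,(\pi^*\sM_1)^{p^e}\otimes\sL^{X,\ba^{m-1}}_{e,\pi,D})$. Applying $\Tr^e$ and the Frobenius projection formula $\Tr^e((\pi^*s_i)^{p^e}\cdot x) = s_i\cdot\Tr^e(x)$ gives
\[
s \;=\; \sum_i s_i h_i, \qquad h_i \;:=\; \sum_j\Tr^e(u_{j,i}).
\]
Since $D\geq D'$ implies $\sL^{X,\ba^{m-1}}_{e,\pi,D}\subseteq\sL^{X,\ba^{m-1}}_{e,\pi,D'}$, each $h_i$ lies in $V'_{D'} = P^0(X,\sM_1\otimes\tau(X,\ba^{m-1}))$, completing the proof.
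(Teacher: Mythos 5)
Your proposal follows essentially the same overall strategy as the paper's proof: form the Koszul complex on $\pi:Y\to X$ from $(\pi^*s_i)^{p^e}$, twist by the appropriate $\sL^{X,\ba^m}_{e,\pi,D}\otimes(\pi^*\sM)^{p^e}$, prove cohomology vanishing via Fujita, push forward, apply $\Tr^e$, and recognize the resulting surjection as the desired division. The identification of the terms $\sG_i$, the role of $D$, and the final trace/projection-formula step are all as in the paper.

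The one genuine gap is the vanishing argument, which is exactly where you flag ``the chief subtlety.'' Your route---use relative Fujita to get $R^q\pi_*\sG_i=0$, degenerate Leray, rewrite via the projection formula, and apply global Fujita on $X$---converts the problem into the vanishing of
\[
H^j\!\Bigl(X,\ \pi_*\O_Y(K_Y - p^e(m-i)G - D)\otimes\O_X\bigl(p^e((m-i)L+A)\bigr)\Bigr),
\]
but the coherent sheaf $\pi_*\O_Y(K_Y - p^e(m-i)G - D)$ varies with $e$, so Fujita's theorem (which requires a \emph{fixed} sheaf twisted by multiples of an ample divisor plus a nef divisor) does not apply as stated; saying this is ``accomplished via the $\pi$-antiampleness and size of $D$'' leaves the heart of the matter unproved. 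The paper avoids this by applying Fujita directly on $Y$ to the \emph{line bundles} $\O_Y(\Lambda_{m-i})$, where $\omega_Y$ serves as the single fixed sheaf; even there one must be careful (the ample twist cannot simply be $p^e\pi^*A-D$ since that does not have the shape $m_0 H + \text{nef}$), and the resolution---visible in the full proof of Theorem~\ref{thm.MainResult}---is to fix a $\pi$-antiample $D'$, fix $b$ with $b\pi^*A - D'$ ample, take $D=kD'$, and decompose $p^e\pi^*A - kD' = (p^e-kb)\pi^*A + k(b\pi^*A - D')$ so that the ample coefficient $k$ is a fixed integer chosen for Fujita \emph{after} which $e$ is taken large. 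Your proposal, by pushing to $X$ before applying Fujita, makes this $e$-uniformity harder rather than easier. Separately and more minorly: in your final step you only impose $D\geq D'$ stabilizing the target $P^0$ for $\tau(\ba^{m-1})$, but you also implicitly need $D$ large enough to stabilize the source $P^0$ for $\tau(\ba^m)$ (and you should sum over several values of $e$, since the single-$e$ images need not form a chain); both are easily fixed by taking $D$ large enough for all the stabilizations simultaneously, as the paper does.
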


\begin{proof}
Consider any projective birational map $\pi \: Y \to X$ from a normal variety $Y$ that dominates the blowup of $X$ along $\ba$, so that $\ba \O_{Y} = \O_{Y}(-G)$ for some effective Cartier divisor $G$ on $Y$.
The sections $\pi^{*} s_{1}, \ldots, \pi^{*}s_{r}$ globally generate sections of $\O_{Y}(\pi^{*}L-G)$, hence also $(\pi^{*}s_{1})^{p^{e}}, \ldots, (\pi^{*}s_{r})^{p^{e}}$ globally generate $\O_{Y}(p^{e}(\pi^{*}L -G))$ for any $e > 0$. Begin by forming the corresponding Koszul complex
\[
0 \to \sF_{r} \to \sF_{r-1} \to \cdots \sF_{1} \to \sF_{0} \to 0
\]
where $\sF_{i} = \O_{Y}(-ip^{e}(\pi^{*}L-G))^{\oplus {r \choose i}}$ and each of the maps are essentially given (up to sign) as multiplication by the sections $(\pi^{*}s_{j})^{p^{e}}$. As before, this complex is a locally free resolution of the (flat) sheaf $\sF_{0} = \O_{Y}$, and so remains exact after tensoring by any quasicoherent sheaf on $Y$.  Set
\[
\Lambda_{j} = K_{Y} + jp^{e} (\pi^{*}L - G) + p^{e} \pi^{*} A - D
\]
for $j = 0, \ldots, m$. After we tensor the Koszul complex above by $\O_{Y}(\Lambda_{m})$, the $i$-th entry in the complex becomes $\sG_{i} = \O_{Y}(\Lambda_{m-i})^{\oplus {r \choose i}}$.

Since $\pi^{*}L - G$ is globally generated, it is certainly nef.  Furthermore, since $A$ is ample on $X$, we may take $e$ and $D$ sufficiently large (\textit{i.e.} $-D$ sufficiently $\pi$-ample) so that $p^{e}\pi^{*}A - D$ is ample.  Possibly increasing $e$ and $D$ further, we may once more apply the relative and global versions of Fujita's vanishing theorem \cite{KeelerAmpleFiltersOfInvertibleSheaves} to guarantee
\[
R^{j} \pi_{*} \O_{Y}(\Lambda_{m-i}) = 0
\qquad
H^{j}(Y, \O_{Y}(\Lambda_{m-i})) = 0
\]
for all $j > 0$ and all $i \geq 0$.  This gives that the complex
\[
0 \to F^{e}_{*}\pi_{*} \sG_{r} \to F^{e}_{*}\pi_{*} \sG_{r-1} \to \cdots \to F^{e}_{*}\pi_{*} \sG_{1} \to F^{e}_{*}\pi_{*} \sG_{0} \to 0
\]
is exact, and furthermore that it remains exact after taking global sections.  Thus, we have once more a surjective map
\begin{equation}
\label{eq:finalsurjection}
\Tr^{e}\left( H^{0}(X, F^{e}_{*}\pi_{*}\sG_{1})\right) \to[( \; s_{1} \; s_{2} \; \cdots \; s_{r} \; )] \Tr^{e}\left( H^{0}(X, F^{e}_{*}\pi_{*}\sG_{0})\right)
\end{equation}
where our notation on the left indicates that the trace map has been applied individually to each direct summand of
\[
H^{0}(X, F^{e}_{*}\pi_{*}\sG_{1}) = \left[ H^{0}(X,F^{e}_{*}\pi_{*} \O_{Y} (\Lambda_{m-1}))\right]^{\oplus r}.
\]
We then have that both
\[
\sum_{e \gg 0} \Tr^{e}\left( H^{0}(X, F^{e}_{*}\pi_{*}\sG_{1})\right) = \left[P^{0}(X, \O_{X}(K_{X} + (m-1)L + A) \tensor_{\O_{X}} \tau(X, \ba^{m-1}))\right]^{\oplus r}
\]
and
\[
\sum_{e \gg 0} \Tr^{e}\left( H^{0}(X, F^{e}_{*}\pi_{*}\sG_{0})\right) = P^{0}(X, \O_{X}(K_{X} + mL + A) \tensor_{\O_{X}} \tau(X, \ba^{m}))
\]
hold, and the desired conclusion now follows immediately from the surjectivity of \eqref{eq:finalsurjection}.
\end{proof}

At this point, our goal for the remainder of this article is essentially to push forward the above arguments to the general setting.  Once again, the first step is to generalize previously known descriptions of the test ideal so as to gain access to certain cohomology vanishing theorems.

\section{Alternate description of test ideals}
\label{sec.DescriptionOfTau}

In this section we introduce two alternate descriptions of test ideals which will motivate what we do later (and gives a posteriori motivation for the definitions in \autoref{sec.SmoothVarieties}).  The reader who is not interested in these formalities is invited to skip ahead to \autoref{prop.TauDescriptionViaBlowup} and take either as the \emph{definition} of the test ideal.  The second description will be useful in the proof of \autoref{thm.EffectiveTauComputationGeneral1}, while the first description will appear in \autoref{thm.MainResult}.

\begin{convention}
\label{conv.MainConvention}
Throughout this paper, all schemes $X$ will be Noetherian, separated and of equal characteristic $p > 0$.  They will additionally be assumed $F$-finite, meaning that the (absolute) Frobenius morphism $F : X \to X$ is finite.  This implies that all our schemes are locally excellent \cite{KunzOnNoetherianRingsOfCharP}.  We further assume that all schemes have dualizing complexes $\omega_{X}^{\mydot}$, which is automatic in the case of $F$-finite affine schemes by \cite{Gabber.tStruc}.  Finally, we assume throughout that $F^! \omega_X^{\mydot} \cong \omega_X^{\mydot}$, which is true on sufficiently small affine charts of any $F$-finite scheme, as well as on schemes of finite type over any scheme for which this property holds (\textit{e.g.} varieties over a perfect field); for additional discussion of this last condition, see \cite{BlickleSchwedeTakagiZhang}.
\end{convention}

\begin{definition}[The trace map]
Suppose that $X$ is an integral scheme satisfying the above conditions.  Then Grothendieck dual to the Frobenius map $\O_X \to F^e_* \O_X$ we obtain a map $F^e_* \omega_X^{\mydot} \to \omega_X^{\mydot}$ on the dualizing complex (shifted so that the first non-zero cohomology is in degree $-\dim X$).  By taking the $-\dim X$ cohomology, we obtain a map of canonical modules $F^e_* \omega_X \to \omega_X$ which we denote by $\Tr^e$ and call the \emph{trace map}.  More generally, for any morphism $\rho : Y \to X$ between integral schemes of the same dimension, one can construct $\Tr_{\rho} : \rho_* \omega_Y \to \omega_X$ in a similar manner; see \cite[Proposition 2.18]{BlickleSchwedeTuckerTestAlterations} for further discussion.
\end{definition}

\begin{definition}
\label{triple.definition}
A \emph{(log $\bQ$-Gorenstein) triple} $(X, \Delta, \ba^t)$ is a normal integral scheme
$X$ together with an effective $\bQ$-divisor $\Delta$ on $X$ such that $K_X + \Delta$ is $\bQ$-Cartier and an ideal sheaf $\ba$ on $X$ with a non-negative real coefficient $t$.
\end{definition}

\begin{definition}
\label{def.TauDefinition}
Suppose that $(X, \Delta, \ba^t)$ is a log $\bQ$-Gorenstein triple with $X = \Spec R$ affine.  Then the \emph{test ideal of $(X, \Delta, \ba^t)$} is denoted by $\tau(X, \Delta, \ba^t)$ and defined to be the unique smallest non-zero ideal $J$ satisfying the following condition:  for every $e > 0$ and every section $\phi \in \Hom_{\O_X}(F^e_* \O_X(\lceil (p^e - 1)\Delta \rceil), \O_X)$, one has
    \[
        \phi(F^e_* (\ba^{\lceil t(p^e - 1) \rceil} J)) \subseteq J.
    \]
    Here we view $F^e_* (\ba^{\lceil t(p^e - 1) \rceil} J) \subseteq F^e_* \O_X \subseteq F^e_* \O_X(\lceil (p^e - 1) \Delta \rceil)$.  It is straightforward to verify that the test ideal is compatible with localization, and hence this definition can be extended to triples where $X$ is not affine by gluing in the obvious manner.
\end{definition}

Let us also briefly record the following lemma for later use.

\begin{lemma}
\label{lem.UniformImages}
Let $\eta : Y \to W$ be a proper birational map between normal varieties. Suppose that $\Gamma$ is a $\bQ$-Cartier divisor on $W$, and $E \geq 0$ is a Weil divisor on $Y$.  For each $e > 0$, there is a natural inclusion of sheaves
\[
\eta_* \O_Y(K_Y - \lfloor p^e \eta^* \Gamma \rfloor - E) \subseteq \O_W(K_W - \lfloor p^e \Gamma \rfloor).
\]
Furthermore, there exists a divisor $D> 0$ on $W$ such that
\[
\eta_* \O_Y(K_Y - \lfloor p^e \eta^* \Gamma \rfloor - E) \supseteq \O_W(K_W - \lfloor p^e \Gamma \rfloor - D)
\]
for all $e>0$.
\end{lemma}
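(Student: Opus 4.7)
The plan is to verify both containments by comparing coefficients at prime divisors, exploiting that $\eta$ is an isomorphism outside a closed subset of codimension at least two in $W$ (by normality of $W$). For the first containment, both sheaves embed into the constant sheaf of rational functions on $W$ via the identification $K(W) = K(Y)$, so the inclusion reduces to checking at each prime divisor $Q$ of $W$. Each such $Q$ has a unique prime divisor $\tilde Q \subseteq Y$ dominating it, and $\ord_Q = \ord_{\tilde Q}$ as valuations on $K(W)$, with compatible choices $\ord_{\tilde Q}(K_Y) = \ord_Q(K_W)$ and $\ord_{\tilde Q}(\eta^* \Gamma) = \ord_Q(\Gamma)$. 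A section $f$ of the LHS satisfies $\ord_{\tilde Q}(f) + \ord_{\tilde Q}(K_Y) \geq \ord_{\tilde Q}(\lfloor p^e \eta^* \Gamma \rfloor) + \ord_{\tilde Q}(E)$; discarding the nonnegative $\ord_{\tilde Q}(E)$ and rewriting in $Q$-data gives the defining inequality for membership in the RHS.

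For the second containment, the task is to construct a single $D$ that works uniformly in $e \geq 1$. Decompose $E = E^{\mathrm{nex}} + \sum_{j=1}^k e_j' P_j$ into non-exceptional and exceptional components, where $P_1, \ldots, P_k$ are the finitely many exceptional prime divisors of $Y$, and set $\gamma_j = \ord_{P_j}(\eta^* \Gamma)$ and $a_j = \ord_{P_j}(K_Y)$. At a non-exceptional prime $\tilde Q$ of $Y$ lying over $Q \subseteq W$, the same coefficient-wise analysis as in the first part shows that the containment at $\tilde Q$ holds provided $\ord_Q(D) \geq \ord_{\tilde Q}(E)$; since $E$ has only finitely many components, choosing $D$ to dominate the finite effective divisor $\eta_* E^{\mathrm{nex}}$ handles all non-exceptional primes at once. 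The genuine difficulty lies at the exceptional primes $P_j$, where for each $e$ one needs $\ord_{P_j}(f) \geq \lfloor p^e \gamma_j \rfloor + e_j' - a_j$ for all sections $f$ of the RHS --- a bound that grows linearly in $p^e$, while $D$ is to be chosen once and for all.

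The main obstacle is that $K_W$ need not be $\bQ$-Cartier, so one cannot directly pull back the hypothesis on $W$ to $Y$ to read off $\ord_{P_j}(f)$. I plan to bypass this via the $\bQ$-Cartier hypothesis on $\Gamma$ together with a valuation-theoretic argument. Fix $r$ with $r\Gamma$ Cartier and note that $\lfloor p^e \Gamma \rfloor \geq p^e \Gamma - \Gamma^{\mathrm{red}}$ coefficient-wise (since $\{p^e c\} < 1$ for any $c$), which replaces the non-$\bQ$-Cartier floor $\lfloor p^e \Gamma \rfloor$ by the $\bQ$-Cartier divisor $p^e \Gamma$ at the cost of an $e$-independent error. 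For each exceptional $P_j$, the image $\overline{\eta(P_j)} \subseteq W$ has codimension at least two and is therefore contained in some effective Cartier divisor $D_j^0$ on $W$ whose pullback has positive coefficient along $P_j$ (such a $D_j^0$ exists because the local ring of $W$ at the generic point of $\overline{\eta(P_j)}$ is a normal local domain of dimension at least two, hence has nonzero elements in its maximal ideal). Including a suitable multiple of $\sum_j D_j^0$ in $D$ forces $\ord_{P_j}(f) \geq p^e \gamma_j + O(1)$, with the $O(1)$ absorbing all bounded contributions from $K_W$, $\Gamma^{\mathrm{red}}$, and the discrepancies $a_j$ and $e_j'$. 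This yields a uniform $D$ valid for all $e \geq 1$.
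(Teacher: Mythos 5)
Your argument takes essentially the same route as the paper's, just with more explicit bookkeeping: both reduce the second containment to an effective $\bQ$-Cartier inequality $\Div_W(f) - p^e\Gamma - C' \geq 0$ on $W$ (with $C'$ effective Cartier, in your version including suitable multiples of the $D_j^0$'s) and then pull it back along $\eta$, absorbing the $e$-independent contributions of $K_W$, $\Gamma^{\mathrm{red}}$, and the discrepancies $a_j$, $e_j'$ into a single large effective Cartier $D$. The paper packages this more tersely by choosing one effective Cartier $C$ on $W$ with $\eta^*C \geq E - K_Y$ and taking $D \geq K_W + C + \Supp\Gamma$, where your decomposition into exceptional and non-exceptional parts handles the same primes separately. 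One step you should make explicit: the membership conditions at codimension-one primes of $W$ say nothing directly about the exceptional valuations $\ord_{P_j}$, so the mere presence of $D_j^0$ in $D$ does not "force" a bound on $\ord_{P_j}(\eta^*f)$ until you record the coefficient-wise inequality $\Div_W(f) \geq p^e\Gamma + C'$, note that it is an inequality of $\bQ$-Cartier divisors (hence pulls back to an effective divisor on $Y$), and only then read off the $P_j$-coefficient -- this pull-back of an effective $\bQ$-Cartier $\bQ$-divisor is precisely where the Cartier hypotheses on $\Gamma$ and on $D_j^0$ earn their keep.
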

\begin{proof}
Both statements immediately reduce to the case that is $W$ is affine, which we now assume.
After identifying the function fields $ K(Y) = K(W)$, we have a natural inclusion $\eta_{*}\O_{Y}(B) \subseteq \O_{Y}(\eta_{*}B)$ for any Weil divisor $B$ on $Y$.  Using that $E$ is effective, this immediately gives the first statement.  For the second, start by taking any effective Cartier divisor $C$ on $W$ such that $\eta^{*}C \geq E - K_{Y}$.  Now let $D$ be any effective Cartier divisor on $W$ such that $D \geq K_{W} + C + \Supp \Gamma$.  We have
\[
K_{W} - \lfloor p^{e}\Gamma \rfloor - D \leq - \lfloor p^{e} \Gamma \rfloor - \Supp \Gamma - C \leq - p^{e} \Gamma - C
\]
and also
\[
-p^{e} \eta^{*}\Gamma - \eta^{*}C \leq \lceil -p^{e} \eta^{*}\Gamma \rceil +K_{Y} - E = K_{Y} - \lfloor p^{e}\eta^{*}\Gamma \rfloor - E.
\]
But now $f \in \O_W(K_{W} - \lfloor p^{e}\Gamma \rfloor - D)$ if and only if $\Div_W(f) + K_{W} - \lfloor p^{e}\Gamma \rfloor - D \geq 0$ which implies that $\Div_W(f) - p^{e} \Gamma - C \geq 0$.  Therefore, with such an $f$, $\Div_Y(\eta^* f) -p^{e} \eta^{*}\Gamma - \eta^{*}C \geq 0$, which implies that $\Div_Y(\eta^* f) + K_{Y} - \lfloor p^{e}\eta^{*}\Gamma \rfloor - E \geq 0$. This completes the proof of the second statement.
\end{proof}

We now give an alternate description of the test ideal.  In particular, this description can be interpreted as a description of the test ideal, similar to that of a multiplier ideal, except that instead of a resolution, we take any blowup and then repeatedly apply Frobenius.

\begin{proposition}
\label{prop.TauDescriptionViaBlowup}
Let $(X, \Delta, \ba^{t})$ be a log-$\Q$-Gorenstein triple. 
Consider any proper birational map $\pi \: Y \to X$ from a normal variety $Y$ that dominates the blowup of $X$ along $\ba$, so that $\ba \O_{Y} = \O_{Y}(-G)$ for some effective Cartier divisor $G$ on $Y$.
Then we have
\[
\tau(X, \Delta, \ba^t) = \bigcap_{D} \bigcap_{e_0 \geq 0} \left( \sum_{e \geq e_0} \Tr^{e}\left( F^e_* \pi_*\sL^{X,\Delta,\ba^{t}}_{e, \pi, D}   \right)\right)
\]
where the intersection ranges over all effective divisors $D$ on $Y$ and we set
\[
\sL^{X,\Delta,\ba^{t}}_{e, \pi, D} = \O_{Y}( \lceil K_{Y} - p^e(\pi^{*}(K_{X}+\Delta) + t G)\rceil - D).
\]
Furthermore, this intersection stabilizes for all sufficiently large divisors $D$, and for some $e_0$ depending on $D$.  In other words, there exists an effective divisor $D$ and some $e_0 > 0$ such that
\[
\tau(X, \Delta, \ba^t) =  \sum_{e \geq e_0} \Tr^{e}\left( F^e_* \pi_*\sL^{X,\Delta,\ba^{t}}_{e, \pi, D}   \right)
\]
and moreover the same equality holds after increasing the size of $D$ or $e_{0}$ (although one is free to take $e_0 = 0$ for $D$ large enough as well).
\end{proposition}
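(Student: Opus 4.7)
The plan is to match both sides via Grothendieck duality and then reduce the stabilization to the principal-case techniques of \autoref{sec.SmoothVarieties}. First I would apply Grothendieck duality to rewrite $\tau$. The maps $\phi \in \Hom_{\O_X}(F^e_* \O_X(\lceil (p^e-1)\Delta \rceil), \O_X)$ appearing in \autoref{def.TauDefinition} correspond canonically to sections of $F^e_* \O_X(\lceil (1-p^e)(K_X + \Delta) \rceil)$ via an appropriate twist of the trace $\Tr^e : F^e_* \omega_X \to \omega_X$. In this language,
\[
\tau(X, \Delta, \ba^t) \cdot \omega_X = \sum_{e \gg 0} \Tr^e \Bigl( F^e_* \bigl( \omega_X(\lceil -(p^e - 1)(K_X + \Delta) \rceil) \otimes \ba^{\lceil t(p^e - 1) \rceil} \bigr) \Bigr),
\]
with the sum an eventually stabilizing ascending union.

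Next I would compare this with the images from $Y$. By functoriality of the trace in Grothendieck duality, the composite trace maps from $F^e_*\pi_* \sL^{X,\Delta,\ba^t}_{e,\pi,D}$ to $\omega_X$ factor through the expression above. The first statement of \autoref{lem.UniformImages}, applied with $\Gamma = K_X + \Delta$ and a further twist by $-p^e t G$, places $\pi_* \sL^{X,\Delta,\ba^t}_{e,\pi,D}$ as a subsheaf of $F^e_*(\omega_X(\lceil -(p^e - 1)(K_X + \Delta) \rceil) \otimes \ba^{\lceil t(p^e - 1) \rceil})$, yielding the inclusion ``RHS $\subseteq \tau(X, \Delta, \ba^t)$'' for every $D$ and every $e_0$. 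Conversely, the second statement of \autoref{lem.UniformImages} furnishes an effective divisor $D_0$ on $Y$, independent of $e$, for which the reverse containment holds after enlarging $D_0$ to absorb the rounding discrepancies arising from $p^e \pi^*(K_X + \Delta)$ and $p^e t G$. Thus the image from $Y$ captures every $\phi(F^e_*(\ba^{\lceil t(p^e - 1) \rceil}))$ up to the $D_0$-correction, and intersecting over all $D$ recovers $\tau(X, \Delta, \ba^t)$ on the nose.

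For stabilization it suffices to show that, for some $D$ sufficiently large, the images $\Tr^e(F^e_* \pi_* \sL^{X,\Delta,\ba^t}_{e,\pi,D})$ form an ascending chain in $e$. Since $\omega_X$ is coherent, this chain becomes constant and equal to the full sum, so the intersection over $D \geq 0$ and $e_0 \geq 0$ collapses to a single term. To produce the ascending property I would mimic the proof of \autoref{prop:alternatedescriptionsmoothspace}: the $\pi$-ampleness of $-G$ together with relative Serre vanishing forces $R^1 \pi_*$ of the kernel of each trace comparison (after twisting by $\O_Y(-p^e t G)$) to vanish for $e$ large, which in turn yields the containment $\Tr^e(F^e_*(\ldots)) \subseteq \Tr^{e+1}(F^{e+1}_*(\ldots))$.

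The main technical obstacle is the uniform bookkeeping of the $\bQ$-divisor roundings: one must produce a single perturbation divisor $D$ on $Y$ whose contribution dominates both the error in $\lceil p^e \pi^*(K_X + \Delta) \rceil$ versus $p^e(K_X + \Delta)$ (handled by \autoref{lem.UniformImages}) and the discrepancy between $\lceil tp^e \rceil$ and $\lceil t(p^e - 1) \rceil$ (handled by enlarging $D$ by a fixed multiple of $G$). Once these are absorbed into $D$, the remainder is a mild elaboration of the smooth, principal case carried out in \autoref{sec.SmoothVarieties}.
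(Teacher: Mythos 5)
Your outline shares the right ingredients with the paper — \autoref{lem.UniformImages}, the role of $-D$ as a $\pi$-side replacement for a test element, and the observation that $\pi$-ampleness of $-G$ enables Serre-type vanishing — but the core step is incorrect, and it breaks the whole argument.

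The formula you give via ``Grothendieck duality,''
\[
\tau(X, \Delta, \ba^t) \cdot \omega_X = \sum_{e \gg 0} \Tr^e \Bigl( F^e_* \bigl( \omega_X(\lceil -(p^e - 1)(K_X + \Delta) \rceil) \otimes \ba^{\lceil t(p^e - 1) \rceil} \bigr) \Bigr),
\]
is false: without a perturbation by (the divisor of) a test element, the right-hand side computes something strictly larger than $\tau$. Take $\ba = \O_X$, $\Delta = 0$, and $X$ $F$-pure but not strongly $F$-regular with $K_X$ Cartier; then each $\Tr^e(F^e_* \omega_X(\lceil K_X - p^e K_X \rceil))$ is all of $\omega_X$, whereas $\tau(\omega_X) \subsetneq \omega_X$. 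The correct local formula, which the paper cites as \autoref{eq.TauClassicalSum}, is
\[
\tau(X, \Delta, \ba^t) = \sum_{e \geq e_0} \Tr^e F^e_* \bigl( \ba^{\lceil t p^e \rceil} \cdot \O_X(\lceil K_X - p^e (K_X + \Delta) - B \rceil)\bigr)
\]
for a sufficiently large Cartier divisor $B$ (a test element), and the $-B$ is indispensable. As a downstream consequence, your claim that the inclusion $\Tr^e(F^e_*\pi_*\sL^{X,\Delta,\ba^t}_{e,\pi,D}) \subseteq \tau(X,\Delta,\ba^t)$ holds ``for every $D$'' is also false — for $D = 0$ one again obtains something like $\sigma$, not $\tau$. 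The paper proves this containment only for a \emph{sufficiently large} $D$ (constructed via the Claim in the proof), which is all that is needed since the left side of the proposition is an intersection over $D$.

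Two further issues are worth flagging. First, $\pi_*\O_Y(-mG) = \overline{\ba^m}$ (integral closure), not $\ba^m$; the paper compensates by absorbing the difference into $B_0$ via the Brian\c{c}on--Skoda theorem, and your proposal silently conflates the two. Second, your plan proves $\tau \subseteq (\text{image from } Y)$ only for a single $D_0$ coming from \autoref{lem.UniformImages}, but the inclusion $\tau \subseteq \bigcap_D(\cdots)$ requires this for \emph{every} $D$; the paper handles this by choosing $B \geq B_0$ with $\pi^*B \geq D$ for the given $D$, so that the arbitrariness of $D$ is matched by the arbitrariness of the admissible test element divisor. Once the correct formula with $B$ is in hand, your structure (first statement of \autoref{lem.UniformImages} for one direction, second statement — in the form of the paper's Claim — for the other) does close up; the gap is specifically the omitted test-element perturbation and the ensuing logical inversion of which containment holds for which $D$.
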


\begin{proof}
We can assume that $X$ is affine.  It is well known that there exists a Cartier divisor $B_0 > 0$  on $X$ (the vanishing locus of a test element) and some $e_0 \geq 0$ (or any $e_0 \geq 0$) such that for all Cartier $B \geq B_0$ we have
\begin{equation}
\label{eq.TauClassicalSum}
\begin{array}{rcl}
 \tau(X, \Delta, \ba^t) &
= & \sum_{e \geq e_0} \Tr^e F^e_* \left( \ba^{\lceil t p^e \rceil} \cdot \O_X(\lceil K_X - p^e (K_X + \Delta) - B \rceil)\right) \\
&= & \sum_{e \geq e_0} \Tr^e F^e_* \left( \overline{\ba^{\lfloor t p^e \rfloor}} \cdot \O_X(\lceil K_X - p^e (K_X + \Delta) - B \rceil)\right)
\end{array}
\end{equation}
\noindent
For example, see \cite[Lemma 2.1]{HaraTakagiOnAGeneralizationOfTestIdeals} or \cite[Definition-Proposition 3.3]{BlickleSchwedeTakagiZhang}.  Note the rounding is slightly different from other sources, but these differences may all easily be absorbed into $B_0$.  Similarly, the fact that we are taking the integral closure of $\ba^{\lfloor tp^e \rfloor}$ can be absorbed into $B_0$ as well using the tight-closure {B}rian\c con-Skoda theorem \cite[Theorem 5.4]{HochsterHunekeFRegularityTestElementsBaseChange}.

Fix $H > 0$ on $X$ such that
\[
\pi_* \O_Y(\lceil K_Y  - p^e\pi^*( K_X + \Delta) \rceil) \supseteq \O_X(\lceil K_X - p^e(K_X + \Delta) - H \rceil)
\]
for all $e \geq 0$ by \autoref{lem.UniformImages}.  Thus observe that
\[
\begin{array}{rl}
& \pi_* \O_Y(\lceil K_Y  - p^e(\pi^*( K_X + \Delta) + t G) - \pi^* B \rceil) \\
\supseteq & \pi_* \O_Y(\lceil K_Y  - p^e\pi^*( K_X + \Delta) - \pi^* B \rceil - \lceil p^e t G \rceil ) \\
\supseteq & \ba^{\lceil p^e t \rceil} \O_X(\lceil K_X - p^e(K_X + \Delta) - H - B\rceil).
\end{array}
\]
Applying $\Tr^{e} F^e_*$ to both sides and summing up, we then arrive at the containment
\[
\tau(X, \Delta, \ba^t) \subseteq \bigcap_{D, e_0 \geq 0} \left( \sum_{e \geq e_0} \Tr^{e}\left( F^e_* \pi_*\sL^{X,\Delta,\ba^{t}}_{e, \pi, D}   \right)\right)
\]
by noting that, given any $D$, we can always find $B \geq B_0$ such that $\pi^* B \geq D$. Also note that we may assume that $e_0 \gg 0$.

The reverse containment is similar; however, note that it suffices to show for some fixed $D$ that
\begin{equation}
\label{eq:stabilizationcontainment}
\sum_{e \geq 0} \Tr^{e}\left( F^e_* \pi_*\sL^{X,\Delta,\ba^{t}}_{e, \pi, D}   \right) \subseteq \tau(X, \Delta, \ba^t)
\end{equation}
which will simultaneously verify (all of) the stabilization statements as well.
To that end, we need show a claim which plays the same role as \autoref{lem.UniformImages} did above.
\begin{claim}
\label{claim.TauDescriptionViaBlowup}
There exists a divisor $D'$ on $Y$ such that, for all $e \geq 0$,
\[
\pi_* \O_Y(\lceil K_Y - p^e(\pi^*(K_X + \Delta) + t G) - D'\rceil) \subseteq \overline{\ba^{\lfloor p^e t \rfloor}} \cdot \O_X(\lceil K_X - p^e(K_X + \Delta) \rceil).
\]
\end{claim}

\noindent
Before proving the claim, let us finish the proof of Proposition \ref{prop.TauDescriptionViaBlowup}.  By taking $D = \pi^* B_{0} + D'$, twisting the containment from the claim by $\O_{X}(-B_{0})$, applying $F^{e}_{*}(\blank)$ and $\Tr^e(\blank)$ to both sides, and summing up over $e \geq 0$, we see that \eqref{eq:stabilizationcontainment} follows immediately from \eqref{eq.TauClassicalSum}.
\end{proof}

\begin{proof}[Proof of Claim]
  The statement is local, so we continue to assume that $X$ is affine, and may further assume $K_{X} \geq 0$.  Let $n$ be the index of $K_{X} + \Delta$, and let $C$ be an effective Cartier divisor on $X$ such that $C \geq n(K_{X} + \Delta) - K_X$.  Then for all $e > 0$ we have
\[
-\lfloor \frac{p^{e}}{n} \rfloor n (K_{X}+ \Delta) - C \leq K_X - p^{e}(K_{X}+ \Delta) \leq \lceil K_{X} - p^{e}(K_{X} + \Delta) \rceil
\]
and so it suffices to find $D'$ satisfying
\[
\pi_* \O_Y(\lceil K_Y - p^e(\pi^*(K_X + \Delta) + t G) - D'\rceil) \subseteq \overline{\ba^{\lfloor p^e t \rfloor}} \cdot \O_X( -\lfloor \frac{p^{e}}{n} \rfloor n (K_{X}+ \Delta) - C)
\]
for all $e \geq 0$.  Now, since $\overline{\ba^{\lfloor p^e t \rfloor}} = \pi_{*}\O_{Y}(-\lfloor p^{e} t \rfloor G)$ and $-\lfloor \frac{p^{e}}{n} \rfloor n (K_{X}+ \Delta) - C$ is Cartier, we have by the projection formula that
\[
\begin{array}{rcl}
\overline{\ba^{\lfloor p^e t \rfloor}} \cdot \O_X( -\lfloor \frac{p^{e}}{n} \rfloor n (K_{X}+ \Delta) - C) &=& \overline{\ba^{\lfloor p^e t \rfloor}} \otimes \O_X( -\lfloor \frac{p^{e}}{n} \rfloor n (K_{X}+ \Delta) - C) \\
&= & \pi_{*}\O_{Y}(-\lfloor \frac{p^{e}}{n} \rfloor n \pi^{*}(K_{X}+ \Delta) - \pi^{*}C -\lfloor p^{e} t \rfloor G)
\end{array}
\]
so it suffices to find $D'$ satisfying
\[
\lceil K_Y - p^e(\pi^*(K_X + \Delta) + t G) - D'\rceil \leq -\lfloor \frac{p^{e}}{n} \rfloor n \pi^{*}(K_{X}+ \Delta) - \pi^{*}C -\lfloor p^{e} t \rfloor G
\]
for all $e \geq 0$.  Then notice that
\[
\begin{array}{rl}
& \lceil K_Y - p^e(\pi^*(K_X + \Delta) + t G) \rceil + \lfloor \frac{p^{e}}{n} \rfloor n \pi^{*}(K_{X}+ \Delta) + \pi^{*}C + \lfloor p^{e} t \rfloor G\\
\leq & K_Y - \lfloor p^e \pi^* (K_X + \Delta)\rfloor - \lfloor p^e t G \rfloor + \lfloor \frac{p^{e}}{n} \rfloor n \pi^{*}(K_{X}+ \Delta) + \pi^{*}C + \lfloor p^{e} t \rfloor G\\
\leq & K_Y + \pi^* C - \lfloor ({p^e \over n} - \lfloor {p^e \over n} \rfloor )n\pi^* (K_X + \Delta) \rfloor - \lfloor (p^e t - \lfloor p^e t \rfloor) G\rfloor\\
\leq & K_Y + \pi^* C
\end{array}
\]
and so any $D' \geq K_Y + \pi^* C$ verifies the claim.
\end{proof}

\begin{remark}
The only novel part about \autoref{prop.TauDescriptionViaBlowup} is the fact that we use the divisor $D$ on $Y$ as a replacement for a test element on $X$.  The reason that this is useful is that, if $\pi$ is projective, we may choose $-D$ to be relatively ample.  It follows then that the divisor $- p^{e} (\pi^*(K_X + \Delta) + t G) - D $ is also relatively ample (since $-\pi^* (K_X + \Delta)$ and $-G$ are both relatively nef).
\end{remark}

\begin{remark}[Sufficiently divisible $e > 0$ and $(p^e - 1)$]
\label{rem.TauDivisibleE}
One can vary the setup of \autoref{prop.TauDescriptionViaBlowup} in a number of different ways.  For example, in the notation found therein, for each $e$ and $D$ set
\[
\sM^{X,\Delta,\ba^{t}}_{e, \pi, D} = \O_{Y}( \lceil K_{Y} - (p^e - 1)(\pi^{*}(K_{X}+\Delta) + t G)\rceil - D).
\]
It is easy to see that $\sL^{X,\Delta,\ba^{t}}_{e, \pi, D}$ may be replaced with $\sM^{X,\Delta,\ba^{t}}_{e, \pi, D}$ throughout by absorbing the difference into $D$.
Furthermore, additionally suppose now that the index of $K_X + \Delta$ is not divisible by $p$.
One may then also easily show that
\begin{equation}
\label{eq.TauByDivisibleEAndMinus1}
\tau(X, \Delta, \ba^t) = \bigcap_{D} \bigcap_{e_0 \geq 0} \left( \sum_{\substack{e = l e_0}} \Tr^{e}\left( F^e_* \pi_*\sM^{X,\Delta,\ba^{t}}_{e, \pi, D}   \right)\right).
\end{equation}
The point here is that this description concerns only sufficiently divisible values of $e$.  To see that this is possible, simply observe in this case that \autoref{eq.TauClassicalSum} holds for a sum over sufficiently divisible $e$ (for example, by \cite[Proposition 3.9]{BlickleSchwedeTakagiZhang}) and repeat the arguments of the \autoref{prop.TauDescriptionViaBlowup} for such divisible $e$.

One advantage of \eqref{eq.TauByDivisibleEAndMinus1} is that since $p$ does not divide the index of $K_X + \Delta$, and if furthermore $t$ is rational and $p$ does not appear in its denominator, one can arrange that $(1-p^e)(K_X + \Delta + tG)$ is integral for all sufficiently divisible $e$ and so the roundings in the definition of $\sM^{X,\Delta,\ba^{t}}_{e, \pi, D}$ are unnecessary.
\end{remark}


Next we observe that Keeler's relative version of Fujita's vanishing theorem, \cite{KeelerAmpleFiltersOfInvertibleSheaves,FujitaVanishingTheoremsForSemiPositive} gives us the following relative vanishing theorem for test ideals.

\begin{proposition}[Relative vanishing for test ideals]
\label{prop.RelativeVanishingForTau}
In the notation of Proposition \ref{prop.TauDescriptionViaBlowup}, assume further that $\pi : Y \to X$ is projective. Then there exists a divisor $D \geq 0$ on $Y$ and an $e_0 \geq 0$ such that
\[
\tau(X, \Delta, \ba^t) = \sum_{e \geq e_0} \Tr^e\Big( F^e_* \pi_* \big(\O_Y(\lceil K_Y - p^e \pi^*(K_X + \Delta + tG) - D \rceil) \big)  \Big)
\]
and also such that $R^i \pi_* \big(\O_Y(\lceil K_Y - p^e \pi^*(K_X + \Delta + tG) - D \rceil)\big) = 0$ for all $i > 0$ and all $e \geq e_0$.
\end{proposition}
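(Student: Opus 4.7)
The plan is to combine the test ideal description in Proposition~\ref{prop.TauDescriptionViaBlowup} with Keeler's relative version of Fujita's vanishing theorem \cite[Theorem~1.5]{KeelerAmpleFiltersOfInvertibleSheaves}. Proposition~\ref{prop.TauDescriptionViaBlowup} already furnishes an effective divisor $D_0$ on $Y$ and an integer $e_0^{(1)} \geq 0$ for which the test ideal description holds, with the equality persisting after enlarging either $D_0$ or $e_0^{(1)}$. It therefore suffices to identify a possibly larger $D \geq D_0$ and $e_0 \geq e_0^{(1)}$ for which the relative cohomology vanishing in (b) also holds for all $e \geq e_0$ and $i > 0$.

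The key observation is that the $\bQ$-Cartier divisor $-(\pi^*(K_X + \Delta) + tG)$ is $\pi$-nef: $\pi^*(K_X + \Delta)$ is $\pi$-numerically trivial as the pullback of a $\bQ$-Cartier divisor from $X$, while $-G$ is $\pi$-nef because $Y$ dominates the normalized blowup $\widetilde{X} \to X$ of $\ba$, along which $-G$ pulls back from the $\widetilde{\pi}$-ample tautological invertible sheaf. Combined with a sufficiently $\pi$-ample auxiliary effective Cartier divisor on $Y$ (which exists as $\pi$ is projective), this produces a $\pi$-ample filter of line bundles in Keeler's sense. To reduce to integer Cartier divisors, I would fix a positive integer $q$ such that both $q(K_X + \Delta)$ and $qtG$ are Cartier (the irrational $t$ case being handled by an $\bR$-Cartier analog of Fujita vanishing, or by approximating $t$ from above via right continuity of the test ideal). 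The rounding discrepancy $\lceil -p^e(\pi^*(K_X+\Delta)+tG)\rceil + p^e(\pi^*(K_X+\Delta)+tG)$ depends only on the residue class of $p^e$ modulo $q$ and hence takes only finitely many values; fix an effective divisor $B$ dominating all such discrepancies coefficient-wise and absorb $B$ into $D$.

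With $D$ enlarged in this way, the desired vanishing of $R^i\pi_*$ reduces to a uniform vanishing statement for tensor products of the form $\sF_r \otimes \O_Y(-mq(\pi^*(K_X+\Delta)+tG))$ as $m \to \infty$, where $\sF_r$ ranges over the finitely many fixed coherent sheaves indexed by the residue class $r$ of $p^e$ modulo $q$. Keeler's theorem, applied to the $\pi$-ample filter generated by the $\pi$-nef line bundle $\O_Y(-q(\pi^*(K_X+\Delta)+tG))$ together with the auxiliary $\pi$-ample divisor, then yields a uniform threshold $m_0$ beyond which the vanishing holds for all residues; choosing $e_0$ so that $(p^{e_0}-q+1)/q \geq m_0$ completes the argument. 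The main obstacle I anticipate is the careful bookkeeping of the ceiling operation and of the finitely many residue classes, ensuring that a single pair $(D, e_0)$ works uniformly for both the test ideal equality and the cohomology vanishing across all residues modulo $q$.
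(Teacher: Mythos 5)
Your proposal follows essentially the same line of argument as the paper: decompose $\lceil K_Y - p^e\pi^*(K_X+\Delta+tG) - D\rceil$ into a Cartier $\pi$-nef part plus one of finitely many fixed residual divisors, choose $D$ using an effective $\pi$-anti-ample auxiliary divisor so as to make the whole thing fit into the setting of Keeler's relative Fujita vanishing, and absorb the resulting threshold $m_0$ into the choice of $e_0$.

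However, there is a real gap in your treatment of irrational $t$, and your two proposed repairs do not obviously close it. Fixing a $q$ with $qtG$ Cartier forces $t$ rational (as $G$ is already an integral Cartier divisor, this means $qt \in \bZ$). Approximating $t$ from above by rational $s$ with $\tau(X,\Delta,\ba^t)=\tau(X,\Delta,\ba^s)$ does not directly give the statement for $t$, since $\lceil K_Y - p^e\pi^*(K_X+\Delta+sG)-D\rceil$ and $\lceil K_Y - p^e\pi^*(K_X+\Delta+tG)-D\rceil$ diverge by roughly $p^e(s-t)G$ as $e\to\infty$, so the vanishing of one does not yield the vanishing of the other. As for an $\bR$-Cartier Fujita vanishing, this is not something Keeler proves and is not needed. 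The paper sidesteps this entirely by parametrizing the residuals without any rationality hypothesis: it observes that the set
\[
\mathcal{Q} = \big\{ \lfloor i(\pi^*(K_X + \Delta) + tG) \rfloor - \lfloor i/n \rfloor n \pi^*(K_X + \Delta)  - \lfloor t i \rfloor G \;\big|\; i \in \bN \big\},
\]
where $n$ is the Cartier index of $K_X+\Delta$, consists of integral divisors with \emph{bounded} coefficients supported on a fixed finite set of primes, hence is finite, and this bound requires only that $n(K_X+\Delta)$ be Cartier and $G$ be an integral divisor -- $t$ can be any real number. If you replace your ``residues mod $q$'' device with this finite set of divisors, your argument matches the paper's and covers the general case.
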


\begin{proof}
The statement is local, so we may assume that $X$ is affine.  Fix $D'$ to be any effective $\pi$-antiample divisor.  Suppose that $n(K_X + \Delta)$ is Cartier for some integer $n$.  Next consider the set of divisors on $Y$
\[
\mathcal{Q} = \big\{ \lfloor i(\pi^*(K_X + \Delta) + tG) \rfloor - \lfloor i/n \rfloor n \pi^*(K_X + \Delta)  - \lfloor t i \rfloor G \;\big|\; i \in \bN \big\}.
\]
Note that the divisors in this set are integral, with bounded coefficients, and are supported on a finite set of prime divisors.  Hence $\mathcal{Q}$ is a finite set of integral divisors, $\mathcal{Q} = \{ F_1, \ldots, F_d \}$.

Set
\[
\sF = \bigoplus_{i = 1}^d \O_Y( K_Y - F_i ) = \bigoplus_{i = 1}^d \sF_i.
\]
By relative Fujtita vanishing \cite[Theorem 1.5]{KeelerAmpleFiltersOfInvertibleSheaves},
there exists an $m_0 > 0$ such that
\[
R^i \pi_* \left( \sF \tensor \O_Y(-mD') \tensor \sN \right)= 0
\]
for any $m \geq m_0$, $i > 0$, and $\pi$-nef invertible sheaf $\sN$.   Thus the same vanishing also holds for the summands $\sF_i$.  By choosing a sufficiently large effective Cartier divisor $B$ on $X$, we may assume $D = mD' + \pi^* B$ satisfies the stabilization conditions in Proposition \ref{prop.TauDescriptionViaBlowup} for some $e_0 \geq 0$.  This gives the first equality, and we need only prove the vanishing statements as well.

But note that, for any $e \geq 0$, we have
\[
\begin{array}{rl}
& \O_Y(\lceil K_Y - p^e (\pi^*(K_X + \Delta) + tG) -  D \rceil) \\
= & \O_Y(K_Y - \lfloor p^e (\pi^*(K_X + \Delta) + tG) \rfloor - D)\\
= & \O_Y(K_Y - F_j - \lfloor p^e /n \rfloor n \pi^*(K_X - \Delta) - \lfloor t p^e \rfloor G )
\end{array}
\]
where $F_j$ is the element of $\mathcal{Q}$ corresponding to $i = p^e$.  But now the higher cohomologies vanish as $G$ is relatively antiample and $\pi^*(K_X - \Delta)$ is relatively antinef.
\end{proof}

\begin{remark}
In a previous draft of this paper, we included another variant on the definition of the test ideal where the coefficient of $D$ was $p^e \varepsilon$ for some $1 \gg \varepsilon > 0$.  We have since been successful at rendering this complication unnecessary.
\end{remark}

\section{Test ideals vs parameter test modules}
\label{sec.TestVsParam}

This short section shows that test ideals of pairs are equal to parameter test modules, in the sense of \cite{SmithTestIdeals}, of slightly different pairs.  This is useful because the parameter test module behaves very naturally in certain change-of-variety operations (including alterations).  Hence we will use the parameter test modules in the sections that follow wherever we believe that it conceptually simplifies certain arguments.  This perspective was used previously in \cite{BlickleSchwedeTuckerTestAlterations} and \cite{SchwedeTuckerZhang}.

We begin with definitions.

\begin{definition}[Parameter test modules for rings and effective divisors]
\label{def.ParameterTestSubmodules}
Suppose that $X = \Spec R$ is a normal integral $F$-finite affine scheme, $\Gamma$ is an effective $\bQ$-divisor, $\ba \subseteq R$ is an ideal and $t \geq 0$ is a rational number.

Then we define the \emph{parameter test module}, denoted $\tau(\omega_X, \Gamma, \ba^{t})$, to be the unique smallest nonzero submodule of $J \subseteq \omega_X$ such that
for every $e \geq 0$ and every map
\[
\phi \in \Hom_{\O_X}\big(F^e_* (\omega_X( \lceil (p^e - 1) \Gamma \rceil)), \omega_X\big) \subseteq \Hom_{\O_X}(F^e_* \omega_X, \omega_X) = \langle \Tr^e\rangle_{ F^e_* \O_X} \cong F^e_* \O_X
\]
 we have that
\[
\phi\big(F^e_* ( \ba^{\lceil t(p^e - 1) \rceil}\cdot J)\big) \subseteq J.
\]
If $\Gamma = 0$ or $\ba = R$, we omit those terms from the notation and write $\tau(\omega_X,\ba^{t})$ or $\tau(\omega_X, \Gamma)$.
\end{definition}

Let us now prove that this submodule exists by proving that it coincides with the test ideal of an appropriate pair.

\begin{lemma}
\label{lem.ParamTestModAreTestIdeals}
With notation as in \autoref{def.ParameterTestSubmodules}, still in the affine case, choose $K_X$ such that $- K_X$ is effective and fix $\omega_X = \O_X(K_X)$ to be the corresponding submodule of $K(X)$, then
\[
\tau(\omega_X, \Gamma, \ba^{t}) = \tau(X, \Gamma-K_X, \ba^{t})
\]
as a submodule of $K(X)$ or equivalently setting $\Theta = \Gamma -K_X$,
\[
\tau(\omega_X, \Theta + K_X, \ba^t) = \tau(X, \Theta, \ba^t).
\]
In particular, the parameter test module exists.
\end{lemma}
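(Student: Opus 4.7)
The plan is to show that the two defining conditions---smallest nonzero submodule of $\omega_X$ satisfying the parameter condition vs.\ smallest nonzero ideal of $\O_X$ satisfying the test condition---coincide when restricted to $\O_X$-submodules $J \subseteq \omega_X \subseteq \O_X$, and then deduce equality by minimality. Since $-K_X$ is effective, $\omega_X = \O_X(K_X) \subseteq \O_X$ is an honest ideal, so every $\O_X$-submodule $J \subseteq \omega_X$ is simultaneously an ideal of $\O_X$. The goal becomes: for such $J$, the parameter condition and the test condition agree.

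The core of the argument is Grothendieck duality for Frobenius. Using $F^! \omega_X \cong \omega_X$ from \autoref{conv.MainConvention}, standard manipulations yield canonical $F^e_* \O_X$-module isomorphisms
\[
\Hom_{\O_X}\big(F^e_* \omega_X(\lceil (p^e-1)\Gamma \rceil), \omega_X\big) \cong F^e_* \O_X(-\lceil (p^e-1)\Gamma \rceil) \cong \Hom_{\O_X}\big(F^e_* \O_X(\lceil (p^e-1)\Delta \rceil), \O_X\big),
\]
with both identifications given uniformly by $z \mapsto \Tr^e(z \cdot \blank)$: a section $z$ of the middle sheaf yields a ``multiplication-then-trace'' map, and the divisor bounds on the two ends are exactly those needed to guarantee $\O_X$-linearity with the prescribed target. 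I would verify this concretely at the function-field level, where all three modules embed into $\Hom_{K(X)}(F^e_* K(X), K(X)) = F^e_* K(X) \cdot \Tr^e$, and the $z$'s that give $\O_X$-linear maps into $\omega_X$ or $\O_X$ are seen to be the same subsheaf of $F^e_* K(X)$.

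Granting this matched identification, the induced bijection $\phi \leftrightarrow \psi$ has the property that $\phi$ and $\psi$ act identically on any element of $F^e_* K(X)$ on which both are defined---in particular on $F^e_*(\ba^{\lceil t(p^e-1)\rceil} J)$ whenever $J \subseteq \omega_X$. Under our identification $\omega_X = \O_X(K_X)$, such a $J$ is literally the same subset of $K(X)$ whether viewed as a submodule of $\omega_X$ or as an ideal of $\O_X$, so the two containment conditions ``$\phi(F^e_* \ba^{\lceil t(p^e-1)\rceil} J) \subseteq J$'' and ``$\psi(F^e_* \ba^{\lceil t(p^e-1)\rceil} J) \subseteq J$'' become identical.

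Existence of $\tau(X, \Delta, \ba^t)$ is standard (via the sum description against test elements), so I would finish as follows. Applying the equivalence to $J = \omega_X$ shows that $\omega_X$ itself satisfies the test-ideal condition, whence $\tau(X, \Delta, \ba^t) \subseteq \omega_X$ by minimality; thus $\tau(X, \Delta, \ba^t)$ is actually a submodule of $\omega_X$ satisfying the parameter condition, and any other such submodule, viewed as an ideal of $\O_X$, satisfies the test condition and hence contains $\tau(X, \Delta, \ba^t)$. This identifies $\tau(X, \Delta, \ba^t)$ with the minimum that defines $\tau(\omega_X, \Gamma, \ba^t)$, simultaneously proving that $\tau(\omega_X, \Gamma, \ba^t)$ exists. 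The main obstacle I anticipate is the second paragraph: carefully establishing the uniform ``multiplication-then-trace'' description so that matched pairs $\phi$ and $\psi$ really do coincide on common inputs of $F^e_* K(X)$; once this is in place the rest is formal.
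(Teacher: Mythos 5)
Your proposal is correct and follows essentially the same route as the paper: you identify the two defining Hom-modules $\Hom_{\O_X}(F^e_*\omega_X(\lceil(p^e-1)\Gamma\rceil),\omega_X)$ and $\Hom_{\O_X}(F^e_*\O_X(\lceil(p^e-1)(\Gamma-K_X)\rceil),\O_X)$ via $\omega_X = \O_X(K_X)$ and duality for Frobenius, observe they act identically on any $J\subseteq\omega_X$, and then conclude by minimality. The only (harmless) variation is the inclusion $\tau(X,\Gamma-K_X,\ba^t)\subseteq\omega_X$: the paper verifies this directly in codimension~$1$ using reflexivity of $\omega_X$, whereas you show $\omega_X$ itself satisfies the test-ideal condition and invoke minimality; both work.
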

\begin{proof}
Since $-K_X$ is effective and integral, we see that $J = \tau(X, \Gamma-K_X, \ba^{t})$ is a submodule of $\omega_X = \O_X(K_X) \subseteq \O_X$ (it is easy to see that the containment holds in codimension 1 which is sufficient since $\omega_X$ is reflexive).  On the other hand, observe that
\[
\begin{array}{rl}
& \Hom_{\O_X}\big(F^e_* (\omega_X( \lceil (p^e - 1) \Gamma \rceil)), \omega_X\big) \\
\cong & \Hom_{\O_X}\big(F^e_* (\O_X(K_X + \lceil (p^e - 1) \Gamma \rceil)) \tensor_{\O_X} \O_X(-K_X), \O_X\big) \\
\cong & \Hom_{\O_X}\big(F^e_* (\O_X( \lceil (p^e - 1) (\Gamma-K_X) \rceil)), \O_X\big).
\end{array}
\]
It follows that the maps which determine the parameter test module in \autoref{def.ParameterTestSubmodules} are exactly the maps used to define the test ideal in \autoref{def.TauDefinition}.  But now both the test ideal and test module are contained inside $\omega_X \subseteq \O_X$ and both are defined to be the smallest such submodule satisfying the same property.  Hence they are equal and the lemma is proven.
\end{proof}

Since $\tau(\omega_X, \Gamma, \ba^{t}) = \tau(X, \Gamma-K_X, \ba^{t})$, we can conclude numerous properties about the parameter test module automatically.  For example, the formation of the parameter test module automatically commutes with localization.    Likewise
\begin{equation}
\label{eq.TauOmegaUnchangedBySmallEpsilon}
\tau(\omega_X, \Gamma + \varepsilon D, \ba^t) = \tau(\omega_X, \Gamma, \ba^t)
\end{equation}
for all $1 \gg \varepsilon > 0$ and effective Cartier divisors $D > 0$, see \cite[Lemma 3.23]{BlickleSchwedeTakagiZhang}.

Another such property is the fact that if $D$ is any Cartier divisor on $X$, then
\begin{equation}
\label{eq.TauOmegaPrincipalSkoda}
\begin{array}{rl}
& \tau(\omega_X, D + \Gamma, \ba^t) \\
= & \tau(X, D+\Gamma-K_X, \ba^t)\\
 =&  \tau(X, \Gamma-K_X, \ba^t) \tensor \O_X(-D) \\
 = & \tau(\omega_X, \Gamma, \ba^t) \tensor \O_X(-D).
 \end{array}
\end{equation}
From this it follows that we can define the parameter test module for non-effective $\Gamma$ as follows.

\begin{definition} [Parameter test modules in general]
\label{def.ParameterTestSubmodulesNonEffective}
Working in the setting of \autoref{def.ParameterTestSubmodules} but without assuming that $\Gamma$ is effective, choose $D$ a Cartier divisor (working locally if necessary) such that $\Gamma+D$ is effective.  Then we define the parameter test module by the following formula:
\[
\tau(\omega_X, \Gamma, \ba^t) = \tau(\omega_X, \Gamma + D, \ba^t) \otimes \O_X(D).
\]
Note that the parameter test module is a fractional ideal once we fix $\omega_X \subseteq K(X)$.
It is easy to see that this is independent of the choice of $D$.
\end{definition}

Now in the non-affine setting, since the formation of the parameter test module commutes with localization, we can define the parameter test module of an \emph{arbitrary normal integral $F$-finite scheme} in the obvious way.

As mentioned at the start of the section, the transformation rules for parameter test modules are particularly transparent.  We fix the following notation, if $f : Y \to X$ is an alteration (for example, it could be a finite map) then $\Tr_f : f_* \omega_Y \to \omega_X$ is the Grothendieck trace, see \cite[Proposition 2.18]{BlickleSchwedeTuckerTestAlterations}.

\begin{lemma}[Transformation rules for parameter test modules]
\label{lem.TransformationRulesForParameterTestModules}
Suppose that $X$ is a integral normal $F$-finite scheme, $\Gamma$ is a $\bQ$-divisor on $X$, $\ba$ is an ideal sheaf and $t \geq 0$ is a real number.  Then
\begin{itemize}
\item[(a)]  if $f : Y \to X$ is a finite map where $Y$ is also integral and normal then
\[
\Tr_f\big(f_* \tau(\omega_Y, f^* \Gamma, (\ba \cdot \O_Y)^t)\big) = \tau(\omega_X, \Gamma, \ba^t).
\]
\item[(b)]  in the case of \textnormal{(a)} if $f$ is the $e$-iterated Frobenius $F^e : X \to X$, then we have
\[
\Tr^e\big(F^e_* \tau(\omega_X, \Gamma, \ba^t \big) = \tau(\omega_X, {1/p^e}\Gamma, \ba^{t/p^e}).
\]
\item[(c)]  if $\Gamma$ is $\bQ$-Cartier, then there exists a finite map, or if desired when $X$ is essentially of finite type over a field, a regular alteration  $f : Y \to X$ such that
\[
\Tr_f\big(f_* \omega_Y(\lceil  -f^* \Gamma \rceil)\big) = \tau(\omega_X, \Gamma).
\]
\end{itemize}
\end{lemma}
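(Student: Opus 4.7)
My plan is to reduce each of the three parts to known statements by invoking \autoref{lem.ParamTestModAreTestIdeals}, which identifies each parameter test module with an ordinary test ideal of a shifted pair. Since the formation of parameter test modules commutes with localization, I work affine-locally throughout, and choose local representatives of $K_X$ with $-K_X$ effective so that the identification $\tau(\omega_X, \Gamma, \ba^t) = \tau(X, \Gamma - K_X, \ba^t)$ applies directly.

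For (a), translating both sides via Lemma 4.2, the identity reduces to
\[
\Tr_f\big(f_* \tau(Y, f^* \Gamma - K_Y, (\ba \O_Y)^t)\big) = \tau(X, \Gamma - K_X, \ba^t).
\]
Writing $K_Y = f^* K_X + \Ram_f$ for the ramification divisor of the finite map $f$, we have $f^* \Gamma - K_Y = f^*(\Gamma - K_X) - \Ram_f$, and the statement becomes the standard transformation rule for test ideals under finite covers, namely $\Tr_f(f_* \tau(Y, f^* \Delta - \Ram_f, (\ba \O_Y)^t)) = \tau(X, \Delta, \ba^t)$ applied with $\Delta = \Gamma - K_X$. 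This rule is essentially a consequence of Grothendieck duality for finite maps: every $\phi \in \Hom_{\O_X}(F^e_* \O_X(\lceil (p^e-1)\Delta \rceil), \O_X)$ factors through $Y$ as a trace composition, so the defining closure conditions on $X$ and $Y$ match up (compare with the related identifications in \cite{BlickleSchwedeTakagiZhang}).

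For (b), I specialize (a) to the Frobenius $f = F^e \colon X \to X$. Under $F^e$ one has $F^{e*} \Gamma' = p^e \Gamma'$ for any $\bQ$-divisor $\Gamma'$, and $\ba' \cdot \O_{X,\mathrm{source}} = (\ba')^{[p^e]}$, so (a) yields
\[
\Tr^e\big(F^e_* \tau(\omega_X, p^e \Gamma', ((\ba')^{[p^e]})^{t'})\big) = \tau(\omega_X, \Gamma', (\ba')^{t'}).
\]
Setting $\Gamma' = \Gamma/p^e$, $\ba' = \ba$, and $t' = t/p^e$ produces $\Tr^e(F^e_* \tau(\omega_X, \Gamma, (\ba^{[p^e]})^{t/p^e})) = \tau(\omega_X, \Gamma/p^e, \ba^{t/p^e})$. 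It then remains to verify $\tau(\omega_X, \Gamma, (\ba^{[p^e]})^{t/p^e}) = \tau(\omega_X, \Gamma, \ba^t)$, which follows because $(\ba^{[p^e]})^k$ and $\ba^{p^e k}$ share the same integral closure for every $k \geq 0$, together with the insensitivity of the parameter test module to replacing its ideal symbol by its integral closure (in the spirit of the tight-closure Brian{\c c}on--Skoda theorem used in the proof of \autoref{prop.TauDescriptionViaBlowup}).

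For (c), with $\Gamma$ a $\bQ$-Cartier divisor, I invoke the main theorem of \cite{BlickleSchwedeTuckerTestAlterations} as a black box: for any $\bQ$-divisor $\Delta$ making $K_X + \Delta$ be $\bQ$-Cartier, there exists a finite map or---when $X$ is essentially of finite type over a perfect field---a regular alteration $f \colon Y \to X$ such that $\Tr_f(f_* \O_Y(\lceil K_Y - f^*(K_X + \Delta)\rceil)) = \tau(X, \Delta)$. Setting $\Gamma = K_X + \Delta$ and rewriting via Lemma 4.2 gives precisely $\Tr_f(f_* \omega_Y(\lceil -f^* \Gamma \rceil)) = \tau(\omega_X, \Gamma)$. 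The main conceptual obstacle lies exactly in this step, as it packages the content of the alterations theorem; by comparison, the identification of ideal symbols in (b) is a small but routine check, and (a) is a direct consequence of the standard finite-cover transformation rule for test ideals.
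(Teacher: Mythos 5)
Your overall strategy matches the paper's: reduce each part to known transformation rules for test ideals via \autoref{lem.ParamTestModAreTestIdeals}, cite \cite{SchwedeTuckerTestIdealFiniteMaps} for finite covers, derive (b) from (a) via Frobenius together with the fact that $\ba^{[p^e]}$ and $\ba^{p^e}$ share an integral closure, and cite \cite{BlickleSchwedeTuckerTestAlterations} for (c). Parts (b) and (c) as you present them are fine, and the translation in (c) is exactly the paper's.

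However, there is a real imprecision in your proof of (a) that the paper is specifically careful to avoid. You write ``$K_Y = f^* K_X + \Ram_f$ for the ramification divisor of the finite map $f$,'' but for an arbitrary finite morphism of normal $F$-finite schemes in characteristic $p$ this formula does not hold for the classical (different-based) ramification divisor --- and in the inseparable case (which you then feed into (b) by taking $f = F^e$!) there is no ramification divisor in the usual sense at all, since there is no field-theoretic trace. What \cite{SchwedeTuckerTestIdealFiniteMaps} proves is a transformation rule phrased in terms of a fixed nonzero $K(X)$-linear map $\Tt : K(Y) \to K(X)$ and a divisor $R_{\Tt}$ determined by $\Tt$ and choices of $K_X, K_Y$. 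The paper's point --- ``the ramification divisor, or more generally what is denoted there by $R_{\Tt}$, is built into $\omega_X$, $\omega_Y$ and the Grothendieck trace'' --- is that once you fix $\omega_X \subseteq K(X)$, $\omega_Y \subseteq K(Y)$, and take $\Tt$ to be the generic localization of the Grothendieck trace $\Tr_f$, the correct divisor is automatically $K_Y - f^*K_X$ for those choices, and so the translation requires no explicit appeal to a ramification formula. Your step is repairable by interpreting $\Ram_f$ as $R_{\Tt}$ for this $\Tt$, but as written it is not correct for the generality you need (and will certainly not survive the specialization to $f = F^e$ in (b)). Also, the reference you should cite for the transformation rule under finite maps is \cite{SchwedeTuckerTestIdealFiniteMaps}, not \cite{BlickleSchwedeTakagiZhang}; the latter is relevant for other parts of the theory but does not contain the finite-cover main theorem you need.

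One smaller comment on (b): the assertion $\overline{\ba^{[p^e]}} = \overline{\ba^{p^e}}$ is best justified directly (each generator $g_1 \cdots g_{p^e}$ of $\ba^{p^e}$ satisfies $(g_1 \cdots g_{p^e})^{p^e} = \prod g_i^{p^e} \in (\ba^{[p^e]})^{p^e}$) rather than attributed to the Brian\c con--Skoda theorem, which is a related but different statement; together with the invariance of (parameter) test modules under replacing $\ba$ by its integral closure, this gives what you need.
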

\begin{proof}
Part (a) is simply the main result of \cite{SchwedeTuckerTestIdealFiniteMaps} translated into parameter test modules.  We briefly explain how this is done using the notation of that paper. The idea is as follows, the ramification divisor, or more generally what is denoted there by $R_\Tt$, is built into $\omega_X$, $\omega_Y$ and the Grothendieck trace.  Indeed if one chooses any embeddings $\omega_X \subseteq K(X)$ and $\omega_Y \subseteq K(Y)$, then localizing the Grothendieck trace at the generic point induces a map $\Tt : K(Y) \to K(X)$.  This is the map to be used in \cite[Main Theorem (General Case)]{SchwedeTuckerTestIdealFiniteMaps}.

Part (b) is merely a special case of (a) once one notices the following.  The ideal $\ba \cdot F^e_* \O_X = F^e_* \ba^{[p^e]}$ has the same integral closure as $F^e_* \ba^{p^e}$ and so either yield the same test ideal, and thus parameter test module by \autoref{lem.ParamTestModAreTestIdeals}, also see \cite[Proposition 4.4]{BlickleSchwedeTuckerTestAlterations}.

Part (c) is the main result of \cite{BlickleSchwedeTuckerTestAlterations} stated in terms of parameter test modules.
\end{proof}

In this paper, we will exclusively deal with the case that $\Gamma$ is a $\bQ$-Cartier divisor.  Frequently, we will also assume that the index of $\Gamma$ is not divisible by $p$.  In such a case we have the following which we highlight because it will appear several times.

\begin{corollary}
\label{cor.SurjectionOfTraceOnTauOmega}
With notation as above, suppose that $\Gamma$ is a $\bQ$-divisor such that $(p^e - 1)\Gamma$ is Cartier for some integer $e > 0$.  Then
\[
\Tr^e\big(F^e_* \tau(\omega_X, \Gamma) \otimes \O_X( (1-p^e)\Gamma) \big) = \tau(\omega_X, \Gamma).
\]
\end{corollary}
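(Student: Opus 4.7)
The plan is to deduce the corollary formally from the two preceding results by combining the Frobenius transformation rule \autoref{lem.TransformationRulesForParameterTestModules}(b) with the twisting identity \autoref{eq.TauOmegaPrincipalSkoda}. The hypothesis that $(p^e-1)\Gamma$ is Cartier is exactly what allows us to apply the twisting identity.

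First, I would rewrite the left-hand side by absorbing the tensor twist into the coefficient of the boundary divisor. Setting $D = (p^e-1)\Gamma$, which is a Cartier divisor by assumption, the identity \autoref{eq.TauOmegaPrincipalSkoda} (applied with $\Gamma$ in place of its own $\Gamma$ and with $\ba = \O_X$, $t = 0$) gives
\[
\tau(\omega_X, p^e\Gamma) \;=\; \tau(\omega_X, (p^e-1)\Gamma + \Gamma) \;=\; \tau(\omega_X, \Gamma) \tensor \O_X(-(p^e-1)\Gamma) \;=\; \tau(\omega_X, \Gamma) \tensor \O_X((1-p^e)\Gamma).
\]
Thus the sheaf whose $F^e_*$-pushforward we are tracing is exactly $\tau(\omega_X, p^e \Gamma)$.

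Second, I would apply \autoref{lem.TransformationRulesForParameterTestModules}(b), which for the Frobenius iterate $F^e$ says
\[
\Tr^e\bigl(F^e_* \tau(\omega_X, \Theta)\bigr) \;=\; \tau(\omega_X, \Theta/p^e)
\]
for any $\bQ$-divisor $\Theta$ (with trivial ideal sheaf). Specializing to $\Theta = p^e \Gamma$ gives
\[
\Tr^e\bigl(F^e_* \tau(\omega_X, p^e \Gamma)\bigr) \;=\; \tau(\omega_X, \Gamma),
\]
and substituting the identification from the previous paragraph yields exactly the desired equality.

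There is no real obstacle here beyond bookkeeping: the argument is purely formal once the twisting identity \autoref{eq.TauOmegaPrincipalSkoda} is in hand (and its extension to non-effective boundaries via \autoref{def.ParameterTestSubmodulesNonEffective} is what makes the intermediate expression $\tau(\omega_X, p^e\Gamma)$ legitimate even when $\Gamma$ itself is not effective). The only point worth a sentence of comment in the write-up is the verification that the Cartier hypothesis on $(p^e-1)\Gamma$ is precisely what is needed so that the twist $\O_X((1-p^e)\Gamma)$ appearing in the statement is a genuine invertible sheaf, matching the Cartier divisor $D$ in the twisting formula.
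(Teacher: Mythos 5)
Your proof is correct and is essentially the same argument as the paper's own: the paper absorbs the twist into the coefficient via \autoref{eq.TauOmegaPrincipalSkoda} (writing $\tau(\omega_X,\Gamma)\otimes\O_X((1-p^e)\Gamma)=\tau(\omega_X,\Gamma-(1-p^e)\Gamma)=\tau(\omega_X,p^e\Gamma)$) and then applies \autoref{lem.TransformationRulesForParameterTestModules}(b), exactly as you do. Your write-up just makes the invocation of the two lemmas, and the role of the Cartier hypothesis, slightly more explicit.
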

\begin{proof}
Simply observe that
\[
\begin{array}{rl}
& \Tr^e\big(F^e_*( \tau(\omega_X, \Gamma) \otimes \O_X( (1-p^e)\Gamma) )\big) \\
= & \Tr^e\big(F^e_* \tau(\omega_X, \Gamma - (1-p^e)\Gamma)  \big)\\
= & \Tr^e\big(F^e_* \tau(\omega_X, p^e \Gamma ) \big)\\
= & \tau(\omega_X, \Gamma)
\end{array}
\]
where the final equality comes from \autoref{lem.TransformationRulesForParameterTestModules}(b).
\end{proof}

\section{Effective computation of test ideals}

Let $(X, \Delta, \ba^{t})$ be a log-$\Q$-Gorenstein triple.
 In this section of the paper, we describe an algorithm for computing $\tau(X, \Delta, \ba^t)$ that could in principal be implemented in a computer.  Working exclusively on the normalized blowup $\pi : Y \to X$ of $\ba$, roughly speaking the key point is to incorporate the parameter test sheaf \cite{SmithTestIdeals} so as to remove both $D$ and $\sum_{e \geq e_{0}}$ from the description of the test ideal in \autoref{prop.TauDescriptionViaBlowup}.
%
%
We utilize the parameter test module heavily in this section, see \autoref{sec.TestVsParam} for discussion.

\begin{theorem}[Effective test ideal computation]
\label{thm.EffectiveTauComputationGeneral1}
Let $(X, \Delta, \ba^t)$ be a log-$\Q$-Gorenstein triple, and write $t = a/p^b (p^c-1)$ for some integers $a,b,c > 0$ where additionally we have that $p^b(p^c-1)(K_X + \Delta)$ is Cartier.   Set $\pi : Y \to X$ to be the normalized blowup of $\ba$ with $\O_Y(-G) = \ba \cdot \O_Y$.  Let $\mathcal{N}$ denote the kernel of the natural map
\[
\begin{array}{rrl}
& F^c_* \big(\tau(\omega_Y, p^b\pi^*(K_X + \Delta) + p^b t G) \otimes \O_Y( (1-p^c)p^b(\pi^*(K_X + \Delta) + tG)) \big) & \\
& \cong F^c_* \big(\tau(\omega_Y, p^c p^b\pi^*(K_X + \Delta) + p^c p^b t G) & \\
& \twoheadrightarrow\tau(\omega_Y, p^b\pi^*(K_X + \Delta) + p^b tG) & \\
\end{array}
\]
induced by the trace map $F^c_* \omega_Y \to \omega_Y$, the surjectivity follows from \autoref{cor.SurjectionOfTraceOnTauOmega}.
Fix $e_1 = mc > 0$ a positive multiple of $c$ such that
\begin{equation}
\label{eq.VanishingEffectiveTauComputationGeneral1}
R^1 \pi_* (\mathcal{N} \tensor \O_Y(-(p^e - 1)p^b tG)) = 0
\end{equation}
for all $e = e_1 + nc$ where $n$ runs over all integers $\geq 0$ (which possible since $-G$ is $\pi$-ample).
Then for all such $e$
\[
\begin{array}{rl}
  & \tau(X, \Delta, \ba^t) \\
= &  \tau(\omega_X, K_X + \Delta, \ba^t) \\
= & \Tr^{e+b}\Big( \pi_* F^{e+b}_* \big( \tau(\omega_Y, p^b\pi^*(K_X + \Delta) + p^btG) \otimes \O_Y(p^b(1-p^e)(\pi^*(K_X + \Delta) + tG)) \big)\Big).
\end{array}
\]
\end{theorem}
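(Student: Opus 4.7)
My plan mimics the smooth-case argument of \autoref{thm.EasyEffectiveComputation}, translated into the parameter test module framework of \autoref{sec.TestVsParam} via \autoref{lem.ParamTestModAreTestIdeals}.  Set $\sigma := \pi^*(K_X+\Delta) + tG$ and write $J_e$ for the right-hand side of the claimed equality.  The assumptions on $t$ and on the index of $K_X+\Delta$ make $(p^c-1)p^b\sigma$ Cartier, and hence $(p^e-1)p^b\sigma$ Cartier for every multiple $e$ of $c$; for such $e$, \eqref{eq.TauOmegaPrincipalSkoda} identifies $\tau(\omega_Y,p^b\sigma)\otimes\O_Y(-(p^e-1)p^b\sigma) = \tau(\omega_Y,p^{e+b}\sigma)$, and the surjection defining $\mathcal{N}$ is the instance of \autoref{cor.SurjectionOfTraceOnTauOmega} with $\Gamma = p^b\sigma$.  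The argument will be in two parts: a \emph{stabilization} $J_{e+c} = J_e$ along the progression $e = e_1 + nc$, and an \emph{identification} $J_{e_1} = \tau(X,\Delta,\ba^t)$.

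For stabilization, I twist the defining sequence of $\mathcal{N}$,
\[
0 \to \mathcal{N} \to F^c_*\bigl(\tau(\omega_Y,p^b\sigma)\otimes \O_Y((1-p^c)p^b\sigma)\bigr) \xrightarrow{\Tr^c} \tau(\omega_Y,p^b\sigma) \to 0,
\]
by $\O_Y(-(p^e-1)p^b\sigma)$, combining the twists in the middle term via the projection formula $F^c_*(M)\otimes L \cong F^c_*(M\otimes F^{c*}L)$ to obtain $F^c_*\bigl(\tau(\omega_Y,p^b\sigma)\otimes\O_Y(-(p^{e+c}-1)p^b\sigma)\bigr)$.  Since $-(p^e-1)p^b\pi^*(K_X+\Delta)$ is pulled back from $X$, the projection formula for $\pi_*$ converts the hypothesis $R^1\pi_*(\mathcal{N}\otimes \O_Y(-(p^e-1)p^btG)) = 0$ into vanishing of $R^1\pi_*$ of the full twisted kernel, so $\pi_*$ preserves right-exactness.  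Applying $F^{e+b}_*$ and passing to $\omega_X$ via the Grothendieck--Frobenius trace, the standard compatibility
\[
\Tr^{e+b+c}_X \circ F^{e+b+c}_*\Tr_\pi \;=\; \Tr^{e+b}_X \circ F^{e+b}_*\!\bigl(\Tr_\pi\circ\pi_*\Tr^c_Y\bigr)
\]
identifies the images of the middle and right terms as $J_{e+c}$ and $J_e$ respectively; surjectivity forces $J_{e+c} = J_e$, iterating to $J_{e_1 + nc} = J_{e_1}$ for all $n \geq 0$.

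For identification, I use \autoref{prop.RelativeVanishingForTau} to fix $D$ and $e_0$ with $\tau(X,\Delta,\ba^t) = \sum_{e'\geq e_0}\Tr^{e'}\bigl(F^{e'}_*\pi_*\O_Y(\lceil K_Y - p^{e'}\sigma\rceil - D)\bigr)$, then enlarge $D$ once more so as also to absorb a test-element divisor for the fixed pair $(\omega_Y,p^b\sigma)$; the Cartier twist promotes this into the uniform inclusion $\O_Y(\lceil K_Y - p^{e+b}\sigma\rceil - D) \subseteq \tau(\omega_Y,p^{e+b}\sigma)$ for every multiple $e$ of $c$.  Since \eqref{eq.TauClassicalSum} holds for \emph{any} threshold $e_0 \geq 0$, the sum in \autoref{prop.RelativeVanishingForTau} retains its value $\tau(X,\Delta,\ba^t)$ when restricted to the co-final subsequence $e' \in b + c\bZ_{\geq m}$; combined with stabilization and the uniform inclusion this yields $\tau(X,\Delta,\ba^t) \subseteq J_{e_1}$.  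For the reverse inclusion $J_{e_1} \subseteq \tau(X,\Delta,\ba^t)$, I plan to apply a Serre-vanishing substitution imitating the smooth case: by iterating \autoref{cor.SurjectionOfTraceOnTauOmega} on $Y$ and using relative vanishing on $Y$ itself, for $f$ sufficiently large and $B'$ sufficiently large one has $\tau(\omega_Y,p^{e+b}\sigma) = \Tr^{cf}_Y\!\bigl(F^{cf}_*\O_Y(\lceil K_Y - p^{cf+e+b}\sigma\rceil - B')\bigr)$, and relative Serre vanishing preserves this presentation after $\pi_*$; compatibility of traces then rewrites $J_e$ as the image $\Tr^{e+b+cf}_X\bigl(F^{e+b+cf}_*\pi_*\O_Y(\lceil K_Y - p^{cf+e+b}\sigma\rceil - B')\bigr)$, which is a single term of the raw sum in \autoref{prop.RelativeVanishingForTau} and hence contained in $\tau(X,\Delta,\ba^t)$.

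The principal technical obstacle will be coordinating the various Serre-vanishing choices of $D$ (and of $B'$ and $f$ in the reverse-inclusion step) so that one divisor simultaneously supports the raw-sum description of $\tau(X,\Delta,\ba^t)$, the uniform test-element inclusion, and the substitution argument; the Grothendieck/Frobenius trace compatibilities and projection formulas used throughout are standard features of the framework developed in \cite{BlickleSchwedeTuckerTestAlterations}.
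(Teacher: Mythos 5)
Your argument is essentially the paper's argument, reorganized.  Where the paper first reduces to $b=0$, removes the perturbation divisor $B$ via relative Serre vanishing applied to the kernel $\mathcal{K}$ of a trace surjection, observes the resulting images form a descending chain, and only then invokes the hypothesized vanishing on $\mathcal{N}$ to stabilize at $e_1$, you run stabilization (\textit{via} $\mathcal{N}$) first and then prove the two containments of the identification separately.  The key tools are identical: the projection formula to convert the hypothesis into vanishing of $R^1\pi_*$ of the full twisted kernel, compatibility of Grothendieck/Frobenius traces to identify $J_{e+c}$ with $J_e$, and Serre vanishing on $Y$ for the reverse inclusion.  One place to tighten: your justification for restricting the sum from \autoref{prop.RelativeVanishingForTau} to the arithmetic progression $e'\in b+c\bZ_{\geq m}$ --- that \eqref{eq.TauClassicalSum} ``holds for any threshold $e_0$'' --- does not by itself allow passage to a cofinal subsequence; you need that the images $\Tr^{e'}\bigl(F^{e'}_*\pi_*\O_Y(\lceil K_Y - p^{e'}\sigma\rceil - D)\bigr)$ form an ascending chain once $D$ is taken large enough (\textit{cf.} the $D\geq K_Y$ argument in \autoref{prop:alternatedescriptionsmoothspace}), or else reduce to $b=0$ as the paper does so that \autoref{rem.TauDivisibleE} directly applies.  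Similarly the reverse inclusion $J_{e_1}\subseteq\tau$ compresses several intermediate bookkeeping steps (coordinating $B'$, $f$, and the twist $-(p^e-1)p^b\sigma$) that are executed more explicitly in the paper's treatment of $\mathcal{K}$; those details need to be spelled out, but the mechanism you describe does work.
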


\begin{proof}  The question is local so we may assume that $X$ is affine.
We next observe that we may assume that $b = 0$.  Indeed, by \autoref{lem.TransformationRulesForParameterTestModules}(b)
we know that for any $\bQ$-Cartier $\bQ$-divisor $\Gamma$, any $b \geq 0$  and rational $w > 0$
\[
\begin{array}{rcl}
 \Tr^b\Big(F^b_* \tau(\omega_X, \Gamma, \bb^w)\Big) 
& = & \tau(\omega_X, {1 \over p^b} \Gamma, \bb^{w/p^b}).
\end{array}
\]
Setting $\Gamma_X = K_X + \Delta$ and working with the parameter test module $\tau(\omega_X, \Gamma_X, \ba^t)$,    we may then replace $\Gamma_X$ by $p^b \Gamma_X$ and $t$ by $p^b t$ and from this point forward assume that $b = 0$.

For compactness of notation, set $\Gamma_Y := \pi^*(K_X + \Delta) + tG$ and observe that it is $\pi$-antiample.  By \autoref{prop.TauDescriptionViaBlowup} and \autoref{rem.TauDivisibleE} we now choose an effective Cartier divisor $B$ on $Y$ such that
\begin{equation}
\label{eq.tauEqualsSumMinusF}
\begin{array}{rcl}
 \tau(X, \Delta, \ba^t) &
 = & {\displaystyle{\sum_{\substack{e \gg 0 \\ \text{divisible}\\ \text{by $c$}}} \Tr^e\Bigg(\pi_* F^e_* \Big(\tau\big(\omega_Y, \Gamma_Y\big) \otimes \O_Y\big( (1-p^e)\Gamma_Y  - B\big)\Big)\Bigg)}.}
 \end{array}
\end{equation}
Note that the term $\tau(\omega_Y, \Gamma_Y)$, instead of $K_Y$, is harmless since it is constant and so can be absorbed into the divisor $B$.

Fix a $d \gg 0$, divisible by $c$, such that by \autoref{lem.TransformationRulesForParameterTestModules}(b) and \autoref{eq.TauOmegaUnchangedBySmallEpsilon}
\[
F^d_* \Big(\tau\big(\omega_Y, \Gamma_Y\big) \otimes \O_Y\big( (1-p^d)\Gamma_Y - B\big)\Big) \xrightarrow{\Tr^d}\!\!\!\!\!\!\to \tau(\omega_Y, \Gamma_Y + {1 \over p^d}B) = \tau(\omega_Y, \Gamma_Y).
\]
Note the $\Tr^d$ above is the trace on $Y$, not on $X$.
Let $\mathcal{K}$ denote the kernel of this map.  Let $e_2$ (a multiple of $c$) be such that
\[
R^1 \pi_* \big(\mathcal{K}\tensor \O_Y( (1-p^e)\Gamma_Y)\big) = 0
\]
for all $e = lc + e_2$, $l \geq 0$.  Therefore, for those same $e = lc + e_2$, we have that
\[
\pi_* F^{e+d}_* \Big(\tau\big(\omega_Y, \Gamma_Y\big) \otimes \O_Y\big( (1-p^{e+d})\Gamma_Y - B\big)\Big) \xrightarrow{F^e_* \Tr^d} \pi_* F^e_* \Big(\tau(\omega_Y, \Gamma_Y) \otimes \O_Y( (1-p^e)\Gamma_Y) \Big)
\]
is surjective.  Thus, for all such $e = lc + e_2$, by \autoref{cor.SurjectionOfTraceOnTauOmega} and the projection formula,
\[
\begin{array}{rl}
& \Tr^{e+d}\Bigg(\pi_* F^{e+d}_* \Big(\tau\big(\omega_Y, \Gamma_Y\big) \otimes \O_Y\big( (1-p^{e+d})\Gamma_Y - B\big)\Big) \Bigg)\\
= & \Tr^e\Bigg(\pi_* F^e_* \Big(\tau(\omega_Y, \Gamma_Y) \otimes \O_Y( (1-p^e)\Gamma_Y) \Big) \Bigg).
\end{array}
\]
Applying this to \autoref{eq.tauEqualsSumMinusF}, it follows that
\begin{equation}
\label{eq.tauEqualToSumForNow}
\begin{array}{rcl}
 \tau(X, \Delta, \ba^t)
 &=&  {\displaystyle{\sum_{\substack{e \gg 0 \\  \text{divisible}\\ \text{by $c$}}} \Tr^e\Bigg(\pi_* F^e_* \Big(\tau\big(\omega_Y, \Gamma_Y\big) \otimes \O_Y\big( (1-p^e)\Gamma_Y\big)\Big)\Bigg)}}
 \end{array}
\end{equation}
the difference from \autoref{eq.tauEqualsSumMinusF} being that $-B$ no longer appears.  Notice that by \autoref{cor.SurjectionOfTraceOnTauOmega} and the projection formula, we have maps induced by applying $\pi_*$ to $\Tr^c_Y$:
\[
\begin{array}{rl}
& \Tr^c\Bigg(\pi_* F^{e+c}_* \Big(\tau\big(\omega_Y, \Gamma_Y\big) \otimes \O_Y\big( (1-p^{e+c})\Gamma_Y\big)\Big)\Bigg)\\
\to & \pi_* F^{e}_* \Big(\tau\big(\omega_Y, \Gamma_Y\big) \Big)\otimes \O_Y\big( (1-p^{e})\Gamma_Y\big)\Big)
\end{array}
\]
and so the sum in \autoref{eq.tauEqualToSumForNow} is actually a sum of descending terms.  Hence
\[
\tau(X, \Delta, \ba^t) = \Tr^e\Bigg(\pi_* F^e_* \Big(\tau\big(\omega_Y, \Gamma_Y\big) \otimes \O_Y\big( (1-p^e)\Gamma_Y\big)\Big)\Bigg)
\]
for any $e \gg 0$ divisible by $c$.  Our next goal is to replace the $e \gg 0$ with an effective choice of $e > 0$.  We will use the vanishing \autoref{eq.VanishingEffectiveTauComputationGeneral1} to accomplish this.

Notice that for each $n > 0$, the map induced by $\Tr^c$ coming from \autoref{cor.SurjectionOfTraceOnTauOmega} and the projection formula
\begin{equation}
\label{eq.PushforwardSurjectivityUltimate1}
\begin{array}{rl}
& \pi_* F^{e_1+nc}_* \Big(\tau\big(\omega_Y, \Gamma_Y\big) \otimes \O_Y\big( (1-p^{e_1+nc})\Gamma_Y\big)\Big) \\
\to & \pi_* F^{e_1+(n-1)c}_* \Big(\tau\big(\omega_Y, \Gamma_Y\big) \otimes \O_Y\big( (1-p^{e_1+(n-1)c})\Gamma_Y\big)\Big)
\end{array}
\end{equation}
has cokernel equal to
\[
R^1 \pi_* F^{e_1+(n-1)c}_* \Big( \mathcal{N}\tensor \cO_Y\big( (1-p^{e_1+(n-1)c}) \Gamma_Y\big)\Big) = 0
\]
where the vanishing is \autoref{eq.VanishingEffectiveTauComputationGeneral1}.  Thus \autoref{eq.PushforwardSurjectivityUltimate1} is surjective.
Hence by repeatedly applying this argument, we have that
\[
\begin{array}{rl}
& \Tr^{e_1 + nc} \Bigg(\pi_* F^{e_1+nc}_* \Big(\tau\big(\omega_Y, \Gamma_Y\big) \otimes \O_Y\big( (1-p^{e_1+nc})\Gamma_Y\big)\Big)\Bigg)\\
= & \Tr^{e_1} \Bigg(\pi_* F^{e_1}_* \Big(\tau\big(\omega_Y, \Gamma_Y\big) \otimes \O_Y\big( (1-p^{e_1})\Gamma_Y\big)\Big)\Bigg),
\end{array}
\]
and so the theorem follows.
\end{proof}

\begin{remark}
\label{rem.Implementation}
The authors hope that this method might eventually allow the computation of test ideals $\tau(X, \Delta, \ba^t)$ to be implemented in a computer algebra system such as Macaulay2 \cite{M2}.  The main obstruction is the computation of the integer $e_1$ such that \mbox{$R^1 \pi_* (\mathcal{N} \tensor \O_Y(-(p^e - 1)p^b tG)) = 0$} for all $e = e_1 + nc$.  However, as suggested by Markus Lange-Hegermann on \begin{center}{\tt http://mathoverflow.net/questions/105333/effective-serre-vanishing},\end{center} one should be able to use relative\footnote{That is over a base ring instead of a field.} Castelnuovo-Mumford regularity \cite[Theorem 2]{OoishiCastelnuovoRegularityForGraded} to detect when a large enough $e_1$ has been obtained.  Alternately, the images computing $\tau(X, \Delta, \ba^t)$ are descending as $e$ increases, and so perhaps this can be played off against ascending chains of ideals which also compute the test ideal.  This may be easier to implement (although perhaps slower).  
\end{remark}

\begin{remark}
Theorem~\ref{thm.EffectiveTauComputationGeneral1} is \emph{not} true if instead one replaces the normalized blowup with a resolution of singularities.  Indeed, consider the D4 singularity $R = \mathbb{F}_2\llbracket x, y \rrbracket/\langle z^2+xyz+xy^2+x^2y\rangle$ given in \cite[Example 7.12]{SchwedeTuckerTestIdealFiniteMaps} (and originally coming from \cite{ArtinWildlyRamifiedZ2Actions}), with $\ba = R$ and $t = 1$.  Then if $\pi : Y \to X = \Spec R$ is a log resolution, we have $\tau(\omega_Y) = \omega_Y$ and so $\pi_* \tau(\omega_Y) = \omega_{X} = \O_X$.  But this example is $F$-pure, so $\Tr^e (F^e_* \O_X) = \O_X$ for all $e$.  However, the test ideal $\tau(\O_X)$ is the maximal ideal $\langle x, y \rangle$.  The reason our argument does not apply is because $\ba \cdot \O_Y = \O_Y$ is not $\pi$-ample.
\end{remark}

We also state a variant of the above which has additional similarities with the computations of test ideals as presented in \cite[Definition 2.9]{BlickleMustataSmithDiscretenessAndRationalityOfFThresholds}.  The main difference between this result and the previous one is here we pick $e$ largely independent of $t$ when $t = a/p^b$.

\begin{theorem}
\label{thm.ComputationOfTauSimple}
Suppose that $(X, \Delta, \ba)$ is a triple, $t_0 > 0$ is a positive rational number, and that $(p^c - 1) (K_X + \Delta)$ is Cartier for some $c > 0$.  Set $\pi : Y \to X$ to be the normalized blowup of $\ba$ with $\O_Y(-G) = \ba  \O_Y$.  Let $\mathcal{N}$ denote the kernel of the natural map
\[
\begin{array}{rl}
F^c_* \big(\tau(\omega_Y, \pi^*(K_X + \Delta)) \otimes \O_Y( (1-p^c)(\pi^*(K_X + \Delta))) \big) & \\
\to  \tau(\omega_Y, \pi^{*}(K_X + \Delta) ) & \\
\end{array}
\]
induced by the trace map $F^c_* \omega_Y \to \omega_Y$.
Fix $e_1 > 0$ a positive multiple of $c$ such that
\begin{equation}
\label{eq.VanishingComputationOfTauSimple}
R^1 \pi_* (\mathcal{N} \tensor \O_Y(-f G)) = 0
\end{equation}
 for all $f \geq p^{e_1} t_0$ (this is possible since $-G$ is $\pi$-ample).
Then for any $t = a/p^b \geq t_0$ and any $e$ a multiple of $c$ satisfying $e = kc \geq \max(e_1, b)$
\[
\begin{array}{rl}
  & \tau(X, \Delta, \ba^t) \\
= &  \tau(\omega_X, K_X + \Delta, \ba^t) \\
= &  \Tr^e\Big( \pi_* F^e_* \big( \tau(\omega_Y, \pi^*(K_X + \Delta)) \otimes \O_Y((1-p^e)\pi^*(K_X + \Delta) - p^e tG) \big)\Big).
\end{array}
\]
\end{theorem}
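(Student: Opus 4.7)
The plan is to mirror the argument in the proof of \autoref{thm.EffectiveTauComputationGeneral1}, taking advantage of the fact that $t = a/p^b$ (with no $(p^c-1)$ factor) makes $p^e tG$ an honest Cartier divisor for every $e \geq b$, so that the requisite vanishing can be formulated purely in terms of the twist $-p^e tG$. Setting $\Theta := \pi^*(K_X+\Delta)$, we have $(1-p^e)\Theta$ Cartier whenever $c \mid e$, and by \autoref{eq.TauOmegaPrincipalSkoda} applied to the Cartier divisor $(p^e-1)\Theta + p^e tG$, the sheaf of interest on $Y$ can be rewritten as
\[
M_e := \tau(\omega_Y, \Theta) \otimes \O_Y\big((1-p^e)\Theta - p^e tG\big) = \tau\big(\omega_Y, p^e(\Theta + tG)\big).
\]

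The central step is to show that for every $e \geq \max(e_1, b)$ with $c \mid e$, the natural trace-induced map $\pi_* F^c_* M_{e+c} \to \pi_* M_e$ is surjective.  This map arises by tensoring the basic trace surjection $F^c_*(\tau(\omega_Y, \Theta) \otimes \O_Y((1-p^c)\Theta)) \twoheadrightarrow \tau(\omega_Y, \Theta)$ of \autoref{cor.SurjectionOfTraceOnTauOmega} by the line bundle $\O_Y((1-p^e)\Theta - p^e tG)$, using the projection formula for $F^c_*$ to identify the left-hand side.  The kernel of the tensored surjection on $Y$ is $F^c_*\big(\mathcal{N} \otimes \O_Y(p^c(1-p^e)\Theta - p^{e+c}tG)\big)$, so the obstruction to surjectivity after applying $\pi_*$ is
\[
F^c_* R^1\pi_*\Big(\mathcal{N} \otimes \O_Y\big(p^c(1-p^e)\Theta - p^{e+c}tG\big)\Big).
\]
Because $p^c(1-p^e)\Theta = \pi^*\big(p^c(1-p^e)(K_X+\Delta)\big)$ is pulled back from $X$, the projection formula for $\pi_*$ peels it off as a line bundle on $X$, reducing the vanishing to $R^1\pi_*(\mathcal{N} \otimes \O_Y(-p^{e+c}tG)) = 0$.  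This holds by the hypothesis \eqref{eq.VanishingComputationOfTauSimple} since $p^{e+c}t \geq p^{e_1}t_0$ (using $e+c \geq e_1$ and $t \geq t_0$).  Applying $F^e_*$ to the resulting surjection and invoking the Frobenius commutation $\pi_* F_*^e = F_*^e \pi_*$ together with $\Tr^{e+c} = \Tr^e \circ F^e_*(\Tr^c)$ then gives
\[
\Tr^{e+c}\big(\pi_* F^{e+c}_* M_{e+c}\big) = \Tr^e\big(\pi_* F^e_* M_e\big)
\]
for all such $e$.

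To identify the common value with $\tau(X, \Delta, \ba^t)$ and complete the proof, I will appeal to the stabilization statement already contained in (the proof of) \autoref{thm.EffectiveTauComputationGeneral1}: for all sufficiently large $E$ with $c \mid E$, one has $\Tr^E(\pi_* F^E_* M_E) = \tau(X, \Delta, \ba^t)$.  This is obtained by starting from \autoref{prop.TauDescriptionViaBlowup} and absorbing the auxiliary divisor $D$ asymptotically into the parameter test module $\tau(\omega_Y, \Theta)$ via \autoref{lem.TransformationRulesForParameterTestModules}(b) and relative Serre vanishing.  Iterating the surjectivity from above backwards from such a large $E$ down to any $e \geq \max(e_1, b)$ with $c \mid e$ then delivers the theorem.

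The main hurdle is the careful bookkeeping of the two projection formulas in tandem: one for the affine Frobenius $F^c$ (used to move line-bundle twists across $F^c_*$) and one for the proper birational $\pi$ (used to peel the pulled-back Cartier twist $\pi^*\big(p^c(1-p^e)(K_X+\Delta)\big)$ off of $R^1\pi_*$).  Both are essential to reduce the surjectivity to the exact uniform vanishing \eqref{eq.VanishingComputationOfTauSimple}, which is what permits a single $e_1$ to serve all $t \geq t_0$ of the prescribed form, rather than being forced to depend on the specific $t$ as in \autoref{thm.EffectiveTauComputationGeneral1}.
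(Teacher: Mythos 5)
Your proposal is correct and follows essentially the same strategy as the paper: obtain stabilization for $E \gg 0$ by absorbing the auxiliary divisor into $\tau(\omega_Y,\Theta)$ via relative Serre vanishing, then propagate surjectivity backward from large $E$ to $e \geq \max(e_1,b)$ using the hypothesized vanishing, which is exactly how the paper proceeds (with the $\Tr^{e-m}$ step done in one shot rather than iterated by $\Tr^c$). There is a small bookkeeping slip in writing the kernel as $F^c_*(\mathcal{N}\otimes\O_Y(p^c(1-p^e)\Theta - p^{e+c}tG))$ with the twist power $p^{e+c}t$ rather than $p^e t$ — you are silently conflating $\mathcal{N}$ with the underlying $\O_Y$-module of which it is the Frobenius push-forward — but since the hypothesis gives vanishing uniformly for all $f\geq p^{e_1}t_0$, both $f=p^e t$ and $f=p^{e+c}t$ are covered and the argument survives intact.
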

\begin{proof}
The strategy is essentially the same as in \autoref{thm.EffectiveTauComputationGeneral1}.
As before, assume $X$ is affine and we observe that by the argument of \autoref{rem.TauDivisibleE}, for some choice of Cartier divisor $B > 0$ on $Y$, if we set $\Theta = \pi^*(K_X + \Delta)$, we have:
\begin{equation}
\label{eq.tauEqualsSumMinusPtoED}
\begin{array}{rl}
& \tau(X, \Delta, \ba^t) \\
 = & {\displaystyle{\sum_{\substack{e \gg 0 \\ \text{divisible}\\ \text{by $c$}}} \Tr^e\Bigg(\pi_* F^e_* \Big(\tau\big(\omega_Y, \Theta   \big) \otimes \O_Y\big( (1-p^e)\Theta - tp^e G - B\big)\Big)\Bigg)}.}
 \end{array}
\end{equation}
Note any difference between $\tau(\omega_Y, \Theta)$ and $\O_Y(K_Y)$ can be absorbed into $B$ as before.
Likewise, as in the previous proof choose $d > 0$, divisible by $c$, such that by \autoref{lem.TransformationRulesForParameterTestModules}(b) and \autoref{eq.TauOmegaUnchangedBySmallEpsilon}
\[
F^d_* \Big( \tau(\omega_Y, \Theta) \otimes \O_Y( (1-p^d)\Theta - B) \Big) \xrightarrow{\Tr^d}\!\!\!\!\!\!\to \tau(\omega_Y, \Theta + {1 \over p^d} B) = \tau(\omega_Y, \Theta)
\]
surjects.  Let $\mathcal{K}$ denote the kernel of this map and let $e_2$, a multiple of $c$, be such that
\[
R^1 \pi_* \big(\mathcal{K} \tensor \O_Y( (1-p^e) \Theta - tp^e G)\big) = 0
\]
for all $e = lc + e_2 > b$.  Therefore, for those same $e$, by the projection formula we have that
\[
\begin{array}{rl}
& \pi_* F^{e+d}_* \Big(\tau\big(\omega_Y, \Theta \big) \otimes \O_Y\big( (1-p^{e+d})\Theta - tp^{e+d} G - B\big)\Big) \\
\xrightarrow{F^e_* \Tr^d} & \pi_* F^e_* \Big(\tau(\omega_Y, \Theta) \otimes \O_Y( (1-p^e)\Theta) - tp^e G \Big)
\end{array}
\]
is surjective.  Therefore
\[
\begin{array}{rl}
 \tau(X, \Delta, \ba^t)
 =  {\displaystyle{\sum_{\substack{e \gg 0 \\  \text{divisible}\\ \text{by $c$}}} \Tr^e\Bigg(\pi_* F^e_* \Big(\tau\big(\omega_Y, \Theta\big) \otimes \O_Y\big( (1-p^e)\Theta - p^e tG\big)\Big)\Bigg)}}.
 \end{array}
\]
Let $m$ be a multiple of $c$ with $m \geq \max( e_1, b)$. Note that for $e =kc \geq m$, the map
{
\medmuskip=0mu
\thickmuskip=0mu
\thinmuskip=0mu
\[
\pi_* F^e_* \Big(\tau{\hskip-0.75pt}\big(\omega_Y, \Theta{\hskip-1.5pt}\big) \otimes {\O_Y}{\hskip-1.5pt}\big( (1-p^e)\Theta - p^e tG\big)\Big) \xrightarrow{\Tr^{e-m}} \pi_* F^m_* \Big(\tau{\hskip-0.75pt}\big(\omega_Y, \Theta{\hskip-1.5pt}\big) \otimes \O_Y{\hskip-1.5pt}\big( (1-p^m)\Theta - p^m tG\big)\Big)
\]
}
surjects by \autoref{eq.VanishingComputationOfTauSimple}.
But these maps factor $\Tr^e$ and the claimed result follows immediately.
\end{proof}

\begin{remark}
\label{rem.ComputationOfTauSimpleDivP}
If the index of $K_X + \Delta$ is divisible by $p$, it is not difficult to account for that by using the formula $\Tr^e(\tau(\omega_X, K_X + \Delta, \ba^t)) = \tau(\omega_X, {1/p^e}(K_X + \Delta), \ba^{t/p^e})$ from \autoref{lem.TransformationRulesForParameterTestModules}(b).  We leave it to the reader to formulate this generalization.
\end{remark}

\section{Discreteness of $F$-jumping numbers}

In this section, we fix $X$ a normal $F$-finite scheme with ideal sheaf $\ba$.  We study $F$-jumping numbers of $\tau(X, \ba^t)$.  For basic setup and definition of terms, see \cite{BlickleSchwedeTakagiZhang} and \cite{SchwedeTuckerZhang}.  First, we recall the following Lemma.

\begin{lemma}\textnormal{(\cite[Lemma 3.23]{BlickleSchwedeTakagiZhang})}
\label{lem.EasyEpsilonJump}
Suppose that $X$ is normal, $\Delta \geq 0$ on $X$ is such that $K_X + \Delta$ is $\bQ$-Cartier and that $\ba$ ideal sheaf on $X$.  Then for any $t \geq 0$, there exists an $\varepsilon > 0$ such that
\[
\tau(X, \Delta, \ba^t) = \tau(X, \Delta, \ba^s)
\]
for all $s \in [t, t+\varepsilon]$.
\end{lemma}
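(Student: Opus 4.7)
My plan is to reduce to the affine case and then use the description of the test ideal from Proposition~\ref{prop.TauDescriptionViaBlowup}. Let $\pi : Y \to X$ be the normalized blowup of $\ba$ with $\ba \cdot \O_Y = \O_Y(-G)$, and write $\sL^{X,\Delta,\ba^{s}}_{e,\pi,D} := \O_Y(\lceil K_Y - p^e(\pi^*(K_X + \Delta) + sG)\rceil - D)$. The key observation is that the perturbing divisor $D = \pi^*B_0 + D'$ in that Proposition can be chosen independently of the exponent: $B_0$ comes from a test element on $X$, and the Claim in its proof produces $D'$ satisfying $D' \geq K_Y + \pi^* C$ with $C$ depending only on $K_X + \Delta$. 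Consequently, after enlarging $D$ so that $e_0 = 0$ is admissible, the equality
\[
\tau(X, \Delta, \ba^{s}) = \sum_{e \geq 0} \Tr^{e}\big(F^{e}_* \pi_* \sL^{X,\Delta,\ba^{s}}_{e,\pi,D}\big)
\]
holds simultaneously for every $s \geq 0$.

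Next, I would apply the Noetherian property: the ascending sum above is a finite sum for $s = t$, so there exists some $e_2$ with $\tau(X, \Delta, \ba^{t}) = \sum_{0 \leq e \leq e_2} \Tr^{e}(F^{e}_* \pi_* \sL^{X,\Delta,\ba^{t}}_{e,\pi,D})$. My aim is then to show that each of these finitely many summands is unchanged when $t$ is replaced by $s$ slightly larger. Since $\sL^{X,\Delta,\ba^{s}}_{e,\pi,D}$ depends on $s$ only through $\lceil -p^{e}sG\rceil$, whose component-wise floors are right-continuous step functions of $s$, for each of the finitely many pairs (prime component of $G$, exponent $e \in [0, e_2]$) I obtain a positive $\varepsilon$ on which the relevant floor stays constant; taking the minimum yields a uniform $\varepsilon > 0$ with $\sL^{X,\Delta,\ba^{s}}_{e,\pi,D} = \sL^{X,\Delta,\ba^{t}}_{e,\pi,D}$ for all such $e$ and all $s \in [t, t+\varepsilon]$.

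For such $s$ we then have
\[
\tau(X, \Delta, \ba^{t}) = \sum_{0 \leq e \leq e_2} \Tr^{e}(F^{e}_* \pi_* \sL^{X,\Delta,\ba^{s}}_{e,\pi,D}) \subseteq \sum_{e \geq 0} \Tr^{e}(F^{e}_* \pi_* \sL^{X,\Delta,\ba^{s}}_{e,\pi,D}) = \tau(X, \Delta, \ba^{s}),
\]
while the reverse inclusion $\tau(X, \Delta, \ba^{s}) \subseteq \tau(X, \Delta, \ba^{t})$ follows directly from Definition~\ref{def.TauDefinition}: since $\ba^{\lceil s(p^e-1)\rceil} \subseteq \ba^{\lceil t(p^e-1)\rceil}$ for $s \geq t$, the ideal $\tau(X, \Delta, \ba^{t})$ itself satisfies the defining condition for $\tau(X, \Delta, \ba^{s})$ and thus contains $\tau(X, \Delta, \ba^{s})$ by minimality. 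The main obstacle I anticipate is establishing that $D$ can be taken uniformly in $s$; attempting to prove the lemma directly from the defining property of the test ideal is blocked because the defining condition becomes weaker (not stronger) as $s$ grows, so that minimality only yields the easy direction of the inclusion.
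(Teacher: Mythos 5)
Your proof is correct but takes a genuinely different route from the paper's. The paper simply cites \cite[Lemma 3.23]{BlickleSchwedeTakagiZhang} for the case where the index of $K_X + \Delta$ is prime to $p$, and handles the general case by the one-line reduction $\Tr^e\big(F^e_*\,\tau(\omega_X, p^e(K_X+\Delta), \ba^{p^e t})\big) = \tau(\omega_X, K_X+\Delta, \ba^t)$ from \autoref{lem.TransformationRulesForParameterTestModules}(b). You instead give a self-contained argument built on the paper's new blowup description \autoref{prop.TauDescriptionViaBlowup}: you observe that the perturbing divisor $D = \pi^*B_0 + D'$ in its proof can be chosen uniformly in the exponent (since $B_0$ comes from a test element and the claim there requires only $D' \geq K_Y + \pi^*C$ with $C$ depending just on $K_X + \Delta$), truncate the sum via Noetherianity, and then use that the finitely many ceilings $\lceil K_Y - p^e\pi^*(K_X+\Delta) - p^e s G\rceil$ for $e\le e_2$ are right-continuous, locally constant step functions of $s$ (as each coefficient is the ceiling of a decreasing affine function of $s$). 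The reverse containment by minimality in \autoref{def.TauDefinition} is standard. Your argument is more self-contained and illustrates the utility of the blowup description; the paper's is shorter but leans on an external reference. One cosmetic slip: you speak of \emph{floors} being right-continuous, but the relevant operations are ceilings $\lceil -p^e s G\rceil$ (equivalently $-\lfloor p^e s G\rfloor$), and the right-continuity holds because these are ceilings of decreasing functions; this does not affect the validity of the argument.
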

\begin{proof}
In the case that the index of $K_X + \Delta$ is not divisible by $p > 0$, this is simply \cite[Lemma 3.23]{BlickleSchwedeTakagiZhang}.  But we may easily reduce to that case by using the fact that $\Tr^e(\tau(\omega_X, p^e(K_X + \Delta), \ba^{p^et})) = \tau(\omega_X, K_X + \Delta, \ba^t)$ from \autoref{lem.TransformationRulesForParameterTestModules}(b).
\end{proof}

We now prove another Lemma which shows that the test ideal is stabilized in the other direction.  This lemma is the key observation needed in the result of this section (compare with \cite[Proposition 6.3]{KatzmanLyubeznikZhangOnDiscretenessAndRationality} and \cite[Proposition 5.3]{BlickleSchwedeTakagiZhang}).  For a sketch of the proof see the  discussion after \autoref{thm.EasyEffectiveComputation}.  Most of the technicality of the following proof is designed to prove the surjectivity of the map labeled $\beta$ in that discussion.

\begin{lemma}
\label{lem.NoAccumulationFromBelow}
Suppose that $t = b/(p^c - 1)$.  Then there exists a number $\varepsilon > 0$ such that $\tau(\omega_X, \ba^s)$ is constant for all $t - \varepsilon < s < t$.
\end{lemma}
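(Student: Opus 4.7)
The plan is to reduce the claim to the already-known principal-case discreteness on the normalized blowup $\pi\colon Y \to X$ of $\ba$, where $\ba \cdot \O_Y = \O_Y(-G)$. Since on $Y$ the relevant family is $\{\tau(\omega_Y, sG)\}$ with $G$ an effective Cartier divisor, the discreteness results for principal ideals (\cite{BlickleSchwedeTakagiZhang,SchwedeTuckerZhang}) provide an $\varepsilon_0 > 0$ and a fixed sheaf $\sF$ on $Y$ with $\tau(\omega_Y, sG) = \sF$ for every $s \in (t - \varepsilon_0, t)$. The hypothesis $t = b/(p^c - 1)$ ensures $t(p^e - 1) \in \bZ$ whenever $c \mid e$, so the sequence $s_e := t(p^e - 1)/p^e$ is an increasing sequence of rationals with $s_e \nearrow t$ through multiples of $c$; in particular $s_e \in (t - \varepsilon_0, t)$ for all such $e$ sufficiently large.

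Next, by the effective computation of test ideals from the previous section (\autoref{thm.EffectiveTauComputationGeneral1}, translated to parameter test modules via \autoref{lem.ParamTestModAreTestIdeals}), one has
\[
\tau(\omega_X, \ba^{s_e}) = \Tr^e\bigl( F^e_* \pi_* \bigl(\tau(\omega_Y) \otimes \O_Y(-t(p^e - 1) G)\bigr)\bigr)
\]
for every sufficiently divisible $e$. Write $e = e_1 + e_2$ with both summands divisible by $c$, and factor $\Tr^e = \Tr^{e_2} \circ F^{e_2}_* \Tr^{e_1}$. Invoking \autoref{lem.TransformationRulesForParameterTestModules}(b) on $Y$ together with the projection formula yields a surjection on $Y$
\[
F^{e_1}_*\bigl(\tau(\omega_Y) \otimes \O_Y(-t(p^{e_1 + e_2} - 1) G)\bigr) \twoheadrightarrow \sF \otimes \O_Y(-t(p^{e_2} - 1) G),
\]
where the identification $\tau(\omega_Y, s_{e_1} G) = \sF$ uses the principal-case discreteness for $e_1$ large enough. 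Pushing this surjection forward by $F^{e_2}_* \pi_*$ and composing with the residual $\Tr^{e_2}$ yields a factorization
\[
\pi_* F^e_*\bigl(\tau(\omega_Y) \otimes \O_Y(-t(p^e - 1) G)\bigr) \xrightarrow{\ \beta\ } F^{e_2}_* \pi_*\bigl(\sF \otimes \O_Y(-t(p^{e_2}-1) G)\bigr) \xrightarrow{\ \alpha\ } \omega_X,
\]
whose composite image is $\tau(\omega_X, \ba^{s_e})$. Crucially, the image of $\alpha$ alone depends only on $e_2$, not on $e_1$.

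The main obstacle, and the technical heart of the argument, is to verify that $\beta$ is surjective once $e_1$ is chosen sufficiently large after $e_2$ has been fixed. Since $\pi_*$ is only left-exact, this reduces to a relative vanishing $R^1 \pi_* = 0$ of the kernel of the displayed $Y$-surjection. Using that Frobenius pushforward is exact and commutes with $\pi_*$ (the Frobenii on $X$ and $Y$ intertwine $\pi$, and Frobenius is affine), together with the projection formula, this becomes
\[
R^1 \pi_*\bigl(\mathcal{Q}_{e_1} \otimes \O_Y(-p^{e_1} t(p^{e_2} - 1) G)\bigr) = 0,
\]
where $\mathcal{Q}_{e_1}$ denotes the kernel on $Y$ of $\Tr^{e_1}$ viewed as a map of abelian sheaves. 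Since $-G$ is $\pi$-ample and the twist $-p^{e_1} t(p^{e_2}-1) G$ becomes unboundedly $\pi$-ample as $e_1 \to \infty$, the plan is to invoke a relative Fujita-type vanishing \cite{KeelerAmpleFiltersOfInvertibleSheaves}. The subtlety, where I expect most of the effort to go, is that $\mathcal{Q}_{e_1}$ varies with $e_1$; we plan to address this either by splitting $\Tr^{e_1}$ into single trace steps (iterating a single $R^1\pi_* = 0$ vanishing, each time for a fixed kernel sheaf against a Frobenius-scaled twist) or by exhibiting $\{\mathcal{Q}_{e_1}\}$ as a Frobenius-pushforward family drawn from a bounded set of isomorphism classes, so that a uniform Fujita-type bound applies.

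Granting surjectivity of $\beta$, the image of $\alpha \circ \beta = \Tr^e$ equals the image of $\alpha$ and is therefore independent of $e_1$. Hence $\tau(\omega_X, \ba^{s_e})$ is constant for all $e$ a multiple of $c$ past some threshold $e_0$. By monotonicity of the parameter test module in its coefficient (larger coefficient yields smaller module) together with $s_e \nearrow t$, this forces $\tau(\omega_X, \ba^s) = \tau(\omega_X, \ba^{s_{e_0}})$ for every $s \in (s_{e_0}, t)$, so taking $\varepsilon := t - s_{e_0}$ completes the plan.
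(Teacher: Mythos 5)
Your strategy matches the paper's: pass to the normalized blowup, use principal-case stabilization to produce a fixed $\sF = \tau(\omega_Y, (t-\delta)G)$ for $0 < \delta \ll 1$, express $\tau(\omega_X, \ba^{s_e})$ via the effective computation theorem, factor $\Tr^e = \alpha\circ\beta$, and reduce everything to the surjectivity of $\pi_*\beta$. Your option (a) for that crux step is what the paper does, but one detail needs to be made explicit: the intermediate coefficients $t(p^{kc}-1)/p^{kc}$ only enter the interval of constancy for $k$ large, so a naive step-by-step iteration of $\Tr^c$ does \emph{not} have a fixed kernel at the early stages, and the phrase ``each time for a fixed kernel sheaf'' is not quite right as stated. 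The paper's device is to first take one large trace step $T_1 = \Tr^{cd}$, with $d$ chosen so that $t - \delta < t(p^{cd}-1)/p^{cd} < t$, which carries $\tau(\omega_Y)$ directly into the constant regime; one then iterates a single surjection $W_1 : F^{cd}_*\bigl(\sF\otimes\O_Y(-t(p^{cd}-1)G)\bigr)\twoheadrightarrow\sF$, whose source, target, and kernel genuinely \emph{are} fixed. Surjectivity of $\pi_*(T_1\otimes\O_Y(-kG))$ and $\pi_*(W_1\otimes\O_Y(-kG))$ for $k\gg 0$ (from $\pi$-ampleness of $-G$) then propagates to $T_n$ by writing $T_n$ as $W_{n-1}$ following a Frobenius-twisted $T_1$. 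With this $T_1$-then-$W_1$ decomposition your option (a) is exactly the paper's argument; option (b) as stated would be harder to execute and is unnecessary.
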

\begin{proof}
We may assume that $X = \Spec R$ is affine.
Let $\pi : Y \to X$ be the normalized blowup of $\ba$ and set $\ba \O_Y = \O_Y(-G)$.  We will find $a_0 > 0$ such that
\[
\tau(\omega_X, \ba^{t({p^a-1 \over p^a})})
\]
is constant for all $a \geq a_0$.  In fact, it is sufficient to do this for some sequence of $a$ going to infinity.

First, we notice that $\tau(\omega_Y, (t - \delta)G)$ is constant for $1 \gg \delta > 0$ by \autoref{lem.ParamTestModAreTestIdeals} and \cite[Proposition 5.3]{BlickleSchwedeTakagiZhang}.  Thus choose $d > 0$ such that
$t - \delta < t({p^{cd} - 1 \over p^{cd}}) < t$.  Now, by \autoref{lem.TransformationRulesForParameterTestModules}(b), we have surjections induced by trace for every $n > 0$
\[
T_{n} : F^{ncd}_* \Big(\tau(\omega_Y) \otimes \O_Y( -t(p^{ncd} - 1)G)\Big) \rightarrow \tau\big(\omega_Y, t({p^{ncd} - 1 \over p^{ncd}})G\big) = \tau\big(\omega_Y, (t-\delta)G\big).
\]
and also surjections
\[
\begin{array}{rcl}
W_n :&  F^{ncd}_* \Big( \tau(\omega_Y, (t-\delta)G) \otimes \O_Y(-t(p^{ncd} - 1)G) \Big) \\
= &  F^{ncd}_* \Big( \tau(\omega_Y, t({p^{cd} - 1 \over p^{cd}})G) \otimes \O_Y(-t(p^{ncd} - 1)G) \Big)\\
\to & \tau(\omega_Y, t({p^{cd} - 1 \over p^{cd + ncd}})G + t({p^{ncd} - 1 \over p^{ncd}})G)\\
= & \tau(\omega_Y, t({p^{cd} - 1 + p^{cd}(p^{ncd}-1) \over p^{cd+ncd} })G)\\
= & \tau(\omega_Y, t({p^{(1+n)cd} - 1  \over p^{(1+n)cd} })G)\\
= & \tau(\omega_Y, (t - \delta)G).
\end{array}
\]
Choose $e_0 > 0$ such that for all $k \geq t(p^{e_0c}-1)$ we have that the map $\pi_* (T_1 \otimes \O_Y(-kG))$ sending
\[
\pi_* F^{cd}_* \Big(\tau(\omega_Y) \otimes \O_Y( -t(p^{cd} - 1)G - p^{cd}k G)\Big) \to \pi_*\Big( \tau(\omega_Y, (t-\delta)G) \otimes \O_Y(-kG) \Big)
\]
is surjective, and also that the map $\pi_* (W_1 \otimes \O_Y(-t(p^{ec}-1)G))$ sending
\[
\pi_* F^{cd}_*\Big( \tau(\omega_Y, (t-\delta)G) \otimes \O_Y(-t(p^{cd} - 1)G - p^{cd}k G) \Big) \to \pi_* \Big( \tau(\omega_Y, (t-\delta)G) \otimes \O_Y(-kG) \Big)
\]
is surjective (both are possible since $-G$ is $\pi$-ample).  Since $W_n$ is simply $W_1$ composed with itself $n$-times (with appropriate twists by $\pi$-anti-ample divisors), we see that the map $W_{n,e}' := \pi_* (W_n \otimes \O_Y(-t(p^{ec}-1)G))$ is surjective as well for $e \geq e_0$.
\[
\xymatrix{
\vdots \ar@{->>}[d]\\
\pi_* F^{2cd}_*\Big( \tau(\omega_Y, (t-\delta)G) \otimes \O_Y(-t(p^{2cd} - 1)G - p^{2cd} t(p^{ec}-1)G) \Big) \ar@{->>}[d] \ar@/^15pc/@{->>}[dd]^{W_{2,e}'}\\
\pi_* F^{cd}_*\Big( \tau(\omega_Y, (t-\delta)G) \otimes \O_Y(-t(p^{cd} - 1)G - p^{cd} t(p^{ec}-1)G) \Big) \ar@{->>}[d]\\
\pi_* \Big(\tau(\omega_Y, (t - \delta)G) \otimes \O_Y(-t(p^{ec}-1)G)\Big)
}
\]
Likewise since $T_n$ is simply $T_1$ composed with $W_{n-1}$ (twisted appropriately), we see that $\beta = \pi_* (T_n \otimes \O_Y(-t(p^{ec}-1)G))$ is surjective for $e \geq e_0$.

Now we form the following composition where $\beta$ is surjective by the above analysis
\[
\begin{array}{rl}
& \pi_* F^{ncd+ec}_*  \Big(\tau(\omega_Y)\otimes \O_Y(-t(p^{ncd+ec} - 1)G)\Big) \\
\overset{\beta}{\twoheadrightarrow} & \pi_* F^{ec}_* \Big( \tau(\omega_Y, (t-\delta)G) \otimes \O_Y(-t(p^{ec}-1)G) \Big) \\
\xrightarrow{\alpha} & \omega_X .
\end{array}
\]
The image of this composition is equal to $\tau(\omega_X, \ba^{t({p^{ncd+ec}-1\over p^{ncd+ec}})})$ by \autoref{thm.ComputationOfTauSimple} at least for all $e \gg 0$.  However, this image is constant as we vary $n$ since the image of this composition is also the same as the image of $\alpha$.  Sending $n$ to infinity completes the proof.
\end{proof}

The rest of the argument for discreteness and rationality follows \cite{BlickleMustataSmithFThresholdsOfHypersurfaces,KatzmanLyubeznikZhangOnDiscretenessAndRationality}.

\begin{theorem}
\label{thm.DiscAndRat}
Suppose that $X$ is an $F$-finite normal scheme, $\Delta$ is a $\bQ$-divisor on $X$ such that $K_X + \Delta$ is $\bQ$-Cartier, and $\ba$ is an ideal sheaf on $X$.  Then the set of $F$-jumping numbers of $\tau(X, \Delta, \ba^t)$ is a discrete set of rational numbers.
\end{theorem}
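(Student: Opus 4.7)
The plan is to carry out the paper's core philosophy directly: blow up to principalize $\ba$, then invoke the known principal case of the theorem. Let $\pi : Y \to X$ be the normalized blowup of $\ba$, so $\ba \cdot \O_Y = \O_Y(-G)$ for an effective Cartier divisor $G$, and set $\Lambda := \pi^*(K_X + \Delta)$.

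The heart of the argument is to distill a clean identity out of the effective computation formula of \autoref{thm.EffectiveTauComputationGeneral1}. I claim that for every rational $t = a/(p^b(p^c-1))$ to which that theorem applies, one has
\[
\tau(X, \Delta, \ba^t) \;=\; \Tr_\pi\big(\pi_* \tau(\omega_Y, \Lambda + tG)\big).
\]
To see this, inside the formula of \autoref{thm.EffectiveTauComputationGeneral1} the shift rule $\tau(\omega_Y, \Gamma) \otimes \O_Y(D) = \tau(\omega_Y, \Gamma - D)$ from \autoref{eq.TauOmegaPrincipalSkoda} (valid because the index is absorbed into $p^b(p^c-1)$, making the relevant twisting divisor actually Cartier) combines the parameter test module and the twisting sheaf into $\tau(\omega_Y, p^{b+e}(\Lambda+tG))$. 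Then Grothendieck duality for $F^{e+b} \circ \pi$ yields the factorization $\Tr_X^{e+b} \circ F^{e+b}_* \Tr_\pi = \Tr_\pi \circ \pi_* \Tr_Y^{e+b}$, and \autoref{lem.TransformationRulesForParameterTestModules}(b) applied on $Y$ collapses $\Tr_Y^{e+b} F^{e+b}_* \tau(\omega_Y, p^{b+e}(\Lambda + tG)) = \tau(\omega_Y, \Lambda + tG)$. Since every rational can be written in the required form $a/(p^b(p^c-1))$, and both sides of the identity are right-continuous in $t$ --- the left by \autoref{lem.EasyEpsilonJump}, the right by the same lemma applied to the effective shift of \autoref{def.ParameterTestSubmodulesNonEffective} --- the identity extends to all $t \in \bR_{\geq 0}$.

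Granting the clean identity, every $F$-jumping number of $\tau(X, \Delta, \ba^t)$ must be an $F$-jumping number of $\tau(\omega_Y, \Lambda + tG)$, because the functor $\Tr_\pi \circ \pi_*$ cannot create jumps out of a locally constant input. But via \autoref{lem.ParamTestModAreTestIdeals} and \autoref{def.ParameterTestSubmodulesNonEffective}, the family $\tau(\omega_Y, \Lambda + tG)$ corresponds locally (since $G$ is Cartier, with local equation $g$) to a test ideal on the normal $F$-finite scheme $Y$ whose varying ideal sheaf is principal. Discreteness and rationality of $F$-jumping numbers of principal-ideal test ideals is precisely the already established principal case from \cite{BlickleSchwedeTakagiZhang,SchwedeTuckerZhang}, and the theorem follows.

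The main obstacle is justifying the clean reformulation rigorously: one must keep careful track of which divisors are only $\bQ$-Cartier versus genuinely Cartier so that the shift rule applies, invoke Grothendieck duality for the trace commutation $\Tr_X^{e+b} \circ F^{e+b}_*\Tr_\pi = \Tr_\pi \circ \pi_* \Tr_Y^{e+b}$ precisely (for the not-necessarily-regular morphism $\pi$), and verify that the principal-case discreteness genuinely covers the generality at hand, with $Y$ possibly not $\bQ$-Gorenstein, boundary $\Lambda - K_Y$ possibly non-effective, and $\O_Y(-G)$ only locally principal globally. Once these points are settled the conclusion is formal, and it is notable that this argument yields both discreteness and rationality simultaneously and completely avoids any separate treatment of irrational accumulation points.
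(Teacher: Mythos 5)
There is a genuine gap, and it is precisely the one the paper itself flags in the proof of \autoref{thm.TestViaAlterations1}. Your ``clean identity'' $\tau(X, \Delta, \ba^t) = \Tr_\pi\bigl(\pi_* \tau(\omega_Y, \Lambda + tG)\bigr)$ is not established by your derivation. Let us write $\Gamma_Y = \Lambda + tG$ and $J = \tau(\omega_Y, p^{b+e}\Gamma_Y)$. The trace-factorization $\Tr_{F^{e+b}_X\circ\pi} = \Tr_\pi \circ \pi_* \Tr_{F^{e+b}_Y}$ is perfectly fine as an identity of \emph{morphisms} of sheaves. But when you compute the \emph{image} of the subsheaf $\pi_* F^{e+b}_* J$, the right-hand factorization forces you through the composite
\[
\pi_*\bigl(F^{e+b}_{Y*}J\bigr) \longrightarrow \pi_*\bigl(\Tr_{F^{e+b}_Y}(F^{e+b}_{Y*}J)\bigr) = \pi_*\tau(\omega_Y,\Gamma_Y) \longrightarrow \omega_X,
\]
and the first arrow is $\pi_*$ applied to a surjection on $Y$, which \emph{need not remain surjective} since $\pi_*$ is only left-exact. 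So what your manipulation actually yields from \autoref{thm.EffectiveTauComputationGeneral1} is the single containment $\tau(X,\Delta,\ba^t) \subseteq \Tr_\pi(\pi_*\tau(\omega_Y,\Gamma_Y))$. You implicitly claim equality, but the needed surjectivity is exactly the content of a relative vanishing $R^1\pi_*(\ker) = 0$, and the vanishing hypothesis in \autoref{thm.EffectiveTauComputationGeneral1} controls only the one-step kernels at levels $e \geq e_1$, not the cumulative kernel of the composite trace down to level $0$. In other words, the hard work you are hoping to outsource has not actually been done.

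This is not an incidental technicality that you can ``invoke Grothendieck duality precisely'' to fix; the paper makes this exact point explicitly in the proof of \autoref{thm.TestViaAlterations1}: \emph{``Note we only get containment since we do not know that we still have a surjection after pushing forward by $\pi$ (we only have the surjection on $Y$ locally).''} The reverse containment there is obtained from a completely separate ``easy containment'' argument (in the $\subseteq$ direction only), and even that equality is established for a specific alteration constructed \emph{a posteriori}, not for $\Tr_\pi(\pi_*\tau(\omega_Y,\Gamma_Y))$. The paper's proof of \autoref{thm.DiscAndRat} deliberately takes a different route: it uses \autoref{lem.NoAccumulationFromBelow}, whose proof sets up a chain of twisted pushforwards on $Y$, establishes surjectivity of specific links via relative Serre vanishing (the maps $W_n$, $T_n$ and their twists), and thereby shows the test ideal is eventually constant as $t$ is approached from below, before feeding this into the machinery of \cite{KatzmanLyubeznikZhangOnDiscretenessAndRationality}. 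If you want to salvage your argument you would have to supply a vanishing statement of comparable strength to show the pushforward $\pi_*\bigl(F^{e+b}_{Y*}J\bigr) \twoheadrightarrow \pi_*\tau(\omega_Y,\Gamma_Y)$ is surjective for all the $t$ in question --- at which point you have essentially reconstructed the work of \autoref{lem.NoAccumulationFromBelow}.
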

\begin{proof}
First set $\eta : W \to X$ to be a finite cover such that $\eta^*(K_X + \Delta)$ is Cartier (see for example \cite[Lemma 4.5]{BlickleSchwedeTuckerTestAlterations}).  It follows from \autoref{lem.TransformationRulesForParameterTestModules}(a) that if we can show that the $F$-jumping numbers of $\tau(\omega_W, \eta^*(K_X + \Delta), (\ba \cdot \O_W)^t)$ are discrete and rational, then so are the $F$-jumping numbers of $\tau(\omega_X, K_X + \Delta, \ba^t) = \tau(X, \Delta, \ba^t)$.  However, since $\eta^*(K_X + \Delta)$ is Cartier, $\tau(\omega_W, \eta^*(K_X + \Delta), (\ba \cdot \O_W)^t) = \tau(\omega_W, (\ba \cdot \O_W)^t) \otimes \O_W(-\eta^*(K_X + \Delta))$, it is sufficient to show that the $F$-jumping numbers of $\tau(\omega_W, (\ba \cdot \O_W)^t)$ are discrete and rational.

The rest of the proof is now formally the same as \cite[Theorem 3.1]{KatzmanLyubeznikZhangOnDiscretenessAndRationality} by using \autoref{lem.NoAccumulationFromBelow}.  The point is that $p$ times a jumping number is a jumping number \cite[Lemma 3.25]{BlickleSchwedeTakagiZhang}, and that Skoda's theorem still holds for test ideals.
\end{proof}

\begin{corollary}
If $R = k\llbracket x_1, \dots, x_n \rrbracket$ and $\ba \subseteq R$ is any ideal and $k$ is an $F$-finite field, then the $F$-jumping numbers of $\tau(R, \ba^t)$ are a discrete set of rational numbers.
\end{corollary}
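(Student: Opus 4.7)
The plan is to observe that this corollary is an immediate specialization of Theorem \autoref{thm.DiscAndRat} to the affine scheme $X = \Spec R$ with trivial boundary $\Delta = 0$, so the only work is to verify the hypotheses of that theorem.

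First I would check that $X = \Spec R$ is $F$-finite: since $k$ is $F$-finite, the formal power series ring $R = k\llbracket x_1, \dots, x_n \rrbracket$ is $F$-finite as well, because a $p$-basis for $k$ together with the variables $x_1, \dots, x_n$ gives a $p$-basis of $R$, and so $F_* R$ is finitely generated over $R$. Next, $R$ is a regular local ring, hence normal, so $X$ is normal. Finally, because $R$ is regular, the canonical divisor $K_X$ is Cartier, so in particular $K_X + \Delta = K_X$ is $\bQ$-Cartier when we take $\Delta = 0$.

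Having verified these three hypotheses, \autoref{thm.DiscAndRat} applied to the triple $(X, 0, \ba^t)$ directly yields that the set of $F$-jumping numbers of $\tau(X, 0, \ba^t) = \tau(R, \ba^t)$ is a discrete set of rational numbers, which is the desired conclusion. There is no real obstacle here; the whole point of the corollary is that the previously elusive formal power series case is now covered because Theorem \autoref{thm.DiscAndRat} imposes no finite-type assumption, requiring only $F$-finiteness, normality, and $\bQ$-Cartierness of $K_X + \Delta$, all of which are automatic for $\Spec k\llbracket x_1, \dots, x_n \rrbracket$ when $k$ is $F$-finite.
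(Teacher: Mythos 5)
Your argument is correct and matches the paper's intent: the corollary is stated immediately after Theorem~\autoref{thm.DiscAndRat} precisely because it is the specialization $X = \Spec k\llbracket x_1,\dots,x_n\rrbracket$, $\Delta = 0$, and the only content is verifying the hypotheses ($F$-finiteness of $R$ from $F$-finiteness of $k$, normality and Cartierness of $K_X$ from regularity of $R$), exactly as you do. The paper gives no separate proof because these checks are routine, so your reasoning is the intended one.
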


One might ask whether the $F$-finiteness hypothesis is really necessary.  For a not-necessarily $F$-finite, but excellent $\bQ$-Gorenstein local ring $(R, \bm)$ with a dualizing complex, the authors believe it can be shown that the $F$-jumping numbers of $\tau(R, f^t)$ are discrete and rational along the lines of \cite{KatzmanLyubeznikZhangOnDiscretenessAndRationality,BlickleSchwedeTakagiZhang}.  Thus we ask:

\begin{question}
Given an excellent $\bQ$-Gorenstein local ring with a dualizing complex, and an ideal $\ba \subseteq R$, is it true that the $F$-jumping numbers of $\tau(R, \ba^t)$ are discrete and rational?
\end{question}

\noindent
One cannot use the description of the test ideal given in this paper to tackle this question since that description critically uses $F$-finiteness.  However, one still may be able to blow up $\ba$ and apply local cohomology to the pushdown of appropriate sheaves so as to mimic the strategies applied herein.  This appears to be nontrivial however.

\section{Test ideals via alterations}

The descriptions of the test ideal above allow the main results of \cite{BlickleSchwedeTuckerTestAlterations} to be extended to triples $(X, \Delta, \ba^t)$ where $\ba$ is not principal in this section.  Throughout, we will largely assume $X$ is a normal variety over an $F$-finite field of characteristic $p > 0$ -- so that \cite{deJongAlterations} can be applied to find regular alterations.  However, we will also briefly discuss the (straightforward) generalizations to integral normal schemes in \autoref{rem.NonVarietyDescription1} and \autoref{rem.NonVarietyDescription2}.

Additionally, beginning in this section so as to avoid confusion, when writing the trace map we will generally write a subscript to indicate the morphism under consideration (since we are not only concerned with the Frobenius map).  So for example, what has previously been denoted above by $\Tr^e$ will be denoted by $\Tr_{F^e}$.  More generally, for an alteration $\rho : Y \to X$, recall once more that $\Tr_{\rho}$ is the Grothendieck trace as described in \cite[Proposition 2.18]{BlickleSchwedeTuckerTestAlterations}.

\begin{theorem}[Test ideals via alterations]
\label{thm.TestViaAlterations1}
Given log-$\Q$-Gorenstein triple $(X, \Delta, \ba^t)$ where $X$ is a variety over a perfect field, there exists a regular alteration
\[
\rho : W \to X
\]
with $\ba  \O_W = \O_W(-H)$
such that $\Tr_{\rho} \big(\rho_* \O_W( \lceil K_W - \rho^*(K_X + \Delta) - tH \rceil)\big) = \tau(X, \Delta, \ba^t)$.
It further follows that the test ideal $\tau(X, \Delta, \ba^t)$ is the intersection of these images over all alterations $\rho : W \to X$, \textit{i.e.}
\[
\tau(X, \Delta, \ba^t) = \bigcap_{\rho : W \to X} \Tr_{\rho} \big(\rho_* \O_W( \lceil K_W - \rho^*(K_X + \Delta) - tH \rceil)\big).
\]
\end{theorem}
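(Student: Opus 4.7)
The plan is to combine the effective computation of the test ideal in \autoref{thm.ComputationOfTauSimple} with the alteration realization of parameter test modules from \autoref{lem.TransformationRulesForParameterTestModules}(c). Let $\pi\colon Y \to X$ be the normalized blowup of $\ba$, so that $\ba\cdot\O_Y = \O_Y(-G)$, and set $\Theta := \pi^*(K_X+\Delta)$. For any $e$ that is a sufficiently large positive multiple of $c$ (where $(p^c-1)(K_X+\Delta)$ is Cartier) with $e \geq b$ (where $t = a/p^b$), \autoref{thm.ComputationOfTauSimple} yields
\[
\tau(X,\Delta,\ba^t) = \Tr_{F^e}\Big(F^e_*\,\pi_*\big(\tau(\omega_Y,\Theta)\otimes \O_Y((1-p^e)\Theta - p^e tG)\big)\Big).
\]

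Since $\Theta$ is $\bQ$-Cartier on $Y$, \autoref{lem.TransformationRulesForParameterTestModules}(c) produces a regular alteration $\gamma\colon W \to Y$ with $\Tr_\gamma(\gamma_*\omega_W(\lceil -\gamma^*\Theta\rceil)) = \tau(\omega_Y,\Theta)$. Set $\rho_0 := \pi\circ\gamma$, $H_0 := \gamma^*G$, and then, for $e$ as above, $\rho := F^e\circ\rho_0\colon W\to X$ and $H := p^e H_0$. Since absolute Frobenius commutes with any morphism, $\rho = \rho_0\circ F_W^e$; one readily computes $\rho^*(K_X+\Delta) = p^e\rho_0^*(K_X+\Delta)$ and $\ba\cdot\O_W = (\ba\cdot\O_W)^{[p^e]} = \O_W(-p^e H_0) = \O_W(-H)$ with respect to $\rho$. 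Thus $\rho\colon W\to X$ is a regular alteration with $\ba\cdot\O_W$ locally principal, as required.

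To verify the trace formula, I would tensor the surjection $\gamma_*\omega_W(\lceil-\gamma^*\Theta\rceil)\twoheadrightarrow\tau(\omega_Y,\Theta)$ by the invertible sheaf $\O_Y((1-p^e)\Theta-p^e tG)$ (which is Cartier since $c \mid e$ and $b \leq e$), and apply the projection formula, rewriting the left-hand side as $\gamma_*\O_W(\lceil K_W - \rho^*(K_X+\Delta) - tH\rceil)$ after collecting the exponent computations. Pushing forward by $\pi_*$ and then applying $\Tr_{F^e}\circ F^e_*$, the factorization of the Grothendieck trace through $\rho = F^e\circ\pi\circ\gamma$ identifies the left side with $\Tr_\rho\big(\rho_*\O_W(\lceil K_W - \rho^*(K_X+\Delta) - tH\rceil)\big)$, while the right side equals $\tau(X,\Delta,\ba^t)$ by the displayed formula. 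The hardest step is that $\pi_*$ need not preserve this surjection: this reduces to showing $R^1\pi_*(\sK\otimes\O_Y((1-p^e)\Theta - p^e tG)) = 0$ for $\sK := \ker(\Tr_\gamma)$. Since $-G$ is $\pi$-ample and $\Theta$ is pulled back from $X$, the twisting divisor becomes arbitrarily $\pi$-ample as $e$ grows, so relative Fujita/Serre vanishing (as employed in the proof of \autoref{prop.RelativeVanishingForTau}) provides this vanishing, and we are free to enlarge $e$ to satisfy both it and the stabilization hypotheses of \autoref{thm.ComputationOfTauSimple}.

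For the intersection description, given any regular alteration $\rho'\colon W'\to X$ with $\ba\cdot\O_{W'} = \O_{W'}(-H')$, the containment $\tau(X,\Delta,\ba^t)\subseteq\Tr_{\rho'}\big(\rho'_*\O_{W'}(\lceil K_{W'} - (\rho')^*(K_X+\Delta) - tH'\rceil)\big)$ follows from the smallness characterization of the test ideal: the right-hand side, as an ideal visibly compatible with the trace-stability maps defining $\tau$, must contain $\tau$, exactly as in the principal case treated in \cite{BlickleSchwedeTuckerTestAlterations}. Combined with the reverse equality at our specific $\rho$ (established above), the intersection over all such alterations equals $\tau(X,\Delta,\ba^t)$.
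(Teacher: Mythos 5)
Your proposal contains a genuine gap, and also takes a different route from the paper where a different route is not needed.

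The gap: you invoke \autoref{thm.ComputationOfTauSimple}, whose hypotheses require $t = a/p^b$, but the theorem you are proving treats general log-$\Q$-Gorenstein triples, where $t$ can be any positive rational (indeed, any positive real). For instance, if $p = 2$ and $t = 1/3$ there is no expression $t = a/p^b$, so \autoref{thm.ComputationOfTauSimple} simply does not apply. The paper avoids this by first writing $t = a/(p^b(p^c - 1))$, applying the Frobenius transformation rule from \autoref{lem.TransformationRulesForParameterTestModules}(b) to replace $(\Delta, t)$ by $(p^b(K_X+\Delta) - K_X,\, p^b t)$, and then invoking \autoref{thm.EffectiveTauComputationGeneral1}, whose hypotheses accommodate denominators of the form $p^b(p^c-1)$. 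Without that reduction (or a replacement for it), your argument only establishes the theorem for $t$ whose denominator is a pure power of $p$. One cannot easily rescue this by a right-continuity argument in $t$, because the resulting monotonicity of the alteration image points in the wrong direction when $t$ is a jumping number.

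There is also a second, non-fatal divergence from the paper worth noting. You propose to show that the surjection $\gamma_* \omega_W(\lceil -\gamma^*\Theta\rceil) \twoheadrightarrow \tau(\omega_Y, \Theta)$ remains surjective after twisting and applying $\pi_*$, by killing $R^1\pi_*$ of the twisted kernel via relative Serre/Fujita vanishing for $e \gg 0$. That argument is correct (the twist $(1-p^e)\Theta - p^e t G$ differs from a large multiple of the $\pi$-ample divisor $-tG$ by the $\pi$-trivial $\Theta$, so relative Serre vanishing applies), but it is more than is needed: the paper does not attempt to preserve the surjection under $\pi_*$. It is content to deduce only the containment $\tau(X,\Delta,\ba^t) \supseteq \Tr_\rho(\cdots)$ after pushing forward, and then closes the loop with the ``easy containment'' $\tau(X,\Delta,\ba^t) \subseteq \Tr_\rho(\cdots)$ which holds for any alteration (\cf \cite[Proposition~4.2]{BlickleSchwedeTuckerTestAlterations} or the first half of \autoref{prop.TauDescriptionViaBlowup}). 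Your vanishing argument is sound and would give a sharper intermediate statement, but the extra work buys nothing for the stated theorem; and as written, it is tied to the restricted $t = a/p^b$ setting that needs to be fixed first.
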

\begin{proof}
Without loss of generality we may assume that $X$ is affine since any two alterations may be dominated by a third.
By \autoref{lem.TransformationRulesForParameterTestModules}(b) we know for $\Gamma = K_X + \Delta$ that
\[
\Tr_{F^b}\big( F^b_* \tau(X, p^b(K_X + \Delta) - K_X, \ba^{p^b t} \big)= \Tr_{F^b} \big(F^b_* \tau(\omega_X, p^b \Gamma, \ba^{p^b t})\big) = \tau(\omega_X, \Gamma, \ba^t) = \tau(X, \Delta, \ba^t)
\]
and so,
as in the start of the proof of \autoref{thm.EffectiveTauComputationGeneral1}, by choosing a sufficiently divisible $c > 0$ and replacing $\Delta$ by $ p^b(K_X + \Delta) - K_X$ and $t$ by $p^b t$, we may assume that $(p^c - 1)t$ is an integer and $(p^c - 1)(K_X + \Delta)$ is Cartier.

Set $\pi : Y \to X$ to be the normalized blowup of $\ba$ with $\ba  \O_Y = \O_Y(-G)$.  We then know by \autoref{thm.EffectiveTauComputationGeneral1} that there exists an $e = nc > 0$, such that
\[
\tau(X, \Delta, \ba^t) = \Tr_{F^e_X}\Bigg((F^e_X)_* \pi_* \Big(\tau(\omega_Y, \pi^*(K_X + \Delta) + tG) \otimes_Y \O_Y\big( (1-p^e)( \pi^* (K_X + \Delta) + tG)\big)\Big) \Bigg).
\]
By \autoref{lem.TransformationRulesForParameterTestModules} (which relies upon  \cite{deJongAlterations}), there exists $\eta : W \to Y$ a regular alteration such that
\[
\tau(\omega_Y, \pi^*(K_X + \Delta) + tG) = \Tr_{\eta} \Big(\eta_* \O_W\big(\lceil K_W - \eta^*\pi^*(K_X + \Delta) - t\eta^*G \rceil\big) \Big).
\]
Setting $H' = \eta^* G$ so that $\ba  \O_W = \O_W(-H')$, we observe that
\[
\begin{array}{rl}
  & \tau(\omega_Y, \pi^*(K_X + \Delta) + tG) \otimes_Y \O_Y\big( (1-p^e)( \pi^* (K_X + \Delta) + tG)\big) \\
= & \Tr_{\eta} \Big(\eta_* \O_W\big(\lceil K_W - \eta^*\pi^*(K_X + \Delta) - t H' - (p^e - 1) \eta^* \pi^* (K_X + \Delta) - (p^e - 1)tH' \rceil\big) \Big)\\
= & \Tr_{\eta} \Big(\eta_* \O_W\big(\lceil K_W - \eta^*\pi^*(F^e_X)^*(K_X + \Delta) - t p^e H' \rceil\big) \Big).
\end{array}
\]
Set $\rho = \eta \circ \pi \circ (F^e_X)$ and $H = p^e H'$.  Pushing forward by $\pi_*$ and applying $\Tr_{\pi \circ (F^e_X)}$ implies immediately that
\[
\tau(X, \Delta, \ba^t) \supseteq \Tr_{\rho} \big(\rho_* \O_W( \lceil K_W - \rho^*(K_X + \Delta) - tH \rceil)\big).
\]
Note we only get containment since we do not know that we still have a surjection after pushing forward by $\pi$ (we only have the surjection on $Y$ locally).

However, the reverse containment $\subseteq$ is the ``easy containment'' which always holds by the defining property of the test ideal, \cf the proof of \cite[Proposition 4.2]{BlickleSchwedeTuckerTestAlterations} or the first half of the proof of \autoref{prop.TauDescriptionViaBlowup}.  The intersection statement then follows immediately from the same ``easy containment'' as well.
\end{proof}

\begin{remark}
\label{rem.NonVarietyDescription1}
If $X$ is simply a normal $F$-finite scheme satisfying \autoref{conv.MainConvention}, instead of a variety, a variant of Theorem 6.1 still holds.  Indeed, there still exists a (not necessarily regular) alteration
$\rho : W \to X$
with $\ba \O_W = \O_W(-H)$ such that $\Tr_{\rho} \big(\rho_* \O_W( \lceil K_W - \rho^*(K_X + \Delta) - tH \rceil)\big) = \tau(X, \Delta, \ba^t)$.  The proof is unchanged, except that the use of \cite{deJongAlterations} is omitted.
\end{remark}

In \cite{SchwedeTuckerZhang}, the authors showed the stronger statement that there exists a single (regular) alteration allowing the computation of the test ideal even as the coefficient $t$ was allowed to vary (including when $t$ takes on irrational values).  The same methods go through, at least when combined with the Skoda type theorems we have developed in this paper, and so we obtain the following generalization.

\begin{theorem}
\label{thm.TestViaAlterations2}
With notation as above, there exists a regular alteration $\rho : W \to X$, $\ba  \O_W = \O_W(-H)$, independent of $t$ such that
\begin{equation}
\label{eq.ConstantTauForSingleAlteration}
\Tr_{\rho} \big(\rho_* \O_W( \lceil K_W - \rho^*(K_X + \Delta) - tH \rceil)\big) = \tau(X, \Delta, \ba^t).
\end{equation}
for all $t \in \bR$, $t > 0$.
\end{theorem}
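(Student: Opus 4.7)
The plan is to combine \autoref{thm.TestViaAlterations1} with discreteness of $F$-jumping numbers (\autoref{thm.DiscAndRat}) and Skoda's theorem, following the strategy used in \cite{SchwedeTuckerZhang} for principal ideals. The first step is to reduce to a bounded range of $t$. Let $r$ be the number of generators of a reduction of $\ba$. The triple version of \autoref{prop.BasicSkoda} gives $\tau(X, \Delta, \ba^t) = \ba \cdot \tau(X, \Delta, \ba^{t-1})$ for all $t \geq r$, and a direct computation using $\ba \cdot \O_W = \O_W(-H)$ together with the projection formula shows that the right-hand side of \eqref{eq.ConstantTauForSingleAlteration} satisfies the same recursion. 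Hence it suffices to verify \eqref{eq.ConstantTauForSingleAlteration} for $t \in (0, r]$.

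By \autoref{thm.DiscAndRat}, the $F$-jumping numbers in $(0, r]$ form a finite set $\{t_1 < \cdots < t_n\}$ of rational numbers. Applying \autoref{thm.TestViaAlterations1} for each $t_i$ produces a regular alteration $\rho_i \colon W_i \to X$ with $\ba \cdot \O_{W_i} = \O_{W_i}(-H_i)$ realizing $\tau(X, \Delta, \ba^{t_i})$. Using \cite{deJongAlterations}, I would then take a common regular alteration $\rho \colon W \to X$ factoring through each $\rho_i$ with $\ba \cdot \O_W = \O_W(-H)$ locally principal. The easy containment
\[
\tau(X, \Delta, \ba^t) \subseteq \Tr_\rho\big(\rho_*\O_W(\lceil K_W - \rho^*(K_X + \Delta) - tH \rceil)\big)
\]
holds for every $t > 0$ just as in the proof of \autoref{thm.TestViaAlterations1}. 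For the reverse containment at $t = t_i$, write $\rho = \rho_i \circ g_i$ and push down along $g_i$ to compare the image with the image under $\rho_i$, which by construction equals $\tau(X, \Delta, \ba^{t_i})$. Equality at each $t_i$ then propagates to all $t \in (0, r]$ because both sides are decreasing functions of $t$ that are constant on the open intervals $(t_i, t_{i+1})$: the test ideal side by \autoref{lem.EasyEpsilonJump}, and the alteration side because $\lceil -tH \rceil$ is locally constant on a neighborhood of each non-rational point and right-continuous at rational jumps.

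The main obstacle is the reverse inclusion at each $t_i$, because dominating $\rho_i$ by $\rho$ does not obviously preserve the image under trace: the Grothendieck trace $\Tr_{g_i} \colon (g_i)_*\omega_W \to \omega_{W_i}$ need not be surjective when $g_i$ is an alteration between singular varieties, and passing to regular $W, W_i$ does not immediately resolve this since one works with twists of $\omega$ by fractional divisors. The cleanest workaround is to carry out the comparison on the common normalized blow-up $\pi \colon Y \to X$ of $\ba$ that both $\rho$ and $\rho_i$ dominate, and to unwind through the effective computation of \autoref{thm.EffectiveTauComputationGeneral1}. Concretely, one shows that any sufficiently large regular alteration $\eta \colon W \to Y$ satisfies
\[
\Tr_{\pi \circ \eta}\big(\eta_* \pi^*(\cdots)\big) \;=\; \Tr_\pi\big(\pi_*(\tau(\omega_Y, \ldots) \otimes (\cdots))\big) \;=\; \tau(X, \Delta, \ba^{t_i})
\]
for each of the finitely many critical twists corresponding to the $t_i$; choosing $\eta$ large enough to accommodate all such twists simultaneously (possible by the finiteness from \autoref{thm.DiscAndRat} combined with the method of \cite{BlickleSchwedeTuckerTestAlterations}) then produces a single $\rho = \pi \circ \eta$ which works at every $t_i$, and hence, by the propagation argument above, at every $t > 0$.
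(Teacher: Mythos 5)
Your overall strategy — reduce to a bounded range via Skoda, then handle finitely many jumping numbers via \autoref{thm.TestViaAlterations1} and a common alteration — is the same as the paper's, and the ``sandwiching'' mechanism you use to propagate across intervals can be made to work. However, there is a serious gap in the reduction step.

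You assert that ``a direct computation using $\ba \cdot \O_W = \O_W(-H)$ together with the projection formula shows that the right-hand side of \eqref{eq.ConstantTauForSingleAlteration} satisfies the same recursion'' $\tau(X,\Delta,\ba^t)=\ba\cdot\tau(X,\Delta,\ba^{t-1})$. This is false. The projection formula does not apply: $\ba$ is not a line bundle on $X$, and $\O_W(-H)$ is not the pullback of a sheaf on $X$. What one gets for free is only the easy containment
\[
\ba\cdot\Tr_\rho\big(\rho_*\O_W(\lceil K_W-\rho^*(K_X+\Delta)-(t-1)H\rceil)\big) \subseteq \Tr_\rho\big(\rho_*\O_W(\lceil K_W-\rho^*(K_X+\Delta)-tH\rceil)\big),
\]
coming from $\ba\cdot\rho_*\sF\subseteq\rho_*(\ba\O_W\cdot\sF)$. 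The reverse containment is precisely Skoda's theorem for the pushforward, and it is not automatic: in the paper it is established via a Koszul complex on $W$ built from the pulled-back sections $(\rho^*s_j)^{p^e}$ combined with relative Serre vanishing to kill the higher direct images. This vanishing requires $-H$ to be $\rho$-ample, which fails for a general regular alteration; the paper therefore first proves the Skoda-type claim for $\rho$ a \emph{finite cover of the normalized blowup} (typically not regular), and only afterward passes to a regular alteration, noting that the ``easy'' containment in \eqref{eq.ConstantTauForSingleAlteration} always holds and that further alterations can only shrink the trace image, so establishing equality for some alteration gives it for a regular one as well. Your proposal, which fixes a regular $W$ from the outset and invokes the projection formula, does not address any of this and the unbounded case collapses.

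A secondary, smaller issue is in the propagation argument for the bounded case: the alteration side is not constant on the open intervals $(t_i,t_{i+1})$ for the reason you give, since $\lceil -tH\rceil$ will jump at plenty of rational $t$ inside the interval (the set of $F$-jumping numbers is only a subset of the rationals). The correct way to see constancy on $[t_i, t_{i+1})$ is the chain
\[
\tau(X,\Delta,\ba^t) = \tau(X,\Delta,\ba^{t_i}) = \Tr_\rho(\,\cdots\, t_i\cdots) \supseteq \Tr_\rho(\,\cdots\, t\cdots) \supseteq \tau(X,\Delta,\ba^t),
\]
using right-continuity of the test ideal (\autoref{lem.EasyEpsilonJump}), monotonicity of the trace image in $t$, and the always-valid easy containment — which forces equality throughout, and in particular \emph{deduces} constancy of the alteration side rather than assuming it.
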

\begin{proof}
It is sufficient to prove the theorem for $X$ affine.
Essentially following the same argument as in \cite[Lemma 4.1, Theorem 4.2]{SchwedeTuckerZhang}, we begin with the following claim.

\begin{claim}
\label{clm.AdjacentJumpingNumbersConstant}
For any two adjacent $F$-jumping numbers, $t_0 < t_1$, there is an alteration $\rho : W \to X$ such that \autoref{eq.ConstantTauForSingleAlteration} holds for all $t_0 \leq t < t_1$.
\end{claim}

\begin{proof}[Proof of claim] (\textit{cf.} \cite[Lemma 4.1]{SchwedeTuckerZhang})
We know that talking about adjacent $F$-jumping numbers make sense because the set of $F$-jumping numbers have no limit points.  We also know that $t_0, t_1 \in \bQ$.  For varieties, this is described in \cite[Remark 3.4]{SchwedeTuckerZhang} but it also of course follows from \autoref{thm.DiscAndRat}.  If one wants to work with schemes as in \autoref{rem.NonVarietyDescription2} below, one truly needs \autoref{thm.DiscAndRat}.

Regardless, for some $\mu : W \to X$, we have
\[
\tau(X, \Delta, \ba^{t_0}) = \Tr_{\mu} \big(\mu_* \O_W( \lceil K_W - \mu^*(K_X + \Delta) - t_0 H \rceil)\big)
\]
where $\ba \cdot \O_W = \O_W(-H)$.
Fix $t'$ such that $t < t' < t_1$.  Then we know for some further $\mu' : W' \to X$ we have
\[
\tau(X, \Delta, \ba^{t'}) = \Tr_{\mu'} \big(\mu'_* \O_{W'}( \lceil K_{W'} - \mu'^*(K_X + \Delta) - t' H' \rceil)\big)
\]
where $\ba \cdot \O_{W'} = \O_{W'}(-H')$.
Now, we simply observe that
\[
\begin{array}{rl}
  & \tau(X, \Delta, \ba^t)\\
=  & \tau(X, \Delta, \ba^{t_0})\\
= & \Tr_{\mu} \big(\mu_* \O_W( \lceil K_W - \mu^*(K_X + \Delta) - t_0 H \rceil)\big) \\
\supseteq & \Tr_{\mu} \big(\mu_* \O_W( \lceil K_W - \mu^*(K_X + \Delta) - t H \rceil)\big) \\
\supseteq & \Tr_{\mu} \big(\mu_* \O_W( \lceil K_W - \mu^*(K_X + \Delta) - t' H \rceil)\big) \\
\supseteq & \Tr_{\mu'} \big(\mu'_* \O_{W'}( \lceil K_{W'} - \mu'^*(K_X + \Delta) - t' H' \rceil)\big) \\
= & \tau(X, \Delta, \ba^{t'})\\
= & \tau(X, \Delta, \ba^t)
\end{array}
\]
Thus all the terms are equal and the first claim is proven.  We also note again that, if we do not require $W$ to be regular, we may assume that $W$ is a finite cover of  the normalized blowup $Y$ of $\ba$ by \cite[Theorem 4.6]{BlickleSchwedeTuckerTestAlterations}.
\end{proof}
Theorem \ref{eq.ConstantTauForSingleAlteration} now follows for bounded values of $t$, since then there are only finitely many jumping numbers that need be considered.  By the claim, we may pick a common alteration that works for all of them.
Next we attack the case of unbounded $t$.  The strategy is roughly to use Skoda's theorem on both sides of equation \autoref{eq.ConstantTauForSingleAlteration}.

First, by Skoda's theorem for the test ideal \cite[Theorem 4.1]{HaraTakagiOnAGeneralizationOfTestIdeals}, for any $t$ larger than $r$, the number of generators of $\ba$, we have $\tau(X, \Delta, \ba^t) = \tau(X, \Delta, \ba^{r - 1 + \{ t \}}) \cdot \ba^{\lfloor t \rfloor - r + 1}$.  We need the same result for the pushforward from the alteration as well.  In particular, we need a variant of Skoda's theorem for the pushforward.  Note that we must do without a test element divisor $D$ that appeared in the discussion before \autoref{prop.BasicSkoda}.  We accomplish this below -- but on a possibly non-regular alteration.

\begin{claim}
\label{claim:skodapushforwardanalog}
There exists a sufficiently large alteration $\rho : W \to X$, satisfying \autoref{eq.ConstantTauForSingleAlteration} for $0 < t \leq r + 1$  (where $r$ is the number of generators of $\ba$), and such that instead for all $t \geq r + 1$ we have
\[
\ba \cdot \Tr_{\rho} \Big(\rho_* \O_W( \lceil K_W - \rho^*(K_X + \Delta) - (t - 1)H \rceil) \Big) = \Tr_{\rho} \Big(\rho_* \O_W( \lceil K_W - \rho^*(K_X + \Delta) - tH \rceil) \Big).
\]
\end{claim}
\begin{proof}[Proof of claim]
Choose $\rho : W \to X$ to be an alteration such that \autoref{eq.ConstantTauForSingleAlteration} holds for all $0 < t \leq r + 1$.  Observe that by \cite[Theorem 4.6]{BlickleSchwedeTuckerTestAlterations} and the proof of \autoref{clm.AdjacentJumpingNumbersConstant}, we can assume that $\rho : W \to X$ is a finite cover of the normalized blowup $Y$ of $\ba$.  In particular, we can assume that $-H$ is $\rho$-ample.  Additionally we can assume that if $W \xrightarrow{\mu} Z \xrightarrow{\nu} X$ is the Stein factorization of $\rho$, then $\nu^*(K_X + \Delta)$ is an integral Cartier divisor.

Fix $s_1, \dots, s_r \in \ba$ to be our generators.
It follows that $\pi^{*} s_{1}, \ldots, \pi^{*}s_{r}$ are globally generating sections of $\O_{W}(-H)$.  This implies $(\pi^{*}s_{1})^{p^{e}}, \ldots, (\pi^{*}s_{r})^{p^{e}}$ globally generate $\O_{W}(-p^{e}H)$ for any $e > 0$, and we may form the corresponding Koszul complex
\begin{equation}
\label{eq.KoszulComplexInTauSingleAlteration}
0 \to \sF_{r} \to \sF_{r-1} \to \cdots \sF_{1} \to \sF_{0} \to 0
\end{equation}
where $\sF_{i} = \O_{W}(ip^{e}H)^{\oplus {r \choose i}}$ and each of the maps are essentially given (up to sign) as multiplication by the sections $(\rho^{*}s_{j})^{p^{e}}$. Since this complex is a locally free resolution of the (flat) sheaf $\sF_{0} = \O_{W}$, this complex remains exact after tensoring by any quasicoherent sheaf on $W$.

When $t  \geq r+1$ and we tensor by $\O_{W}( \lceil K_{W} - p^{e}(\rho^{*}(K_{X} + \Delta) + t H) \rceil)$, the $i$-th entry in the complex becomes $\sG_{i} = \O_{W}( \lceil K_{W} -p^{e}(\pi^{*}(K_{X}+\Delta) + (t - i)H) \rceil)^{\oplus {r \choose i}}$.  Now we consider $\lceil -\lambda H\rceil$ as $\lambda$ varies and observe that there are only finitely many divisors $\lceil -\{ \lambda \} H \rceil$.  Therefore, since $\lceil -\lambda H\rceil = \lceil -\{\lambda\} H \rceil - \lfloor \lambda \rfloor H$ (and so takes finitely many values up to twisting by relatively ample divisors), and since $\rho^*(K_X + \Delta)$ is pulled back from a Cartier divisor on $Z$ (the Stein factorization of $W$), we observe for $e \gg 0$ (which we can pick independently of $t$) that by Serre vanishing and the projection formula, $R^{j} \rho_{*} \sG_{i} = 0$ for all $i$ and any $j > 0$ since $t - r \geq 1$.  Hence our complex \autoref{eq.KoszulComplexInTauSingleAlteration} remains exact after applying $\rho_{*}( \blank)$, and as $F_{*}^{e}(\blank)$ is exact (since $F$ is finite) we have that the complex
\[
0 \to F^{e}_{*}\rho_{*} \sG_{r} \to F^{e}_{*}\rho_{*} \sG_{r-1} \to \cdots \to F^{e}_{*}\rho_{*} \sG_{1} \to F^{e}_{*}\rho_{*} \sG_{0} \to 0
\]
is exact on $X$.  Furthermore, after having applied $F^{e}_{*}( \blank)$, we may view the arrows as given by multiplying by $s_{1}, \ldots, s_{r}$.  Taking images under $\Tr_{\rho}$ and $\Tr_{F^e_X}$ preserves exactness on the right, giving a surjection for $e \gg 0$ (again, independent of $t$)
\[
\begin{array}{rl}
  & \Tr_{F^e_X}\Big(F^e_* \Tr_{\rho}\Big( \rho_* \O_W\big(\lceil K_W - (F^e)^*\rho^*(K_X + \Delta) - t (F^e)^* H \rceil\big) \Big) \Big)\\
= & \ba \cdot \Tr_{F^e_X}\Big(F^e_* \Tr_{\rho}\Big( \rho_* \O_W\big(\lceil K_W - (F^e)^*\rho^*(K_X + \Delta) - (t - 1) (F^e)^* H \rceil\big) \Big) \Big).
\end{array}
\]
Now replacing $\rho$ by $\rho \circ F^e$ (again, observing that we have successfully chosen $e$ independent of $t$), we have proven our claim.
\end{proof}

Let us now return to the proof of Theorem \ref{thm.TestViaAlterations2}. Note that the containment $\supseteq$ of \autoref{eq.ConstantTauForSingleAlteration} always holds, and that further alterations can only shrink the image.
Therefore, if we can prove that \autoref{eq.ConstantTauForSingleAlteration} holds for any alteration, then it also holds for some regular alteration as well.  To that end, let $\rho : W \to X$ be the alteration constructed in Claim \ref{claim:skodapushforwardanalog}. For any $t \geq r+1$, we have that
\[
\begin{array}{rl}
& \Tr_{\rho}\Big( \rho_* \O_W\big(\lceil K_W - \rho^*(K_X + \Delta) - t H \rceil\big) \Big) \\
= & \ba^{\lfloor t \rfloor - r + 1} \cdot \Tr_{\rho}\Big( \rho_* \O_W\big(\lceil K_W - \rho^*(K_X + \Delta) - (r - 1 + \{ t \}) H \rceil\big) \Big)\\
= & \ba^{\lfloor t \rfloor - r + 1} \cdot \tau(X, \Delta, \ba^{r - 1 + \{ t \}}) \\
= & \tau(X, \Delta, \ba^t).
\end{array}
\]
This completes the proof of Theorem \ref{thm.TestViaAlterations2}.
\end{proof}

\begin{remark}
\label{rem.NonVarietyDescription2}
As before, this result still holds for normal $F$-finite integral schemes satisfying \autoref{conv.MainConvention}, so long as one gives up the requirement that the alteration be regular.
\end{remark}

In \cite{BlickleSchwedeTuckerTestAlterations,SchwedeTuckerZhang}, when $\ba$ is principal, we showed that the alterations chosen could be chosen to be generically separable.  Thus, we are led ask the following question.

\begin{question}
Can one always choose a separable alteration for \autoref{thm.TestViaAlterations2} or even for \autoref{thm.TestViaAlterations1}?
\end{question}

\section{Vector subspaces of global sections}

We now define certain global variants of the test ideal as subspaces of sections of line bundles.  Indeed, if $L$ is very ample inducing a projectively normal embedding $X \subseteq \bP^n_k$, we shall see that these subspaces carry the information of various graded pieces of the test ideal on the affine cone over $X$ (see for example \cite{SmithFujitaFreenessForVeryAmple} and \cite{HaraACharacteristicPAnalogOfMultiplierIdealsAndApplications}).  However, they apply even without ampleness assumptions on the line bundles.  As mentioned in the introduction, these subspaces first appeared in unpublished work of C.~Hacon and the first author.

\begin{definition}
\label{def.P0Definition}
Suppose that $(X, \Delta, \ba^t)$ is a triple where $X$ is a proper variety and that $L$ is a Cartier divisor on $X$.  Then we define
\[
P^0\big(X, \O_X(L) \tensor \tau(X, \Delta, \ba^t)\big)
\]
to be
\[
\displaystyle
\bigcap_{\substack{\pi : Y \shortrightarrow X}} \bigcap_{D \subseteq Y} \bigcap_{e_{0}\geq 0}  \left( \sum_{e \geq e_0} \Tr_{\pi \circ F^e} \Big(H^0\big(Y, \Big( F^e_* \O_Y(\lceil K_Y - p^e \pi^*(K_X + \Delta + tG_Y - L)  -  D \rceil)  \big)\Big)\Big)\right)
\]
where $\pi$ ranges over all proper birational maps such that $Y$ is normal with $\ba  \O_Y = \O_Y(-G_Y)$ invertible,  and $D$ runs over all effective divisors on $Y$.
So as to justify the notation somewhat, we remark that this is a subspace of the vector space
\[
H^0\big(X, \O_X(L) \tensor \tau(X, \Delta, \ba^t)\big) \subseteq H^0(X, \O_X(L) ).
\]
\end{definition}

\begin{lemma}
\label{lem.StabilizingImageForP0}
The intersection in \autoref{def.P0Definition} stabilizes.  In particular, there is a choice of $\pi$, $D$, and $e_0 \geq 0$, such that
\[
\begin{array}{rl}
& P^0\big(X, \O_X(L) \tensor \tau(X, \Delta, \ba^t)\big)\\
 = & \sum_{e \geq e_0} \Tr_{\pi \circ F^e} \Big(H^0\big(Y, \Big( F^e_* \O_Y(\lceil K_Y - p^e \pi^*(K_X + \Delta + tG_Y - L)  - D \rceil)  \big)\Big)\Big).
 \end{array}
\]
\end{lemma}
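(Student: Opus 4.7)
The plan is to realize the intersection as one of subspaces of the finite-dimensional vector space $V := H^0(X, \O_X(L))$, and then reduce to showing that the family of triples $(\pi, D, e_0)$ is downward directed with respect to subspace containment. Writing
\[
S(\pi, D, e_0) := \sum_{e \geq e_0} \Tr_{\pi \circ F^e}\Big(H^0\big(Y, F^e_* \O_Y(\lceil K_Y - p^e \pi^*(K_X + \Delta + tG_Y - L) - D\rceil)\big)\Big)
\]
for the summand under the intersection, each $S(\pi, D, e_0)$ is a subspace of $V$, so any intersection of such subspaces already agrees with some finite subintersection. Hence it will suffice to show that, given finitely many triples $(\pi_j, D_j, e_{0,j})_{j=1}^{N}$, there is a single triple $(\pi, D, e_0)$ with $S(\pi, D, e_0) \subseteq S(\pi_j, D_j, e_{0,j})$ for every $j$.

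For the parameter $e_0$, I would simply set $e_0 := \max_j e_{0,j}$, using that $S(\pi, D, e_0)$ is manifestly descending in $e_0$. For the morphism, I would let $\pi : Y \to X$ be the normalization of the dominant component of $Y_1 \times_X \cdots \times_X Y_N$; it factors through each $\pi_j$ via a proper birational $\mu_j : Y \to Y_j$ with $\ba \cdot \O_Y = \O_Y(-G_Y)$ invertible and $G_Y = \mu_j^* G_{Y_j}$. The remaining and main step will be to exhibit a single effective divisor $D$ on $Y$ such that for each $j$ and every $e \geq e_0$,
\[
(\mu_j)_* \O_Y\big(\lceil K_Y - p^e \mu_j^* \Gamma_j - D \rceil\big) \subseteq \O_{Y_j}\big(\lceil K_{Y_j} - p^e \Gamma_j - D_j \rceil\big),
\]
where $\Gamma_j := \pi_j^*(K_X + \Delta + tG_{Y_j} - L)$. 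Granted this containment, functoriality of the Grothendieck trace $\Tr_{\pi \circ F^e} = \Tr_{\pi_j \circ F^e} \circ F^e_* (\mu_j)_* \Tr_{\mu_j}$ will force $S(\pi, D, e_0) \subseteq S(\pi_j, D_j, e_{0,j})$ as required.

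Since $\mu_j$ is birational, I can identify $\Tr_{\mu_j}$ with the natural inclusion of rational sections; together with $(\mu_j)_* K_Y = K_{Y_j}$ and $(\mu_j)_* \mu_j^* \Gamma_j = \Gamma_j$, the divisor inequality needed for the displayed inclusion reduces to $(\mu_j)_* D$ dominating $D_j$ plus a correction absorbing the fractional difference $\lceil -p^e \mu_j^* \Gamma_j \rceil - \mu_j^* \lceil -p^e \Gamma_j \rceil$. The main obstacle I anticipate is verifying that such a correction can be chosen uniformly in $e$ by a single effective divisor $E$ on $Y$ — exactly in the spirit of the uniform-in-$e$ bound produced in \autoref{lem.UniformImages}. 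Once such an $E$ is in hand, any $D$ dominating $\sum_j \mu_j^* D_j + E$ witnesses directedness, and the lemma follows.
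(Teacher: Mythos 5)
Your proof is correct and follows essentially the same route as the paper: both hinge on finite-dimensionality of $H^0(X,\O_X(L))$ together with \autoref{lem.UniformImages} to obtain the uniform-in-$e$ comparison of pushforwards across birational models. The only difference is organizational — you verify directedness of the full family of triples $(\pi, D, e_0)$ at once, whereas the paper ``peels off'' the three parameters one at a time (stabilize over $\pi$, then over $D$, then over $e_0$) — but this does not change the substance of the argument.
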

\begin{proof}  Observe that we are taking this intersection in a finite dimensional vector space,  simplifying things considerably.  The idea is simply to peel off each intersection one by one in the following order.
\begin{itemize}
\item[(a)]  We will argue below that passing to a further birational map $\eta : Z \to Y$ can only shrink the images in question. Granting this, we can fix a $\pi$ sufficiently large so that the image stabilizes.
\item[(b)]  For fixed $\pi : Y \to X$, choosing a larger $D$ obviously can only shrink the images.  Thus, we can find $D$ sufficiently large so that the image stabilizes.
\item[(c)]  For fixed $\pi$ and $D$, choosing larger $e_0$ can again obviously only shrink the image.  Thus, we can find $e_0$ sufficiently large so that the image stabilizes.
\end{itemize}

Thus, to verify the lemma, it remains only to justify  the choice of $\pi : Y \to X$ in (a).  Indeed, consider a further birational map $\eta : Z \to Y$ and the factorization
\[
\underbrace{Z \xrightarrow{\eta} Y \xrightarrow{\pi} X }_{\gamma}
\]
and fix a divisor $D_Y$ on $Y$.  By \autoref{lem.UniformImages}, there exists a divisor $D_Z$ on $Z$ such that we have an inclusion
\begin{equation*}
\label{eq.ClaimPushforwardContainment}
\begin{array}{rl}
& \eta_* \O_Z(\lceil K_Z - p^e \gamma^*(K_X + \Delta + tG_Y - L) - D_Z \rceil) \\
\subseteq & \O_Y(\lceil K_Y - p^e \pi^*(K_X + \Delta + tG_Y - L)  - D_Y \rceil)
\end{array}
\end{equation*}
for all $e$.  But now we see that choosing further $Z \to Y \to X$ only shrinks the images (after intersecting over all divisors $D$) as claimed, and so the Lemma is proven.
\end{proof}

The next proposition says that in fact we never have to take a $Y$ larger than the normalized blow-up of $\ba$ in order to compute $P^{0}$. Of course, if one prefers to use a different birational model instead (\textit{e.g.} a log resolution at hand), one is certainly still free to do so.

\begin{proposition}
The vector space $P^0 \big(X, \O_X(L) \tensor \tau(X, \Delta, \ba^t) \big)$ can be computed just from $\pi : W \to X$ where $W$ is the normalized blowup of $\ba$.  In particular, the subspace $P^0 \big(X, \O_X(L) \tensor \tau(X, \Delta, \ba^t) \big)$ is equal to
\medskip
\begin{equation}
\label{eq.StabilizeOnNormalBlowup}
\displaystyle
\bigcap_{D \geq 0} \bigcap_{e_0 \geq 0} \left( \sum_{e \geq e_0} \Tr_{\pi \circ F^e} \Big(H^0\big(W, \Big( F^e_* \O_W(\lceil K_W - p^e \pi^*(K_X + \Delta + tG_W - L)  - D \rceil)  \big)\Big)\Big)\right)
\end{equation}
where $D$ runs over all effective divisors on $W$.
\end{proposition}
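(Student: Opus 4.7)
The plan is to prove the two containments separately.  The containment of $P^0\bigl(X, \O_X(L) \tensor \tau(X, \Delta, \ba^t)\bigr)$ into the quantity in \autoref{eq.StabilizeOnNormalBlowup} is immediate, since the latter is the intersection from \autoref{def.P0Definition} restricted to the single birational model $Y = W$, hence is an intersection over a smaller family of data and therefore a potentially larger subspace.

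For the reverse containment, I would proceed as follows.  Fix any proper birational $\sigma : Y \to X$ with $Y$ normal and $\ba \cdot \O_Y$ invertible, any effective Cartier divisor $D_Y$ on $Y$, and any $e_0 \geq 0$.  By the universal property of the blowup (and the normalization), $\sigma$ factors as $\sigma = \pi \circ \mu$ for some $\mu : Y \to W$, and then $G_Y = \mu^* G_W$.  The heart of the argument is to produce an effective divisor $D_W$ on $W$, independent of $e$, such that the $W$-term of \autoref{eq.StabilizeOnNormalBlowup} corresponding to $(D_W, e_0)$ is contained in the $Y$-term of \autoref{def.P0Definition} corresponding to $(D_Y, e_0)$.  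To this end I would set $\Gamma := \pi^*(K_X + \Delta) + tG_W - \pi^*L$, a $\bQ$-Cartier divisor on $W$, and invoke the second assertion of \autoref{lem.UniformImages} (applied to $\mu$ with $E = D_Y$), which supplies an effective $D_W$ on $W$, \emph{independent of $e$}, such that
\[
\mu_* \O_Y\bigl(\lceil K_Y - p^e \mu^* \Gamma - D_Y \rceil\bigr) \supseteq \O_W\bigl(\lceil K_W - p^e \Gamma - D_W \rceil\bigr)
\]
for every $e \geq 0$.  Using that $\mu$ commutes with Frobenius ($\mu \circ F^e_Y = F^e_W \circ \mu$) and that the Grothendieck trace factors as $\Tr_{\sigma \circ F^e} = \Tr_{\pi \circ F^e} \circ (\pi \circ F^e)_* \Tr_\mu$ (where $\Tr_\mu$ is simply the inclusion $\mu_*\omega_Y \subseteq \omega_W$ since $\mu$ is birational), I would push this sheaf inclusion forward by $F^e_*$ on $W$, take $H^0$, and apply $\Tr_{\pi \circ F^e}$ to obtain the desired containment term-by-term.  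Summing over $e \geq e_0$ then shows the $W$-quantity in \autoref{eq.StabilizeOnNormalBlowup} lies in every term of the $P^0$-intersection, and hence in $P^0$.

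The only substantive obstacle is producing a single $D_W$ that works uniformly in $e$; this is precisely the content of \autoref{lem.UniformImages}, and without it the naive bound on $\mu_* \O_Y(\lceil K_Y - p^e \mu^*\Gamma - D_Y\rceil)$ would deteriorate as $e$ grows.  The remaining steps are routine bookkeeping: verifying $G_Y = \mu^* G_W$, that the roundings in \autoref{def.P0Definition} match those of the Lemma (using integrality of $K_Y$ and $D_Y$), and that the trace factorization through $\mu$ is the inclusion $\mu_*\omega_Y \hookrightarrow \omega_W$ identified via $K(Y) = K(W)$.
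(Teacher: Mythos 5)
Your proposal is correct and takes essentially the same approach as the paper: both reduce to \autoref{lem.UniformImages} to produce a divisor on $W$ that is uniform in $e$, and then factor the Grothendieck trace through $\mu$. The only cosmetic difference is that the paper first invokes \autoref{lem.StabilizingImageForP0} to fix a single $(\gamma, E, e_0)$ that stabilizes the $P^0$-intersection and compares just that one term, whereas you work term-by-term over every $(\sigma, D_Y, e_0)$ in the defining intersection; note also that $D_Y$ need only be an effective Weil divisor (as in \autoref{def.P0Definition} and \autoref{lem.UniformImages}), so your restriction to Cartier $D_Y$ can simply be dropped.
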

\begin{proof}
Suppose that $\gamma : Y \to X$ is a projective birational map with $Y$ normal such that $\ba \O_Y = \O_Y(-G_Y)$ is invertible.  We may also assume that $\gamma : Y \to X$ can be used to stabilize the intersection \autoref{eq.StabilizeOnNormalBlowup}, together with the divisor $E \geq 0$ on $Y$ as in Lemma \ref{lem.StabilizingImageForP0}.   Then $\gamma$ factors through $\pi$ by the universal property of blowing up, so write
\[
\underbrace{Y \xrightarrow{\eta} W \xrightarrow{\pi} X}_{\gamma}.
\]
By \autoref{lem.UniformImages}, there exists an effective divisor $D$ on $W$  such that
\[
\begin{array}{rl}
& \O_W(\lceil K_W - p^e \pi^*(K_X + \Delta + tG_W - L) -  D \rceil) \\
\subseteq & \eta_* \O_Y(\lceil K_Y  - p^e \gamma^*(K_X + \Delta + tG_Y - L)  - E \rceil)
\end{array}
\]
for all $e \geq 0$.
Making $D$ larger is harmless, and so we may further assume that $D$ can be used to stabilize the intersection in \autoref{eq.StabilizeOnNormalBlowup}.

Thus for all $e \geq 0$ we have the factorization
\[
\begin{array}{rl}
& \pi_* \O_W(\lceil K_W - p^e \pi^*(K_X + \Delta + tG_W - L)  - D \rceil) \\
\hookrightarrow & \gamma_* \O_Y(\lceil K_W - p^e \pi^*(K_X + \Delta + tG_W - L)  - E \rceil) \\
\xrightarrow{\Tr_{\gamma}} & \O_X(L).\end{array}
\]
But then it follows that we have containment of the images
\[
\begin{array}{rl}
& \Tr_{\pi \circ F^e} \Big(H^0\big(W, \Big( F^e_* \O_W(\lceil K_W - p^e \pi^*(K_X + \Delta + tG_W - L)  - D \rceil)  \big)\Big)\Big)\\
 \subseteq & \Tr_{\gamma\circ F^e} \Big(H^0\big(Y, \Big( F^e_* \O_Y(\lceil K_W - p^e \pi^*(K_X + \Delta + tG_W - L)  - E \rceil)  \big)\Big)\Big)
\end{array}
\]
which gives that the intersection in \autoref{eq.StabilizeOnNormalBlowup} is contained in $P^0 \big(X, \O_X(L) \tensor \tau(X, \Delta, \ba^t) \big)$.  Observing that the reverse containment is obvious from point (a) in the proof of \autoref{lem.StabilizingImageForP0} above completes the proof.
\end{proof}

Note that the definition of $P^{0}$ has non-trivial content even in comparatively simple settings, \textit{e.g.} when there is no ideal sheaf under consideration. For sufficiently ample adjoint line bundles on smooth varieties, we now show that $P^0$ simply agrees with $H^0$.

\begin{lemma}
\label{lem.PNotIsH0Smooth}
Suppose that $X$ is a smooth projective variety, and $\sL$ is an ample line bundle on $X$.  Then $P^0(X, \omega_X \tensor \sL^n) := P^0(X, \tau(X, -K_X) \tensor \sL^n) = H^0(X, \omega_X \tensor \sL^n)$ for all $n \gg 0$.
\end{lemma}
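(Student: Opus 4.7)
The plan is as follows. The containment $P^0 \subseteq H^0$ is automatic, so I focus on the reverse. Since $P^0(X, \omega_X \otimes \sL^n)$ is an intersection of sums indexed by triples $(\pi \colon Y \to X, D, e_0)$, each already contained in $H^0(X, \omega_X \otimes \sL^n)$, it will suffice to prove, for $n$ above some uniform bound $n_0$, that each such summand coincides with $H^0(X, \omega_X \otimes \sL^n)$. I will accomplish this by exhibiting, for each $(\pi, D, e_0)$, a single $e \geq e_0$ for which the global-section map
\[
\Tr_{\pi \circ F^e} \colon H^0\bigl(Y, \omega_Y(np^e \pi^* A - D)\bigr) \to H^0(X, \omega_X \otimes \sL^n)
\]
is already surjective, where $A$ is the Cartier divisor with $\sL = \O_X(A)$. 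Enlarging $D$ only shrinks the summand, so I may assume $-D$ is $\pi$-ample and then invoke \autoref{prop.RelativeVanishingForTau} (applied with $\Delta = -K_X$, $\ba = \O_X$, $t = 0$, using $\tau(\omega_X) = \omega_X$ on smooth $X$) to guarantee, for all sufficiently large $e$, that $(\pi F^e)_*\omega_Y(np^e \pi^* A - D) \twoheadrightarrow \omega_X \otimes \sL^n$ is a sheaf surjection and $R^i \pi_*\omega_Y(np^e \pi^* A - D) = 0$ for $i > 0$.

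Next I exploit the smoothness of $X$: the exceptional locus of $\pi$ has codimension $\geq 2$ in $X$, so $\pi_*\omega_Y = \omega_X$ and hence $\pi_*\omega_Y(-D) = \omega_X \otimes \mathcal{J}$ for some nonzero ideal sheaf $\mathcal{J} \subseteq \O_X$. By the projection formula, the sheaf surjection rewrites as $F^e_*(\omega_X \otimes \mathcal{J}) \otimes \sL^n \twoheadrightarrow \omega_X \otimes \sL^n$ with kernel $\mathcal{K}'_e \otimes \sL^n$, where $\mathcal{K}'_e := \ker\bigl(\Tr^e \colon F^e_*(\omega_X \otimes \mathcal{J}) \twoheadrightarrow \omega_X\bigr)$. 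Combined with the higher direct image vanishing above, surjectivity on global sections reduces to the vanishing $H^1(X, \mathcal{K}'_e \otimes \sL^n) = 0$ for some $e \geq e_0$, with $n$ uniform in $(\pi, D, e_0)$.

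To obtain this, I plan to compare successive kernels via a Cartier-type short exact sequence on the smooth variety $X$. The classical sequence $0 \to B_1 \to F_*\omega_X \to \omega_X \to 0$ with $B_1$ a fixed coherent sheaf, the factorization $\Tr^{e+1} = \Tr \circ F_*\Tr^e$, and the standard fact that $\mathcal{J}^{[1/p^e]} = \O_X$ once $1/p^e$ drops below the $F$-pure threshold of the nonzero ideal $\mathcal{J}$ (which holds on smooth $X$), together yield a short exact sequence
\[
0 \to F_* \mathcal{K}'_e \to \mathcal{K}'_{e+1} \to B_1 \to 0.
\]
Fujita's vanishing applied to the \emph{fixed} coherent sheaf $B_1$ provides a uniform $n_1 = n_1(X, \sL)$ with $H^1(X, B_1 \otimes \sL^n) = 0$ for $n \geq n_1$; together with the projection formula $F_*\mathcal{K}'_e \otimes \sL^n = F_*(\mathcal{K}'_e \otimes \sL^{np})$, the long exact cohomology delivers
\[
\dim H^1(X, \mathcal{K}'_{e+1} \otimes \sL^n) \leq \dim H^1(X, \mathcal{K}'_e \otimes \sL^{np}).
\]
Iterating from a fixed $e_*$ beyond which the Cartier-type sequence is valid gives $\dim H^1(X, \mathcal{K}'_{e_*+k} \otimes \sL^n) \leq \dim H^1(X, \mathcal{K}'_{e_*} \otimes \sL^{np^k})$; Serre vanishing for the single coherent sheaf $\mathcal{K}'_{e_*}$ then forces the right-hand side to vanish for $k$ sufficiently large. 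Setting $n_0 := n_1$, which depends only on $X$ and $\sL$, completes the plan.

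The main obstacle will be establishing the Cartier-type comparison for the kernels $\mathcal{K}'_e$ and checking the sheaf-level surjectivity $F^e_*(\omega_X \otimes \mathcal{J}) \twoheadrightarrow \omega_X$, both of which rely on smoothness of $X$ through the identification $\mathcal{J}^{[1/p^e]} = \O_X$ for $e$ large and the existence of the Cartier sequence; once these ingredients are in place, the remaining vanishing arguments are routine applications of Fujita and Serre.
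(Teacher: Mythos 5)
Your proposal is correct, but it takes a genuinely different route from the paper's. The paper's proof is a short conceptual argument via section rings: it forms $S = \bigoplus_i H^0(X,\sL^i)$, observes $S$ is normal with an isolated singularity at the vertex (since $X$ is smooth), identifies $P^0(X,\omega_X\tensor\sL^i)$ with the $i$-th graded piece of the parameter test module $\tau(\omega_S)$ (using that the perturbation divisors $D$ can be replaced by divisors of the form $V_X(s)$ for $s$ a test element), and concludes from $\tau(\omega_S)=\omega_S$ away from the vertex that the graded pieces agree for $i\gg 0$. Your argument instead works directly on $X$: after reducing to a surjectivity statement on global sections of a fixed trace map, you identify $\pi_*\omega_Y(-D) = \omega_X\otimes\mathcal{J}$ (here smoothness enters via $\pi_*\omega_Y=\omega_X$), use strong $F$-regularity of $X$ to kill $\mathcal{J}$ after enough Frobenius iterates, and control the relevant $H^1$ through the nine-lemma short exact sequence $0\to F_*\mathcal{K}'_e\to\mathcal{K}'_{e+1}\to B_1\to 0$ built from the Cartier sequence. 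The uniformity in $n$ then comes from Serre/Fujita vanishing for the single fixed sheaf $B_1$, while the triple-dependent pieces are absorbed by choosing $e$ large. Each approach buys something: the paper's is shorter and reveals the structural statement that $P^0$ is literally the graded test module of the cone, while yours is more hands-on, does not pass to the cone, and in principle could be made effective (the bound $n_0$ is determined by Serre vanishing for $B_1$, a computable sheaf). One small point of care in your reduction: when you apply \autoref{prop.RelativeVanishingForTau} with ``$\Delta = -K_X$'', you are really using the parameter-test-module formulation of \autoref{sec.TestVsParam} (the divisor is not effective), which is what the paper's framework intends; this is harmless but worth making explicit.
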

\begin{proof}
We set $S = \oplus_{i \geq 0} H^0(X, \sL^i)$ to be the section ring with respect to $\sL$.  Then $S$ is normal with an isolated singularity at the origin $S_+ = \oplus_{i \geq 1}  H^0(X, \sL^i)$.  For any divisor $D \geq 0$ on $X$, we may find $s \in H^0(X, \sL^j)$ such that $D \leq \Div(s)$ for some $j > 0$.  Therefore, when forming $P^0$, we may take $D$ only of the form $V_X(s)$ for $s$ as above.  Then it follows immediately that
\[
\tau(\omega_S) = \bigoplus_i P^0(X, \omega_X \tensor \sL^i)
\]
from the fact we may choose $s$ to be a test element.  We claim that for all $n \gg 0$,
\[
[\tau(\omega_S)]_n = [\omega_S]_n = H^0(X, \omega_X \tensor \sL^{n})
\]
where $[\blank]_n$ denotes the $n$th graded piece of a module.  But this is trivial since $\tau(\omega_S) = \omega_S$ away from the origin since that is where the singularity is located.

In fact, one sees that requiring that $X$ is smooth is more than required.  It is more than sufficient to merely require that $X$ has $F$-rational singularities (which just means that $\tau(\omega_X) = \omega_X$ and that $X$ is Cohen-Macaulay \cite[Section 8.1]{SchwedeTuckerTestIdealSurvey}).
\end{proof}

\begin{remark}
We believe (but do not verify herein) that more general equalities hold as well for $\tau(X, \Delta, \ba^n) \tensor \O_X(nH)$ for sufficiently ample $H$ (\textit{e.g.} when $\pi^* H - G$ is ample).
\end{remark}

Similar subspaces of sections to $P^0$ have also recently appeared, at least in case we take $\ba = \O_{X}$.  We conclude this section with some comparisons of these subspaces.

\begin{definition}
\label{def.S0Definition}
Suppose that $\Delta \geq 0$ is such that $K_X + \Delta$ is $\bQ$-Cartier with index not divisible by $p$ (in particular $(p^{e_1} - 1)(K_X + \Delta)$ is Cartier for some $e_1$).  Then for any line bundle $L$ on $X$ and any $e > 0$ such that $e_1 | e$, we have a map
$F^e_* \O_X(K_X - (p^e -1) \Delta) \to \O_X(K_X)$
and thus obtain (no roundings are necessary)
\[
\Tr^e = \Tr_{F^e} : F^e_* \big(\O_X((1-p^e)(K_X + \Delta)) \tensor L^{p^e}\big) \to L.
\]
Since the map $\Tr^{(n+1)e_1}$ factors through $\Tr^{ne}$ for any integer $sn$, the images on global sections stabilize for $e = ne_1 \gg 0$, and
we define
\[
S^0(X, \sigma(X, \Delta) \tensor L) = \Tr_{F^e}\big(H^0(X, F^e_* \O_X( (1-p^e)(K_X + \Delta)) \tensor L^{p^e} )\big) \subseteq H^0(X, L)
\]
for this stable image when $e = ne_1 \gg 0$.
\end{definition}

\begin{proposition}[$P^0$ versus $S^0$]
\label{prop.P0vsS0}
With notation as above, we have the containment
\[
P^0(X, \tau(X, \Delta) \tensor \O_X(L) ) \subseteq S^0(X, \sigma(X, \Delta) \tensor \O_X(L) ).
\]
Additionally, fix a Cartier divisor $D$ as in \autoref{claim.TauDescriptionViaBlowup} and also stabilizing $P^0(X, \tau(X, \Delta) \tensor L)$.  Then for all rational $\varepsilon > 0$ without $p$ in its denominator, we have
\[
S^0(X, \sigma(X, \Delta + \varepsilon D) \tensor \O_X(L) ) \subseteq P^0(X, \tau(X, \Delta) \tensor \O_X(L)).
\]
\end{proposition}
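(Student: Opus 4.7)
The plan for the first containment, $P^0(X, \tau(X, \Delta) \tensor \O_X(L)) \subseteq S^0(X, \sigma(X, \Delta) \tensor \O_X(L))$, is to factor the Frobenius-blowup trace through $X$ and use the pushforward inclusion from \autoref{claim.TauDescriptionViaBlowup}. Given $s \in P^0$, by \autoref{lem.StabilizingImageForP0} I can write $s = \Tr_{\pi \circ F^e}(\tilde{t})$ for some $\tilde{t} \in H^0(Y, F^e_* \O_Y(\lceil K_Y - p^e \pi^*(K_X + \Delta - L) - D \rceil))$ with $e$ large and $D$ a stabilizing divisor on $Y$. Writing $\Tr_{\pi \circ F^e} = \Tr_{F^e} \circ F^e_* \Tr_\pi$ and applying \autoref{claim.TauDescriptionViaBlowup} (instantiated with $\ba = \O_X$, $t = 0$), the section $F^e_* \Tr_\pi(\tilde{t})$ lies in $H^0(X, F^e_* \O_X((1-p^e)(K_X+\Delta)) \tensor L^{p^e})$, which is precisely the space whose $\Tr_{F^e}$-image defines $S^0(X, \sigma(X, \Delta) \tensor \O_X(L))$. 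Taking $\Tr_{F^e}$ then yields the desired containment.

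For the second containment, I would start with $s = \Tr_{F^e}(\tilde{s}) \in S^0(X, \sigma(X, \Delta + \varepsilon D) \tensor \O_X(L))$ coming from a section $\tilde{s} \in H^0(X, F^e_* \O_X((1-p^e)(K_X + \Delta + \varepsilon D)) \tensor L^{p^e})$. Choose $e$ sufficiently large and divisible so that $(p^e - 1)\varepsilon$ is an integer at least $1$; effectivity of $D$ then yields $\tilde{s} \in H^0(X, F^e_* \O_X((1-p^e)(K_X + \Delta) - D) \tensor L^{p^e})$. Via the identification $H^0(X, \mathcal{F}) \cong H^0(Y, \pi^* \mathcal{F})$ for line bundles $\mathcal{F}$ on $X$ (which uses $\pi_* \O_Y = \O_X$), I would interpret $\tilde{s}$ as a section on $Y$, which I expect to lie in $H^0(Y, F^e_* \O_Y(\lceil K_Y - p^e \pi^*(K_X + \Delta - L) - D \rceil))$ provided the appropriate divisor inequality holds on $Y$. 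Applying $\Tr_{\pi \circ F^e}$ and using $\Tr_{\pi \circ F^e}(\pi^* \tilde{s}) = \Tr_{F^e}(\tilde{s}) = s$ (by compatibility of traces with pullback together with $\pi_* \O_Y = \O_X$) then places $s$ in $P^0(X, \tau(X, \Delta) \tensor \O_X(L))$.

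The hard part will be verifying the divisor inequality underpinning the second step. After pulling back, one needs $\pi^*((1-p^e)(K_X + \Delta) - D + p^e L) \leq \lceil K_Y - p^e \pi^*(K_X + \Delta - L) - D \rceil$ on $Y$, which (for $e$ divisible enough to eliminate the ceiling) reduces to the requirement that $\pi^* D + K_Y \geq \pi^*(K_X + \Delta)$, i.e., that $\pi^* D$ dominates the discrepancy $\pi^*(K_X + \Delta) - K_Y$. This is precisely what the choice of $D$ as in \autoref{claim.TauDescriptionViaBlowup} is designed to arrange: $D$ is selected large enough, compatibly with the exceptional structure of $\pi$, to absorb both $K_Y$ and the non-$\bQ$-Cartier behavior of $K_X + \Delta$. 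The use of the $\varepsilon D$ perturbation in $S^0(X, \sigma(X, \Delta + \varepsilon D) \tensor \O_X(L))$, coupled with $(p^e - 1)\varepsilon \geq 1$ for $e$ large, supplies exactly the extra $-D$ factor needed to realize this inequality on $Y$; without such a perturbation the argument would fail outside the sub-log-canonical range.
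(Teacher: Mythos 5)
There is a genuine gap in your argument for the first containment, and it is precisely the point the paper identifies as the only non-obvious step. Recall that $S^0(X, \sigma(X,\Delta)\tensor \O_X(L))$ is defined only using $e = ne_1$ with $e_1 \mid e$, so that $(1-p^e)(K_X+\Delta)$ is an integral Cartier divisor; for $e$ not a multiple of $e_1$, the expression $\O_X((1-p^e)(K_X+\Delta))\tensor L^{p^e}$ is not a line bundle and there is no corresponding summand in $S^0$. Meanwhile, $P^0$ sums over \emph{all} $e \gg 0$. Your sentence asserting that $F^e_*\Tr_\pi(\tilde t)$ lies in $H^0(X, F^e_*\O_X((1-p^e)(K_X+\Delta))\tensor L^{p^e})$ and that this is ``precisely the space whose $\Tr_{F^e}$-image defines $S^0$'' therefore does not make sense for $e$ outside $e_1 \bZ$. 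The paper resolves this by factoring $\Tr^e = \Tr^{ne_1} \circ F^{ne_1}_*\Tr^{e-ne_1}$ and verifying, via the inequality $(p^{ne_1}-1)p^{e-ne_1} \leq p^e$, that the image of the inner trace lands in $\O_X(K_X - p^{ne_1}K_X - (p^{ne_1}-1)\Delta)$, after which $\Tr^{ne_1}$ produces a genuine $S^0$-element. That reduction step is exactly what your proof is missing. A secondary point: since $\ba = \O_X$ the normalized blowup is the identity, so the detour through $\pi : Y \to X$ and Claim~\ref{claim.TauDescriptionViaBlowup} adds nothing; the paper simply works with $\pi = \id_X$.

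On the second containment (which the paper leaves as ``straightforward''), your outline is close but the reduction ``effectivity of $D$ yields $\tilde s \in H^0(X, F^e_*\O_X((1-p^e)(K_X+\Delta) - D)\tensor L^{p^e})$'' discards too much of the $\varepsilon D$ perturbation. After that step, the required containment into the $P^0$-summand $\O_X(\lceil K_X - p^e(K_X+\Delta) \rceil + p^e L - D)$ comes down to $\lfloor p^e\Delta\rfloor \leq (p^e - 1)\Delta$, which fails whenever $\Delta$ has a fractional coefficient close to $1$. You should instead retain the full $-(p^e-1)\varepsilon D$ from $S^0(X, \sigma(X,\Delta+\varepsilon D)\tensor\O_X(L))$; then the inequality to check is $\lfloor p^e\Delta\rfloor - (p^e-1)\Delta \leq ((p^e-1)\varepsilon - 1)D$, and since the left side is bounded by $\Delta$ while $(p^e-1)\varepsilon - 1 \to \infty$, this holds for $e \gg 0$ once $D$ dominates $\Supp\Delta$ (which one can arrange from the construction in Claim~\ref{claim.TauDescriptionViaBlowup}). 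Your summary inequality ``$\pi^*D + K_Y \geq \pi^*(K_X+\Delta)$'' reflects roughly the right shape but does not capture where the $\varepsilon D$ is actually needed.
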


\begin{proof}
The first containment is obvious except for the fact that we are only summing over all $e \gg 0$ in $P^0$ and we are summing over $ne_1 \gg 0$ to compute $S^0$.  We explain how to overcome this difficulty.  Note that there is nothing to blow up.  We fix $D$ computing $P^0$ for $\pi = \id_X$. 
Suppose that $n e_1 \gg 0$ can be used to stabilized $S^0$.  Chose $e > n e_1$ 
and note that $(p^e - 1)(K_X + \Delta)$ is not necessarily integral. 

It then follows from the fact that $(p^{ne_1} - 1)p^{e - ne_1} \leq p^e$ that
\[
\begin{array}{rl}
& \Tr_{F^{e - n e_1}}\Big(F^{e - ne_1}_* \O_X(\lceil K_X - p^e(K_X + \Delta) - D \rceil) \Big)\\
= & \Tr_{F^{e - n e_1}}\Big(F^{e - ne_1}_* \O_X(K_X - p^e K_X - \lfloor p^e \Delta + D \rfloor) \Big)\\
\subseteq & \Tr_{F^{e - n e_1}}\Big(F^{e - ne_1}_* \O_X(K_X - p^{e - ne_1} p^{ne_1} K_X - (p^{ne_1} - 1)p^{e - ne_1} \Delta ) \Big)\\
\subseteq & \O_X\big(K_X - p^{ne_1} K_X - (p^{ne_1} - 1) \Delta \big)
\end{array}
\]
and the first containment follows by applying $\Tr_{F^{ne_1}}(F^{ne_1}_* \blank)$.
The second containment is straightforward to check directly.
\end{proof}

\begin{remark}
In many cases $S^0(X, \sigma(X, \Delta + \varepsilon D) \tensor \O_X(L) ) \neq 0$, \cf \cite{SchwedeACanonicalLinearSystem,MustataNonNefLocusPositiveChar,MustataSchwedeSeshadri,CasciniHaconMustataSchwedeNumDimPseudoEffective}.  In particular, if $D$ is as in the statement of \autoref{prop.P0vsS0} and $S^0(X, \sigma(X, \Delta + \varepsilon D) \tensor \O_X(L) )$ is non-vanishing for some $\varepsilon > 0$, then $P^0(X, \tau(X, \Delta) \tensor \O_X(L))$ is also non-vanishing.  See also Section 9 below for a number of non-vanishing statements for $P^{0}$ on curves.
\end{remark}

Another subspace of sections $T^0$ was also introduced in \cite{BlickleSchwedeTuckerTestAlterations}.  This variant roughly coincides with $S^0$ except the intersection is taken over all finite covers (rather than simply iterates of Frobenius), and we refer to \textit{loc. cit.} for the precise formulation.  It would seem natural to ponder the relationship of $T_{0}$ and $P_{0}$ as well.
\begin{question}
Is it true that $T^0(X, \tau(X, \Delta) \tensor \O_X(L)) = P^0(X, \tau(X, \Delta) \tensor \O_X(L))$ for all divisors $L$?  What about for ample $L$?
\end{question}
\noindent
This question is closely related to the question of whether splinters are strongly $F$-regular and also to the comparison of weak versus strong $F$-regularity, two of the main open problems in the theory of tight closure, \cf \cite{SinghQGorensteinSplinters,LyubeznikSmithStrongWeakFregularityEquivalentforGraded,LyubeznikSmithCommutationOfTestIdealWithLocalization}.


\section{Global division theorem: general case}

In this section, we show the last of the main results of this paper mentioned in the introduction: the global division theorem for $P^0$.


\begin{theorem}[Global division theorem]
\label{thm.MainResult}
Suppose that $X$ is a normal projective $n$-dimensional variety over an algebraically closed field of characteristic $p > 0$, and that $\Delta \geq 0$ is a $\bQ$-divisor such that $K_X + \Delta$ is $\bQ$-Cartier.  Suppose that $\ba \subseteq \O_X$ is an ideal sheaf and $L$ is a Cartier divisor such that $\O_X(L) \tensor \bc$ is globally generated by sections $s_1, \dots, s_r \in \Gamma(X, \O_X(L) \tensor \bc)$ for some reduction $\bc$ of $\ba$ (by replacing the $s_i$ with general linear combinations, we may always assume that $r \leq n+1$).    Fix $M$ a Cartier divisor on $X$ such that $M - K_X - \Delta$ is nef and big, and a positive integer $m \geq r$.  Then any section
\[
s \in P^0\Big( X, \O_X(M + m L) \tensor \tau(X, \Delta, \ba^m) \Big)
\]
can be expressed as a linear combination
\[
s = \sum h_i s_i
\]
with $h_i \in P^0\Big( X, \O_X(M + (m-1)L) \tensor \tau(X, \Delta, \ba^{m-1}) \Big)$.
\end{theorem}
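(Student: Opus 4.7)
The plan is to mimic closely the proof of the simplified form \autoref{thm.BasicMainTheorem}, with two main adaptations: (i) the definition of $P^0$ must be read against the more general sheaf $\sL^{X,\Delta,\ba^t}_{e,\pi,D}$ from \autoref{def.P0Definition}, and (ii) the ample divisor $A$ in the smooth case is now replaced by the merely nef and big divisor $M - K_X - \Delta$, which is the key source of new technical difficulty.

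First, I would fix a proper birational $\pi \: Y \to X$ with $Y$ normal, dominating the normalized blowup of $\ba$, so that $\ba \cdot \O_Y = \O_Y(-G)$. The pulled-back sections $\pi^* s_1, \ldots, \pi^* s_r$ globally generate $\O_Y(\pi^* L - G)$, and their $p^e$-th powers globally generate $\O_Y(p^e(\pi^* L - G))$. This yields the usual Koszul resolution $0 \to \sF_r \to \cdots \to \sF_1 \to \sF_0 \to 0$ with $\sF_i = \O_Y(-i p^e(\pi^*L - G))^{\oplus \binom{r}{i}}$, which is exact since $\sF_0 = \O_Y$. Tensoring by the line bundle
\[
\O_Y(\Lambda_m) = \O_Y\big(\lceil K_Y + p^e \pi^*(M - K_X - \Delta) + m p^e \pi^* L - m p^e G - D \rceil\big)
\]
for a suitable effective Cartier divisor $D$ on $Y$ turns the $i$-th term of the complex into $\sG_i = \O_Y(\Lambda_{m-i})^{\oplus \binom{r}{i}}$, where the $\Lambda_j$ are the corresponding sheaves appearing in the definition of $P^0$ with twist $M + jL$.

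Next, I would establish the two vanishings
\begin{itemize}
\item[(a)] $R^i \pi_* \O_Y(\Lambda_{m-j}) = 0$ for all $i > 0$ and $j = 0, \ldots, m$;
\item[(b)] $H^i(Y, \O_Y(\Lambda_{m-j})) = 0$ for all $i > 0$ and $j = 0, \ldots, m$,
\end{itemize}
for $e$ and $D$ sufficiently large. Statement (a) follows directly from Keeler's relative Fujita vanishing \cite{KeelerAmpleFiltersOfInvertibleSheaves} once $D$ is chosen so that $-D$ is $\pi$-ample, because then $\Lambda_{m-j} - K_Y$ is a $\pi$-ample divisor ($-D$) plus $\pi$-nef terms (the pullback $p^e \pi^*(M-K_X-\Delta)$ is $\pi$-trivial, and $(m-j)p^e(\pi^*L - G)$ is $\pi$-nef). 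Statement (b) is the main obstacle: unlike in \autoref{thm.BasicMainTheorem}, $M - K_X - \Delta$ is only nef and big, so $p^e \pi^*(M - K_X - \Delta) - D$ is not ample on $Y$. The plan to overcome this is to combine (a) with the global version of Fujita's theorem on $X$. Once the relative vanishing holds, the Leray spectral sequence gives $H^i(Y, \O_Y(\Lambda_{m-j})) = H^i(X, \pi_* \O_Y(\Lambda_{m-j}))$, and the projection formula lets us write $\pi_* \O_Y(\Lambda_{m-j}) = \sH_e \otimes \O_X((m-j)p^e L + \lfloor p^e(M - K_X - \Delta) \rfloor)$ (up to accounting for the rounding of $p^e \pi^*(M - K_X - \Delta)$), where $\sH_e$ is a coherent sheaf on $X$ whose dependence on $e$ is controlled by a finite set of integral divisors (as in the proof of \autoref{prop.RelativeVanishingForTau}). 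Applying Keeler's Fujita vanishing on $X$ to the nef and big divisor $M - K_X - \Delta$ — and using that $(m - j)p^e L$ contributes a further nef perturbation once we view $L$ via $(\pi^*L - G) + G$ and incorporate the $G$ contribution into the pushforward — then yields (b) for $e \gg 0$.

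Finally, given these vanishings, the Koszul complex remains exact after applying $F^e_* \pi_*$, and then applying the trace $\Tr_{\pi \circ F^e}$ and taking global sections preserves exactness on the right (since it preserves surjectivity of the last map). This produces a surjection
\[
\Tr_{\pi \circ F^e}\!\big(H^0(Y, F^e_* \O_Y(\Lambda_{m-1}))\big)^{\oplus r}
\xrightarrow{(s_1, \ldots, s_r)}
\Tr_{\pi \circ F^e}\!\big(H^0(Y, F^e_* \O_Y(\Lambda_m))\big).
\]
Summing over $e \geq e_0$ and intersecting over the allowable $\pi$ and $D$ using the stabilization of \autoref{lem.StabilizingImageForP0}, the right-hand side computes $P^0(X, \O_X(M + mL) \tensor \tau(X, \Delta, \ba^m))$ and each summand on the left computes $P^0(X, \O_X(M + (m-1)L) \tensor \tau(X, \Delta, \ba^{m-1}))$, so any $s$ on the right is of the form $\sum h_i s_i$ with $h_i$ in the appropriate $P^0$. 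The single subtle point is that the intersection defining $P^0$ requires us to verify the identity for every sufficiently large $D$; this is handled by choosing $D$ large enough once and for all to realize both sides of the stabilization (possible since both are intersections of descending images in finite-dimensional vector spaces) before running the Koszul argument.
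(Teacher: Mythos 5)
Your overall framework is right (Koszul on a birational model dominating the blowup, relative plus global vanishing, trace to images), but the argument for the \emph{global} vanishing (your item (b)) has a genuine gap that the paper's proof is specifically built to avoid.

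The problem is the projection-formula step. You write $\pi_* \O_Y(\Lambda_{m-j}) = \sH_e \otimes \O_X((m-j)p^e L + \lfloor p^e(M - K_X - \Delta) \rfloor)$ and claim $\sH_e$ is controlled by a finite set of integral divisors (as in \autoref{prop.RelativeVanishingForTau}). That claim is false: to split off a Cartier pullback you must leave the term $-(m-j)p^e G$ in $\sH_e$, since $G$ is not pulled back from $X$, so $\sH_e$ contains (a twist of) $\pi_*\O_Y(-(m-j)p^e G) = \overline{\ba^{(m-j)p^e}}$, which grows unboundedly with $e$. Fujita's vanishing theorem requires a \emph{fixed} coherent sheaf (with a uniform $m_0$), so you cannot directly apply it to a family of sheaves that varies with $e$ in this way. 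A second, smaller issue: you propose to "apply Fujita on $X$ to the nef and big divisor $M - K_X - \Delta$," but Fujita's theorem is stated for ample divisors with nef perturbations; to use it you would still need to decompose $n(M - K_X - \Delta)$ as an ample divisor plus an effective divisor and then track that effective part carefully. Your sketch never performs this decomposition.

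The paper handles both difficulties at once by working entirely on $Y$ and never invoking Leray. It writes $M_1 = n(M - K_X - \Delta)$ as $(M_1 - D_1) + D_1$ with $M_1 - D_1$ ample, and then constructs a single effective divisor $D'$ on $Y$ that is simultaneously $\pi$-antiample \emph{and} satisfies that $lm\,\pi^*M_1 - D'$ is ample on $Y$ for appropriate $l,m$ (absorbing $D_1$, the $\pi$-antiample $D_2$, and the stabilizing divisor $E$). Setting $D = kD'$ and decomposing $p^e \pi^*(M-K_X-\Delta)$ via $p^e = \lfloor p^e/n\rfloor n + r_e$, one writes $\Lambda_{m-j} - K_Y$ as a bounded term (the rounding $\lceil (r_e/n)\pi^*M_1\rceil$, finitely many values) plus a nef term plus a genuinely ample term $k(lm\,\pi^*M_1 - D')$. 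Then Fujita vanishing applies directly on $Y$, giving both the relative and the global vanishings in one stroke. This eliminates the need to control a varying family of pushforward sheaves. To repair your argument you would either need to import this divisor construction, or else invoke something substantially stronger than Fujita (e.g.\ uniform vanishing for ample filters tied to the Rees algebra of $\ba$), which is not in your sketch.
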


\begin{remark}
Note that, in the case that $\ba = \O_X$ and that $L$ is a globally generated ample divisor (by $r = n+1$ sections), Theorem~\ref{thm.MainResult} implies that the subset of global sections $P^0\Big( X, \O_X(M + m L) \tensor \tau(X, \Delta) \Big)$ globally generates $\O_X(M + m L) \tensor \tau(X, \Delta)$ for $m \geq n$, \cf \cite{KeelerFujita,SchwedeACanonicalLinearSystem,MustataNonNefLocusPositiveChar}.  To see this, note that a mild generalization of \autoref{lem.PNotIsH0Smooth} implies that $\O_X(M + m L) \tensor \tau(X, \Delta)$ is globally generated for $m \gg 0$.  Then one can repeatedly use \autoref{thm.MainResult} to prove it for $m-1,m-2,\ldots$ and so on (until reaching $n$ itself).
\end{remark}

\begin{proof}
Fix $\pi \: Y \to X$ to be the normalized blowup of $X$ along $\ba$, so that $\ba \O_{Y} = \O_{Y}(-G)$ for some effective Cartier divisor $G$ on $Y$.

First observe that as $\pi^{*}L - G$ is globally generated, it is certainly nef.  Also observe that since $M - K_X - \Delta$ is big and nef, there exists some divisor $D_1 \geq 0$ on $X$ and integer $n > 0$ such that $M_1 = n(M - K_X - \Delta)$ is Cartier and $M_1 - D_1 = n(M - K_X - \Delta) - D_1$ is ample.  Next, choose a $\pi$-antiample divisor $D_{2} \geq 0$ on $Y$ and integer $m > 0$ such that $m\pi^{*}(M_{1} - D_{1}) - D_{2}$ is ample.
Choose also a divisor $E \geq 0$ on $Y$ sufficiently large so as to stabilize the intersections \eqref{eq.StabilizeOnNormalBlowup} computing $P^0(X, \O_{X}(M + mL)\tensor \tau(X, \Delta, \ba^{m}))$ and $P^0(X, \O_{X}(M + (m-1)L)\tensor \tau(X, \Delta, \ba^{m-1}))$. Finally, we fix $l > 0$ large enough so that $l(m\pi^{*}(M_{1} - D_{1})-D_{2})-E$ is ample, and also that $-lm\pi^{*}D_{1}-lD_{2}-E$ is $\pi$-ample.

Put $D' = lm\pi^{*}D_{1} + l D_{2} + E$.  To briefly summarize what is needed from the above discussion, we have
\begin{itemize}
\item[(i)]  $\pi^{*}L - G$ is a nef Cartier divisor on $Y$,
\item[(ii)] $M_{1} = n(M - K_{X} - \Delta)$ is a nef Cartier divisor on $X$, and
\item[(iii)] $D'$ is an effective $\pi$-antiample divisor on $Y$ so that $lm \pi^{*}M_{1} - D'$ is ample.
\end{itemize}
For every $e > 0$, write $p^e = \lfloor p^e/n \rfloor n + r_e$ with $0 \leq r_e < n$.
We now choose $k$ sufficiently large so that for all $e \gg 0$ (whenever $p^e /n > lmk$) and all $i > 0$, we have the vanishing
\begin{equation}
\label{eq.GlobalDivFujitaGlobalVan}
{
\small
\begin{array}{rl}
& H^i\big(Y, \O_Y(\lceil K_{Y} + jp^{e} (\pi^{*}L - G) + p^{e} \pi^{*} (M - K_X - \Delta) - kD'\rceil)\big) \\
= & H^i\big(Y, \O_Y(\lceil K_{Y} + jp^{e} (\pi^{*}L - G) + (p^{e}/n) \pi^{*} M_1 - kD'\rceil)\big) \\
= & H^i\big(Y, \O_Y(\lceil K_{Y} + jp^{e} (\pi^{*}L - G) + (r_e/n) \pi^* M_1 + \lfloor p^{e}/n\rfloor \pi^{*} M_1 - kD'\rceil)\big) \\
= & H^i\big(Y, \O_Y(K_{Y} + jp^{e} (\pi^{*}L - G) + \lceil (r_e/n) \pi^* M_1 \rceil + (\lfloor p^{e}/n \rfloor  - lmk) \pi^{*} M_1 + k(lm \pi^*M_1 - D') )\big)\\
= & 0
\end{array}
}
\end{equation}
by Fujita's vanishing theorem \cite{FujitaVanishingTheoremsForSemiPositive}, where $j$ varies from $0$ to $r$.  Let us explain this briefly in slightly more detail.
Note that in the last line of \autoref{eq.GlobalDivFujitaGlobalVan}, the rounded term $\lceil (r_e/n) \pi^* M_1 \rceil$ takes on only finitely many values so Fujita's vanishing applies using the ampleness from (iii) above together with the nefness from (i) and (ii).
Similarly, by making $k$ larger if necessary, we can ensure for all $i > 0$
\begin{equation}
\label{eq.GlobalDivFujitaLocalVan}
\begin{array}{rl}
0 = & R^i \pi_* \O_Y(\lceil K_{Y} + jp^{e} (\pi^{*}L - G) + p^{e} \pi^{*} (M - K_X - \Delta) - k D'\rceil)
\end{array}
\end{equation}
by relative Fujita's vanishing \cite[Theorem 1.5]{KeelerAmpleFiltersOfInvertibleSheaves}, where again $j$ varies from $0$ to $r$.  Put $D = kD'$.

The sections $\pi^{*} s_{1}, \ldots, \pi^{*}s_{r}$ globally generate the sheaf $\O_{Y}(\pi^{*}L-G)$, hence we also know $(\pi^{*}s_{1})^{p^{e}}, \ldots, (\pi^{*}s_{r})^{p^{e}}$ globally generate $\O_{Y}(p^{e}(\pi^{*}L -G))$ for any $e > 0$. Form the corresponding Koszul complex
\[
0 \to \sF_{r} \to \sF_{r-1} \to \cdots \sF_{1} \to \sF_{0} \to 0
\]
where $\sF_{j} = \O_{Y}(-jp^{e}(\pi^{*}L-G))^{\oplus {r \choose j}}$ and each of the maps is (up to sign) essentially given as multiplication by the sections $(\pi^{*}s_{j})^{p^{e}}$.  Observe that this complex is locally exact by \cite[Theorem 1.6.5]{BrunsHerzog}, and hence is a locally free resolution of the (flat) sheaf $\sF_{0} = \O_{Y}$.  In particular, this complex remains exact after tensoring by any quasicoherent sheaf on $Y$.  Set
\[
\Lambda_{j} = \lceil K_{Y} + jp^{e} (\pi^{*}L - G) + p^{e} \pi^{*} (M - K_X - \Delta) - D\rceil
\]
for $j = 0, \ldots, m$. After we tensor the Koszul complex above by $\O_{Y}(\Lambda_{m})$, the $j$th entry in the complex becomes $\sG_{j} = \O_{Y}(\Lambda_{m-j})^{\oplus {r \choose j}}$.

Note that by equations \autoref{eq.GlobalDivFujitaLocalVan} and \autoref{eq.GlobalDivFujitaGlobalVan} for $e \gg 0$, we know that
\[
R^{i} \pi_{*} \O_{Y}(\Lambda_{m-j}) = 0
\qquad \text{and} \qquad
H^{i}(Y, \O_{Y}(\Lambda_{m-j})) = 0
\]
for all $j = 0, \ldots, m$ and all $i > 0$.
This implies that the complex
\[
0 \to F^{e}_{*}\pi_{*} \sG_{r} \to F^{e}_{*}\pi_{*} \sG_{r-1} \to \cdots \to F^{e}_{*}\pi_{*} \sG_{1} \to F^{e}_{*}\pi_{*} \sG_{0} \to 0
\]
is exact, and furthermore that it remains exact after taking global sections.  As taking the image under $\Tr^{e}$ preserves surjectivity, we have once more a surjective map
\begin{equation}
\label{eq:finalsurjection}
\Tr^{e}\left( H^{0}(X, F^{e}_{*}\pi_{*}\sG_{1})\right) \to[( \; s_{1} \; s_{2} \; \cdots \; s_{r} \; )] \Tr^{e}\left( H^{0}(X, F^{e}_{*}\pi_{*}\sG_{0})\right)
\end{equation}
where our notation on the left indicates that the trace map has been applied individually to each direct summand of
\[
H^{0}(X, F^{e}_{*}\pi_{*}\sG_{1}) = \left[ H^{0}(X,F^{e}_{*}\pi_{*} \O_{Y} (\Lambda_{m-1}))\right]^{\oplus r}.
\]
We then have that both
\[
\sum_{e \gg 0} \Tr^{e}\left( H^{0}(X, F^{e}_{*}\pi_{*}\sG_{1})\right) = \left[P^{0}(X, \O_{X}(M + (m-1)L) \tensor_{\O_{X}} \tau(X, \ba^{m-1}))\right]^{\oplus r}
\]
and
\[
\sum_{e \gg 0} \Tr^{e}\left( H^{0}(X, F^{e}_{*}\pi_{*}\sG_{0})\right) = P^{0}(X, \O_{X}(M + mL) \tensor_{\O_{X}} \tau(X, \ba^{m}))
\]
hold, and the desired conclusion now follows immediately from the surjectivity of \eqref{eq:finalsurjection}.
\end{proof}

\section{Computations of $P^0$ for curves}

To conclude this article, we analyze $P^0$ in the case of curves.  Throughout this section, $C$ is a connected smooth projective curve over an algebraically closed field of characteristic $p > 0$.  Our first result shows that $P^{0}$ is often non-zero.

\begin{lemma}
If $\sL$ is a line bundle on $C$ of degree $\geq 2$, then the linear system $|P^0(C, \omega_C \tensor \sL)| \subseteq |H^0(C, \omega_C \tensor \sL)|$ is base point free.  If $\deg \sL \geq 3$, then $|P^0(C, \omega_C \tensor \sL)|$ induces a closed embedding.  In particular, in either case, $P^0(C, \omega_C \tensor \sL) \neq 0$.
\end{lemma}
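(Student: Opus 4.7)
The plan is to use the stabilization of $P^0$ from \autoref{lem.StabilizingImageForP0} to reduce the problem to producing, for fixed sufficiently large $D$ and $e$, a single section $\sigma \in H^0(C, \omega_C(p^e L - D))$ whose Frobenius trace $\Tr_{F^e}(\sigma) \in H^0(C, \omega_C \tensor \sL)$ has prescribed non-vanishing or separation behavior at chosen points of $C$. Writing $\sL = \O_C(L)$ and taking the model $\pi = \id_C$ in the definition of $P^0$, the stabilization provides $D_0 \geq 0$ and $e_0 \geq 0$ such that $P^0(C, \omega_C \tensor \sL)$ is precisely the sum $\sum_{e \geq e_0} \Tr_{F^e}\bigl(H^0(C, F^e_* \omega_C(p^e L - D_0))\bigr)$, so any trace section constructed as above will automatically land in $P^0$.

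The central local ingredient will be a direct Cartier-operator computation. In a uniformizer $t$ at a point $P$ with $n = \mult_P(D_0)$ and $m = \mult_P(L)$, any section of $\omega_C(p^e L - D_0)$ has local form $\sigma = g(t)\, t^{n - p^e m}\, dt$ for some $g \in \O_{C,P}$. Using the projection formula $F^e_* \omega_C(p^e L - D_0) \cong F^e_* \omega_C(-D_0) \tensor \sL$ to factor the trace through a trivialization of $\sL$, combined with the standard formula $\Tr_{F^e}(t^j\, dt) = t^{(j - p^e + 1)/p^e}\, dt$ when $j \equiv p^e - 1 \pmod{p^e}$ and $0$ otherwise, together with $p^e$-semilinearity in constants over the perfect base field, I will check that the $0$-th and $1$st Taylor coefficients of $\Tr_{F^e}(\sigma)$ at $P$ (in the local generator $t^{-m}\, dt$ of $\omega_C(L)$) are $g_{p^e - 1 - n}^{1/p^e}$ and $g_{2p^e - 1 - n}^{1/p^e}$, respectively, with the obvious analog at any other point $Q$.

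It then suffices to ensure surjectivity of a suitable jet-evaluation map
\[
H^0\bigl(C, \omega_C(p^e L - D_0)\bigr) \twoheadrightarrow \omega_C(p^e L - D_0)/\omega_C(p^e L - D_0 - E)
\]
for $E$ an effective divisor supported on the prescribed points. By the long exact sequence this holds once $H^1(C, \omega_C(p^e L - D_0 - E)) = 0$, equivalently (by Serre duality) $\deg(\omega_C(p^e L - D_0 - E)) > 2g - 2$. For base point freeness at $P$ take $E = (p^e - n)P$; the inequality reduces to $p^e(\deg \sL - 1) > \deg D_0 - n$, attainable for $e \gg 0$ precisely when $\deg \sL \geq 2$. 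For separation of distinct points $P \neq Q$ take $E = (p^e - n_P)P + (p^e - n_Q)Q$, and for separation of tangent vectors at $P$ take $E = (2p^e - n_P)P$; in either case the inequality sharpens to $p^e(\deg \sL - 2) > \deg D_0 - n_P - n_Q$ or $\deg D_0 - n_P$, both attainable for $e \gg 0$ precisely when $\deg \sL \geq 3$. Choosing $\sigma$ whose local expansion has $g(t)$ equal to the appropriate monomial then produces, via the local trace formula, a section of $P^0$ with the desired non-vanishing or separation behavior.

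The main obstacle I anticipate is not conceptual but book-keeping: one must carefully track the projection-formula factorization of the trace, identify exactly which Taylor coefficients of $g(t)$ govern the value and derivative of $\Tr_{F^e}(\sigma)$ at each chosen point, and verify that the jet-evaluation surjectivity can be invoked at the same pair $(D_0, e_0)$ supplied by the stabilization lemma (allowing $e$ to grow while $D_0$ stays fixed). Once those are pinned down, the degree thresholds $\deg \sL \geq 2$ and $\deg \sL \geq 3$ emerge cleanly from the Serre-duality computation, and the non-vanishing statement $P^0(C, \omega_C \tensor \sL) \neq 0$ is an immediate corollary of the base point freeness already established.
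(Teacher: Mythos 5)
Your proof is correct, and at bottom it relies on the same two ingredients as the paper's: (a) surjectivity of a jet-evaluation map out of $H^0(C, \omega_C(p^eL - D))$ obtained from $H^1$-vanishing once $e \gg 0$, and (b) a local surjectivity statement for the trace $\Tr_{F^e}$ at the chosen point(s). The packaging, however, is genuinely different. The paper sets up a two-row commutative diagram of long exact sequences (trace as the vertical maps), kills the upper-right $H^1$ by Serre vanishing, and then dispatches (b) abstractly by observing that $F^e_*\omega_{C,Q}(-D) \to \omega_{C,Q}$ is surjective for $e\gg 0$ because a regular local ring is strongly $F$-regular; a short diagram chase then delivers base-point-freeness, with separation of points and tangents ``following similarly.'' You avoid the diagram entirely and instead unwind (b) explicitly via the Cartier-operator formula in a local uniformizer, identifying that the coefficients $g_{p^e-1-n}$ and $g_{2p^e-1-n}$ of the upstairs section govern the $0$-jet and $1$-jet of its trace; your degree inequalities then match the $E$'s needed to control exactly those coefficients. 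Your version is more computational but makes the mechanism transparent (one sees precisely which Taylor coefficients matter and where the thresholds $\deg\sL \geq 2$ and $\geq 3$ come from), and it is also more careful about one point the paper elides: you explicitly invoke the stabilization lemma to fix $D_0, e_0$ with $\pi = \id_C$ so that the trace sections you build are guaranteed to land in $P^0$ rather than merely in the image for one particular $D$. The paper's version is shorter and more abstract, at the cost of leaving those two points implicit.
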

\begin{proof}
For the first statement, suppose that $Q \in C$ is a point, fix an effective divisor $D$ on $C$.  Then writing $\sL = \O_C(L)$, we have the diagram of exact sequences for $e \gg 0$
\[
\scriptsize
\xymatrix@C=8pt{
H^0(C, F^e_* (\omega_C \tensor \O_C(p^e L - D))) \ar[r] \ar[d] &  H^0(Q, F^e_* (\omega_{p^e Q} \tensor \O_C(p^e L -D))) \ar[r] \ar[d] &  H^1(C, F^e_* (\omega_C \tensor \O_C(p^e (L -Q)-D))) \ar[d] \\
H^0(C, \omega_C \tensor \sL) \ar[r] & H^0(Q, \omega_Q \tensor \sL) \ar[r] & H^1(C, \omega_C \tensor \sL(-Q))
}
\]
where the vertical maps are induced by the trace.  Certainly the upper right term is zero by Serre vanishing since $e \gg 0$.  We need to prove that the middle vertical map is surjective (in other words, non-zero since $H^0(Q, \omega_Q \tensor \sL) = k$).  But consider now the maps on stalks:
\[
F^e_* \omega_{C,Q}(-D) \to \omega_{C,Q}
\]
induced by trace.  For $e \gg 0$, this is surjective since regular local rings are strongly $F$-regular \cite{HochsterHunekeTightClosureAndStrongFRegularity}.  But then the middle vertical map above is just obtained as a tensor product of the above map, and so it is surjective as well.  Thus we've obtained a non-vanishing section of $P^0(C, \omega_C \tensor \sL)$, as desired.

The remaining statement about $|P^0(C, \omega_C \tensor \sL)|$ inducing an embedding if $\deg \sL \geq 3$ follows similarly via separation of points and tangent vectors.
\end{proof}

Next, recall that H.~Tango proved in \cite[Theorem 15]{TangoOnTheBehaviorOfVBAndFrob} that $H^0(C, \omega_C \tensor \sL) = S^0(C, \omega_C \tensor \sL)$ for all $\sL$ of degree $\geq {2g - 2 \over p}$.
In the following proposition, we obtain a similarly explicit statement comparing $P^{0}$ and $H^{0}$ on curves, essentially giving an effective bound for $n \gg 0$ in \autoref{lem.PNotIsH0Smooth}.  This is an easy consequence of \cite[Lemma 3.2]{HaraDimensionTwo}

\begin{proposition}
\label{prop:tangoequiv}
If $\sL$ is a line bundle on  $C$ with $\deg \sL > {2g - 2 \over p}$, then
\[
\begin{array}{rl}
H^0(C, \omega_C \tensor \sL) = & P^0(C, \omega_C \tensor \sL) := P^0(C, \tau(C, -K_C) \tensor \sL).\\
\end{array}
\]
\end{proposition}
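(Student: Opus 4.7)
The plan is to reduce the equality $H^0 = P^0$ to the surjectivity of iterated trace maps on global sections, and then to invoke Hara's lemma.  First I observe that since $C$ is smooth, the parameter test module satisfies $\tau(C,-K_C) = \tau(\omega_C) = \omega_C$, and since $\ba = \O_C$ no blow-up is required.  Moreover, any proper birational map $\pi : Y \to C$ from a normal variety to the smooth projective curve $C$ is an isomorphism, so the outer intersection over $\pi$ in \autoref{def.P0Definition} may be restricted to $\pi = \id_C$.  Writing $\sL = \O_C(L)$ and specializing with $\Delta = -K_C$, $t=0$, $G_Y = 0$, the definition then reads
\[
P^0(C, \omega_C \tensor \sL) \;=\; \bigcap_{D \geq 0}\ \bigcap_{e_0 \geq 0}\ \sum_{e \geq e_0} \Tr_{F^e}\!\Big( H^0\bigl(C, F^e_*(\omega_C \tensor \sL^{p^e}(-D))\bigr) \Big),
\]
where $D$ runs over effective divisors on $C$, and this intersection is automatically contained in $H^0(C, \omega_C \tensor \sL)$.

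For the reverse containment it suffices to exhibit, for each effective divisor $D$, an $e_0 \geq 0$ such that for every $e \geq e_0$ the trace
\[
\Tr_{F^e} : H^0\bigl(C, F^e_*(\omega_C \tensor \sL^{p^e}(-D))\bigr) \longrightarrow H^0(C, \omega_C \tensor \sL)
\]
is \emph{surjective}; granting this, each individual term (and hence the entire intersection) coincides with $H^0(C, \omega_C \tensor \sL)$.  This surjectivity statement under the hypothesis $\deg \sL > (2g-2)/p$ is exactly the content of \cite[Lemma 3.2]{HaraDimensionTwo}, which extends Tango's classical $D=0$ case to arbitrary effective twisting divisors.

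The main (and essentially the only) substantive step is this appeal to Hara's lemma.  Via Serre duality, the surjectivity in question is equivalent to the injectivity of the dual map
\[
H^1(C, \sL^{-1}) \longrightarrow H^1(C, \sL^{-p^e}(D)),
\]
which factors as the Frobenius pullback $H^1(\sL^{-1}) \to H^1(\sL^{-p^e})$ followed by the inclusion-induced map $H^1(\sL^{-p^e}) \to H^1(\sL^{-p^e}(D))$.  The hypothesis $\deg \sL > (2g-2)/p$ is precisely what Tango's theorem needs for injectivity of the Frobenius pullback, and the content of Hara's lemma is that the coupling with the effective divisor $D$ does not destroy injectivity of the composite for $e \gg 0$.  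Once that input is imported, the reduction outlined above finishes the argument.
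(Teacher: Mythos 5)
Your proof is correct and follows essentially the same approach as the paper: both reduce $P^0 = H^0$ to surjectivity of iterated trace maps on global sections, dualize via Serre duality to an injectivity statement on $H^1$, and invoke Hara's Lemma 3.2 together with Tango's theorem. One small caution: the paper applies Hara's Lemma 3.2 to the \emph{single} Frobenius step $H^1(C,\sL^{-p^e}) \to H^1(C,F_*\sL^{-p^{e+1}}\otimes \O_C(D))$ for $e \gg 0$ and then composes with Tango's untwisted iterated surjection, so Tango is a genuinely separate input rather than (as your phrase ``exactly the content of Lemma 3.2'' suggests) a statement wholly subsumed by Hara's lemma --- though your later decomposition of the dual map makes clear you understand both ingredients are in play.
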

\begin{proof}
Fix any $D > 0$ on $C$.  For $e \gg 0$, the canonical map
\[
H^1(C, \sL^{-p^e} ) \to H^1(C, F_* \sL^{-p^{e+1}} \otimes \O_C(D))
\]
injects by \cite[Lemma 3.2]{HaraDimensionTwo}.  Pushing forward by $F^e_* $ and applying Serre duality we see that
\[
H^0(C, F^{e+1}_* \sL^{p^{e+1}} \otimes \O_C(-D)) \to H^0(C, F^e_* \sL^{p^e})
\]
surjects for any $e \gg 0$.  But $H^0(C, F^e_* \sL^{p^e}) \to H^0(C, \sL)$ surjects since $\deg \sL > {2 g - 2 \over p }$ by \cite{TangoOnTheBehaviorOfVBAndFrob}.  Combining these two surjections gives us our desired result.
\end{proof}

We immediately obtain the following corollary.

\begin{corollary}
Let $A$ be an ample divisor on $C$ and set $S = \bigoplus_{i \geq 0} H^0(C, \O_C(iA))$.  Then the $i$th graded components
\[
[\tau(\omega_S)]_i = [\omega_S]_i
\]
coincide for all $i$ such that $i \cdot \deg(A) > {2 g - 2 \over p}$.  In particular, taking $A = K_C$ if the genus of $C$ is at least two, we have $[\tau(\omega_S)]_i = [\omega_S]_i$ for all $i > 0$.
\end{corollary}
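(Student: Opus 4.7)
The plan is to combine the graded-module decomposition used in the proof of \autoref{lem.PNotIsH0Smooth} with the explicit pointwise equality \autoref{prop:tangoequiv}. First, I would repeat verbatim the argument in \autoref{lem.PNotIsH0Smooth}: since $S$ is the section ring with respect to the ample $\O_C(A)$, it is a normal two-dimensional graded domain with isolated singularity at the irrelevant maximal ideal $S_+$, and every effective divisor $D$ on $C$ is dominated by the vanishing locus of some homogeneous element $s \in H^0(C,\O_C(jA))$ for some $j>0$, which moreover may be taken to be a test element on $S$. As explained there, this yields the graded identification
\[
\tau(\omega_S) \;=\; \bigoplus_{i \in \bZ} P^0\big(C,\, \omega_C \tensor \O_C(iA)\big).
\]

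Second, I need to identify each graded piece of the ambient module. By local duality and the decomposition
\[
H^2_{\mathfrak{m}}(S) \;=\; \bigoplus_{i \in \bZ} H^1\big(C, \O_C(iA)\big),
\]
Serre duality on $C$ gives $[\omega_S]_i = H^0(C, \omega_C \tensor \O_C(iA))$ for every $i$. Therefore the claimed equality $[\tau(\omega_S)]_i = [\omega_S]_i$ is equivalent to
\[
P^0\big(C,\, \omega_C \tensor \O_C(iA)\big) \;=\; H^0\big(C,\, \omega_C \tensor \O_C(iA)\big),
\]
and this is precisely the content of \autoref{prop:tangoequiv} applied to the line bundle $\sL = \O_C(iA)$, whose degree $i \cdot \deg(A)$ is assumed to exceed $(2g-2)/p$.

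For the final assertion, take $A = K_C$ with $g \geq 2$, so $\deg A = 2g - 2 \geq 2 > 0$. The hypothesis becomes $i(2g-2) > (2g-2)/p$, which simplifies to $i > 1/p$, and hence holds for every integer $i \geq 1$. I do not anticipate any serious obstacle here: the only mild subtlety is verifying the graded identification of $\omega_S$ with the Serre-duality pieces (which I would handle via the standard $a$-invariant/local cohomology computation above), and once that is in place the corollary is immediate from the two results already established in the paper.
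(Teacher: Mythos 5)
Your proof is correct and follows essentially the same path as the paper's: identify $[\tau(\omega_S)]_i$ with $P^0(C,\omega_C\tensor\O_C(iA))$ and $[\omega_S]_i$ with $H^0(C,\omega_C\tensor\O_C(iA))$ via the section-ring argument from \autoref{lem.PNotIsH0Smooth}, then invoke \autoref{prop:tangoequiv}. You merely spell out the local/Serre duality identification of $[\omega_S]_i$ that the paper leaves implicit, which is a harmless elaboration.
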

\begin{proof}
As argued previously, we have $P^0(C, \omega_C \tensor \O_C(iA)) = [\tau(\omega_S)]_i$ and $H^0(C, \omega_C \tensor \O_C(iA)) = [\omega_S]_i$, and so the corollary follows immediately from \autoref{prop:tangoequiv}.
\end{proof}

\bibliographystyle{skalpha}
\bibliography{MainBib}
\end{document}